\DeclareMathOperator{\A}{A}
\DeclareMathOperator{\D}{D}
\DeclareMathOperator{\F}{F}
\DeclareMathOperator{\id}{1'}
\DeclareMathOperator{\pref}{\sqcup}
\DeclareMathOperator{\Ima}{Im}
\newcommand{\compo}{\mathbin{;}}
\newcommand{\compl}[1]{\overline{#1}}
\newcommand{\bmeet}{\cdot}
\newcommand{\defn}[1]{\textbf{#1}}
\newcommand{\algebra}[1]{\mathfrak{#1}}
\newcommand{\<}{\langle}
\newcommand\myeq[1]{\stackrel{\mathclap{\normalfont\mbox{#1}}}{=}}
\newcommand{\tie}{\mathbin{\bowtie}}
\newcommand{\limplies}{\;\rightarrow\;}
\renewcommand{\vec}[1]{\boldsymbol{#1}}
\renewcommand{\>}{\rangle}
\theoremstyle{plain}
\newtheorem{theorem}{Theorem}[section]
\newtheorem{proposition}[theorem]{Proposition}
\newtheorem{lemma}[theorem]{Lemma}
\newtheorem{corollary}[theorem]{Corollary}
\theoremstyle{definition}
\newtheorem{definition}[theorem]{Definition}
\newtheorem{example}[theorem]{Example}
\title[Multiplace Functions for Signatures Containing Antidomain]{Algebras of Multiplace Functions for Signatures Containing Antidomain}
\author{Brett McLean}
\address{Department of Computer Science, University College London, Gower Street, London WC1E 6BT}
\email{b.mclean@cs.ucl.ac.uk}
\thanks{The author would like to thank his PhD supervisor Robin Hirsch for many helpful discussions.}
\begin{document}

\begin{abstract}
We define antidomain operations for algebras of multiplace partial functions. For all signatures containing composition, the antidomain operations and any subset of intersection, preferential union and fixset, we give finite equational or quasiequational axiomatisations for the representation class. We do the same for the question of representability by injective multiplace partial functions. For all our representation theorems, it is an immediate corollary of our proof that the finite representation property holds for the representation class. We show that for a large set of signatures, the representation classes have equational theories that are coNP-complete.

\end{abstract}

\maketitle

\section{Introduction}

The scheme for investigating the abstract algebraic properties of functions takes the following form. First choose some sort of functions of interest, for example partial functions or injective functions. Second, specify some set-theoretically-defined operations possible on such functions, for example function composition or set intersection. Finally, study the isomorphism class of algebras that consist of some such functions together with the specified set-theoretic operations.

The study of algebras of so-called multiplace functions started with Menger \cite{algebraofanalysis}. Here the objects in the concrete algebras are (usually partial) functions from $X^n$ to $X$ for some fixed $X$ and $n$. Since then, representation theorems---axiomatisations of isomorphism classes via explicit representations---have been given for various cases \cite{Dicker01011963, multi, Tro73, 1018.20057}.

For unary functions, the antidomain operation yields the identity function restricted to the complement of a function's domain. This operation seems first to have been described in \cite{doi:10.1081/AGB-120039272}, where it is referred to as domain complement. Some recent work has been directed towards providing representation theorems in the case of unary functions for signatures including antidomain \cite{DBLP:journals/ijac/JacksonS11, hirsch}.

In this paper we define, for $n$-ary multiplace functions, $n$ indexed antidomain operations by simultaneous analogy with the indexed domain operations studied on multiplace functions and the antidomain operation studied on unary functions. This definition together with other fundamental definitions we need comprise \Cref{definitions}.

The majority of this paper, Sections \ref{section2}--\ref{section:fixset}, consists of representation theorems for multiplace functions for signatures containing composition and the antidomain operations. Much of this is a straightforward translation of \cite{DBLP:journals/ijac/JacksonS11}, where the same is done for unary functions.

In Sections \ref{section2} and \ref{section:representation} we work over the signature containing composition and the antidomain operations. We show that for multiplace partial functions the representation class cannot form a variety and we state and prove the correctness of a finite quasiequational axiomatisation of the class. It follows, as it does for our later representation theorems, that the representation class has the finite representation property.

In \Cref{section:injective} we use a single quasiequation to extend the axiomatisation of \Cref{section2} to a finite quasiequational axiomatisation for the case of injective multiplace partial functions.

In \Cref{section:intersection} we add intersection to our signature and for both partial multiplace functions and injective partial multiplace functions are able to give finite equational axiomatisations of the representation class.

In Sections \ref{section:preferential} and \ref{section:fixset} we consider all our previous representation questions with the preferential union and fixset operations, respectively, added to the signature. In all cases we give either finite equational or finite quasiequational axiomatisations of the representation class.

In \Cref{section:equational} we switch our focus to equational theories. We prove that for any signature containing operations that we mention, the equational theory of the representation class of multiplace partial functions lies in $\mathsf{coNP}$. If the signature contains the antidomain operations and either composition or intersection then the equational theory is $\mathsf{coNP}$-complete.

\section{Algebras of Multiplace Functions}\label{definitions}

In this section we give the fundamental definitions of algebras of multiplace functions and of the various operations that may be included.

Given an algebra $\algebra{A}$, when we write $a \in \algebra{A}$ or say that $a$ is an element of $\algebra{A}$, we mean that $a$ is an element of the domain of $\algebra{A}$. We follow the convention that algebras are always nonempty. We use $n$ to denote an arbitrary nonzero natural number. A bold symbol, ${\vec a}$ say, is either simply shorthand for $\<a_1, \ldots, a_n\>$ in a term of the form $\<a_1, \ldots, a_n\> \compo b$ or denotes an actual $n$-tuple $\<a_1, \ldots, a_n\>$. We may abuse notation, when convenient, by writing $(\vec x, y)$ for the $(n+1)$-tuple $\<x_1, \ldots, x_n, y\>$. If $\A_1, \ldots, \A_n$ are unary operation symbols, the notation $\<\A_1^n\!a\>$ is shorthand for $\<\A_1\!a, \ldots, \A_n\!a\>$. When a function $f$ acts on an $n$-tuple $\<a_1, \ldots, a_n\>$ we omit the angle brackets and write $f(a_1, \ldots, a_n)$. If $i$ is an index, then `for all $i$' or `for every $i$' means for all $i \in \{1, \ldots, n\}$.

First we make clear what we mean by a multiplace function.

\begin{definition}\label{multiplace}
An \defn{$n$-ary relation} is a subset of a set of the form $X_1 \times \ldots \times X_n$. Without loss of generality we may assume all the $X_i$'s are equal. In the context of a given value of $n$, a \defn{multiplace partial function} is an $(n+1)$-ary relation $f$ validating
\begin{equation}\label{function}
\<x_1, \ldots, x_n, y\> \in f \;\;\;\wedge\;\;\; \<x_1, \ldots, x_n, z\> \in f \;\;\;\limplies\;\;\; y = z\text{.}
\end{equation}
We may also use the terminology \defn{$n$-ary partial function} for the same concept. We import all the usual terminology for partial functions, for instance if $(\vec x, y) \in f$ then we may write $f(\vec x) = y$, say `$f(\vec x)$ is defined', and so on.
\end{definition}

Henceforth, we will use the epithet `$n$-ary' in favour of `multiplace' in order to make the arity of the functions in question explicit.

\begin{definition}\label{first}
Let $\sigma$ be an algebraic signature whose symbols are a subset of $\{\<\phantom{a}\> \compo, \bmeet, 0, \pi_i, \D_i, \A_i, \F_i, \tie_i, \pref \}$, where we write, for example, $\A_i$ to indicate that $\A_1, \ldots, \A_n \in \sigma$ for some fixed $n$. An \defn{algebra of $n$-ary partial functions} of the signature $\sigma$ is an algebra, $\algebra{A}$, of the signature $\sigma$ whose elements are $n$-ary partial functions and that has the following properties.
\begin{enumerate}[(i)]
\item\label{one}
There is a set $X$, the \defn{base}, and an equivalence relation $E$ on $X$ with the following property. For all $f \in \algebra{A}$ and all $\<x_1, \ldots, x_{n+1}\> \in f$, we have that $x_i E x_{i+1}$ for every $i$. That is, every partial function in the algebra contains only $(n+1)$-tuples of $E$-equivalent members of $X$.
\item
The operations are given by the set-theoretic operations on partial functions described in the following.
\end{enumerate}

In an algebra of $n$-ary partial functions
\begin{itemize}
\item
the $(n+1)$-ary operation $\<\phantom{a}\> \compo$ is \defn{composition}, given by:
\[ {\vec f} \compo g = \{({\vec x},z) \in X^{n+1} \mid \exists {\vec y} \in X^n : ({\vec x}, y_i) \in f_i\text{ for each }i\text{ and }({\vec y}, z) \in g\}\text{,}\]
\item
the binary operation $\bmeet$ is intersection:
\[f \bmeet g = \{({\vec x},y) \in X^{n+1} \mid ({\vec x}, y) \in f\text{ and }({\vec x}, y) \in g\}\text{,}\]
\item
the constant $0$ is the nowhere-defined function:
\[0 = \emptyset = \{({\vec x},y) \in X^{n+1} \mid \bot\}\text{,}\]
\item
for each $i$ the constant $\pi_i$ is the $i$th projection on the set of all $n$-tuples of $E$-equivalent points:
\[\pi_i = \{({\vec x}, x_i) \in X^{n+1} \mid x_1, \ldots, x_n \text{ all $E$-equivalent} \}\text{,}\]
\item
for each $i$ the unary operation $\D_i$ is the operation of taking the $i$th projection restricted to the domain of a function:
\[\D_i(f) = \{({\vec x}, x_i) \in X^{n+1} \mid \exists y \in X : ({\vec x}, y) \in f\}\text{,}\]
\item
for each $i$, the unary operation $\A_i$ is the operation of taking the $i$th projection restricted to the \defn{antidomain} of a function\textemdash those $n$-tuples of $E$-equivalent points where the function is not defined:
\[\A_i(f) = \{({\vec x}, x_i) \in X^{n+1} \mid x_1, \ldots, x_n \text{ all $E$-equivalent} \text{ and }\cancel{\exists} y \in X : ({\vec x}, y) \in f\}\text{,}\]
\item
for each $i$, the unary operation $\F_i$, the $i$th \defn{fixset} operation, is the $i$th projection function intersected with the function itself:
\[\F_i(f) = \{({\vec x}, x_i) \in X^{n+1} \mid ({\vec x}, x_i) \in f\}\text{,}\]
\item
for each $i$, the binary operation $\tie_i$, the $i$th \defn{tie} operation, is the $i$th projection function restricted to those $E$-equivalent $n$-tuples where the two arguments do not disagree, that is, either neither is defined or they are both defined and are equal:
\[f \tie_i g = \{({\vec x}, x_i) \in X^{n+1} \mid ({\vec x}, x_i) \in \A_i\!f \cap \A_i\!g \text{ or } \exists y \in X : ({\vec x}, y) \in f \cap g \}\text{,}\]
\item
the binary operation $\pref$ is \defn{preferential union}:
\[(f \pref g)({\vec x}) =
\begin{cases}
f({\vec x}) & \text{if }f({\vec x})\text{ defined}\\
g({\vec x}) & \text{if }f({\vec x})\text{ undefined, but }g({\vec x})\text{ defined}\\
\text{undefined} & \text{otherwise}
\end{cases}\]
\end{itemize}

If the equivalence relation $E$ is the universal relation, $X \times X$, then we say that the algebra is \defn{square}.
\end{definition}

\begin{definition}
Let $\algebra{A}$ be an algebra of one of the signatures specified by \Cref{first}. A \defn{representation of $\algebra{A}$ by $n$-ary partial functions} is an isomorphism from $\algebra{A}$ to an algebra of $n$-ary partial functions of the same signature. If $\algebra{A}$ has a representation then we say it is \defn{representable}.
\end{definition}

As we have signified, in this paper the focus is on isomorphs of algebras of $n$-ary partial functions in general, rather than the square ones in particular. However, now is an opportune moment for a brief discussion of the merits of each of these concepts and the relationship between them.

The square algebras of $n$-ary functions have the advantage of being the simpler and more natural concept. However for certain signatures they are not as algebraically well behaved, failing to be closed under direct products. Indeed there are simple examples of pairs of algebras that are each representable as square algebras of functions but whose product is not. The presence of the antidomain operations in the signature will always cause this problem, as the example we now give demonstrates.

\begin{example}
Assume $n \geq 2$ and work over any one of the signatures specified by \Cref{first} containing the $n$ indexed antidomain operations $\A_1, \ldots, \A_n$. Consider the two-element algebra $\algebra A$ consisting of both of the $n$-ary partial functions on some base of size one.  As $\algebra A$ \emph{is} a square algebra of  partial functions it is trivially representable as a square algebra of partial functions. We argue that $\algebra A \times \algebra A$ is \emph{not} representable as a square algebra of functions.

Suppose, for contradiction, that $\theta$ is a square representation of $\algebra A \times \algebra A$ with base $X$.  Since $|\algebra A \times \algebra A|= 4$, we know $X$ must contain at least two distinct points, in order that $\theta$ distinguishes all the elements of $\algebra A \times \algebra A$. Let $\vec x$ be any $n$-tuple from $X^n$ not lying on the diagonal. Denote the two elements of $\algebra A$ by $a$ and $b$. Then $\A_i\! a  = b$ and $\A_i\! b =  a$ for every $i$. So  $\A_1(a,a) = (b,b)$, and hence the domains of the partial functions $\theta(a, a)$ and $\theta(b,b)$ must partition $X^n$. Without loss of generality we may assume $\vec x$ is not in the domain of $\theta(a,a)$. But then $\theta(\A_i(a,a))(\vec x) = x_i$ for every $i$. As every $\theta(\A_i(a,a))$ is the same function, namely $\theta(b,b)$, all components of $\vec x$ are equal, contradicting the assumption that $\vec x$ is not on the diagonal. We conclude that $\algebra A \times \algebra A$ cannot be represented as a square algebra of partial functions.
\end{example}

An immediate consequence of not being closed under direct products is that the class of algebras having a square representation cannot be a quasivariety. We note however that these classes always possess  universal axiomatisations in first-order logic, for any of the signatures covered by \Cref{first}. This can be seen by appealing to Schein's fundamental theorem of relation algebra \cite{Schein1970}. There are two conditions of Schein's theorem that need to be checked. The first is that $n$-ary partial functions can be defined as those $(n+1)$-ary relations satisfying a recursive set of sentences in the  first-order language with equality and a countable supply of $(n+1)$-ary relation symbols, which is precisely what we did in \Cref{multiplace} by using \eqref{function}. The second is that, using the same first-order language, the operations we are considering can each be defined using a formula  with $n + 1$ free variables. Definitions of the operations for square algebras can be formed from the more general definitions we gave in \Cref{first} by removing any stipulations of $E$-equivalence. The resulting definitions are of the required form.

The purpose of relativising operations to $E$ in \Cref{first} is to ensure that the class of algebras representable by $n$-ary partial functions \emph{is} closed under direct products. A direct product of representable algebras can be represented using a `disjoint union' of representations of the factors.

\begin{definition}\label{def:disjoint}
Let $(\algebra{A}_i)_{i \in I}$ be a family of algebras all of the same signature and $(\theta_i : \algebra{A}_i \to \algebra{F}_i)_{i \in I}$ be a corresponding family of homomorphisms to algebras of $n$-ary partial functions, with $\algebra{F}_i$ having base $X_i$ and equivalence relation $E_i$ on $X_i$.

A \defn{disjoint union} of $(\theta_i)_{i \in I}$ is any homomorphism $\theta$ out of $\prod_{i \in I} \algebra{A}_i$ formed by the following process. First rename the elements of the $X_i$'s in such a way that the $X_i$'s are pairwise disjoint. Then the codomain of $\theta$ will be an algebra $\algebra{F}$ consisting of all $n$-ary partial functions of the form $\bigcup_{i \in I} \theta_i(a_i)$ for some element $(a_i)_{i \in I}$. The base of $\algebra{F}$ will be $X \coloneqq \bigcup_{i \in I}X_i$ and the equivalence relation on $X$ will be $E \coloneqq \bigcup_{i \in I} E_i$. The operations on $\algebra{F}$ will be given by the concrete operations described in \Cref{first}. Define $\theta((a_i)_{i \in I}) = \bigcup_{i \in I} \theta_i(a_i)$ for each element $(a_i)_{i \in I}$ of $\prod_{i \in I} \algebra{A}_i$. The map $\theta$ is straightforwardly a homomorphism.
\end{definition}


A disjoint union of injective homomorphisms will be injective and that is why we remarked that a product of representable algebras can be represented by a disjoint union of representations of the factors. If our definition of algebras of $n$-ary partial functions were restricted to square algebras only, then we could not guarantee that a disjoint union of representations would be a representation, since the disjoint union of two universal equivalence relations is not universal.

Our final remark about square algebras of partial functions is that it is easily seen that every algebra representable by $n$-ary partial functions is a subalgebra of a product of algebras each having a square representation. Hence the general representation class is contained in the quasivariety generated by the square representation class.

\medskip

For algebras of $n$-ary functions, the first representation theorem was provided by Dicker in \cite{Dicker01011963}, showing that the equation that has come to be known as the superassociativity law axiomatises the representation class (for total functions, although the equation is valid for partial functions) in the signature consisting only of composition. Trokhimenko gave equational axiomatisations for the signatures of composition and intersection, in \cite{multi}, and composition and domain, in \cite{Tro73}. In \cite{1018.20057}, Dudek and Trokhimenko gave a finite equational axiomatisation for the signature of composition, intersection and domain.

The subject of this paper is signatures containing composition and antidomain. Note that $0, \pi_i$ and $\D_i$ and are all definable using composition and antidomain, using $0 \coloneqq \<\A_1^n\!a\> \compo a$, for any $a$, and then $\pi_i \coloneqq \A_i\!0$ and using $\D_i \coloneqq \A_i^2$ (that is, a double application of $\A_i$). Further, in the presence of composition and antidomain, the tie operations and intersection are interdefinable. The tie operations are definable as $a \tie_i b \coloneqq \D_i(a \bmeet b) +_i \<\A_1^n\!a\> \compo \A_i\!b$, where $\alpha +_i \beta \coloneqq \A_i(\<\A_1^n\!\alpha\> \compo \A_i\!\beta)$. Intersection is definable as $a \bmeet b \coloneqq \<a \tie_1^n b\> \compo a$. This leaves $\bmeet$, $\F_i$ and $\pref$ as the only interesting additional operations among those we have mentioned. When intersection is present, the fixset operations are definable as $\F_i\!f \coloneqq \pi_i \bmeet f$.

We include here, for ease of reference, a summary of the results about representation classes contained in this paper. All classes have finite axiomatisations of the relevant form.

\begin{table}[H]
\begin{tabular}{|l|l|l|}
\hline
{\bf Signature} & {\bf Partial functions} & {\bf Injective partial functions}\\
\hline
$\<\phantom{a}\>\compo, \A_i$& proper quasivariety & quasivariety\\
\hline
$\<\phantom{a}\>\compo, \A_i, \bmeet$& variety & variety\\
\hline
$\<\phantom{a}\>\compo, \A_i, \pref$& variety & quasivariety\\
\hline
$\<\phantom{a}\>\compo, \A_i, \bmeet, \pref$& variety & variety\\
\hline
$\<\phantom{a}\>\compo, \A_i, \F_i$& quasivariety & quasivariety\\
\hline
$\<\phantom{a}\>\compo, \A_i, \F_i, \pref$& quasivariety & quasivariety\\
\hline
\end{tabular}
\caption{Summary of representation classes for $n$-ary functions}
\end{table}

Note that whenever a representation class has a finite quasiequational axiomatisation the decision problem of representability of finite algebras is solvable in polynomial time, and if we know such an axiomatisation then we know such an algorithm. We observed earlier that, for each signature, the representation class is contained in the quasivariety generated by the algebras having square representations. Hence another point to note is  that our results identify the representation classes as equal to these generated quasivarieties.

Beyond representability, we may also be interested in representability on a finite base. Our final fundamental definition can be invoked in any circumstance where there is a notion of representability.

\begin{definition}\label{def:frp}
The \defn{finite representation property} holds if any finite representable algebra is representable on a finite base.
\end{definition}

\section{Composition and Antidomain}\label{section2}

First we examine the signature $(\<\phantom{a}\>\compo, \A_1, \ldots, \A_n)$ consisting of composition and the antidomain operations. After presenting some equations and one quasiequation that are valid for algebras of $n$-ary partial functions, we deduce some consequences of these (quasi)equations that we use in \Cref{section:representation} to prove that our (quasi)equations axiomatise the representation class.

In \cite{DBLP:journals/ijac/JacksonS11}, Jackson and Stokes give a finite quasiequational axiomatisation of the representation class of unary partial functions for the signature of composition, antidomain.\footnote{Actually, their signature also contains the constants $0$ and $\id$, but these are definable from composition and antidomain.} They call algebras satisfying these laws modal restriction semigroups.

\begin{definition}\label{modal}
A \defn{modal restriction semigroup} \cite{DBLP:journals/ijac/JacksonS11} is an algebra of the signature $(\compo, \A)$ satisfying the equations
\begin{align*}
(a \compo b) \compo c &= a \compo (b \compo c)\\
{\id} \compo a &= a\\
\A(a) \compo a &= 0\\
0 \compo a &= 0\\
a \compo 0 &= 0\\
a \compo \A(b) &= \A(a \compo b) \compo a 
\end{align*}
\begin{flushright}(the {\bf twisted law for antidomain})\end{flushright}
and the quasiequation
\begin{equation*}
\D(a) \compo b = \D(a) \compo c \;\;\;\wedge \;\;\;\A(a) \compo b = \A(a) \compo c \;\;\;\limplies\;\;\; b = c
\end{equation*}
where $0 \coloneqq \A(b) \compo b$ for any $b$ (and the third equation says this is a well-defined constant), $\id \coloneqq \A(0)$ and $\D \coloneqq \A^2$.
\end{definition}

Note that the definition of modal restriction semigroups given by Jackson and Stokes states  they should be monoids, so $\id$ should also be a right identity. But this is a consequence of the equations we gave in \Cref{modal}, for
\begin{equation*}
a \compo \id = a \compo \A(0) = \A(a \compo 0) \compo a = \A(0) \compo a = {\id} \compo a = a
\end{equation*}
using the twisted law for the second equality.

For $n$-ary functions, working over the signature $(\<\phantom{a}\>\compo, \A_1, \ldots, \A_n)$, we can try to write down valid $n$-ary versions of the (quasi)equations appearing in \Cref{modal}. This is easy in every case except that of the twisted law for antidomain, which needs more care. 

This is a good point at which to note that we do not need to bracket expressions like $\boldsymbol{a} \compo \boldsymbol{b} \compo c$ since this can only mean $\boldsymbol{a} \compo (\boldsymbol{b} \compo c)$. When we do write the brackets, we do so only for emphasis.

\begin{proposition}\label{axioms}
The following equations and quasiequations are valid for the class of $(\<\phantom{a}\>\compo, \A_1, \ldots, \A_n)$-algebras representable by $n$-ary partial functions.
\begin{align}\label{super}
\<{\vec a} \compo b_1, \ldots, {\vec a} \compo b_n \> \compo c &= {\vec a} \compo ({\vec b} \compo c) \text{\,\,\qquad\qquad\,\,\,\,(\defn{superassociativity})}\\
\label{pi}
\boldsymbol{\pi} \compo a &= a\\
\label{zero_n}
\<\A_1^n\!a\> \compo a &= 0\\
\label{nowhere}
\< a_1, \ldots, a_{i-1}, 0, a_{i+1}, \ldots , a_n \> \compo b &= 0  \text{\qquad\qquad\qquad\qquad\qquad\qquad\,\,\,\,\, for every }i\\
\label{right}
{\vec a} \compo 0 &= 0\\
\label{anti-twisted}
{\vec a} \compo \A_i\!b &= \< \A_1^n({\vec a} \compo b)\> \compo \<\D_1^n\!a_1\> \compo \ldots \compo \<\D_1^n\!a_n\> \compo a_i \\
\nonumber& \text{\qquad\qquad\qquad\qquad\qquad\qquad\qquad\!\enspace\,\, for every }i\\
\nonumber&\text{\qquad\quad\,\,(the \defn{twisted laws for antidomain})}
\end{align}
\begin{equation}\label{quasi}
\< \D_1^n\!a\> \compo b = \< \D_1^n\!a\> \compo c \;\;\;\wedge\;\;\; \< \A_1^n\!a\> \compo b = \< \A_1^n\!a\> \compo c \;\;\;\limplies\;\;\; b = c
\end{equation}
where $0 \coloneqq \<\A_1^n\!b\> \compo b$ for any $b$ (and \eqref{zero_n} says this is a well-defined constant), $\pi_i \coloneqq \A_i\!0$ and $\D_i \coloneqq \A_i^2$ (a double application of $\A_i$).
\end{proposition}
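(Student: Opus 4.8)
The plan is to verify each axiom directly in an arbitrary algebra of $n$-ary partial functions, since validity in the representation class reduces to validity in concrete algebras. Fix such an algebra with base $X$ and equivalence relation $E$. For each numbered (quasi)equation I would unwind both sides as sets of $(n+1)$-tuples via the definitions in \Cref{first} and show they coincide (or that the implication holds). The only genuinely involved case is \eqref{anti-twisted}; the rest are short.

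First I would dispatch the easy equations. For \eqref{super} (superassociativity), an $(n+1)$-tuple $(\vec x, z)$ lies in either side iff there exist intermediate tuples $\vec y \in X^n$ and $\vec w \in X^n$ with $(\vec x, y_i) \in a_i$ for each $i$, then $(\vec y, w_j)\in b_j$ for each $j$, then $(\vec w, z)\in c$; matching up the quantifiers on the two sides gives equality. For \eqref{pi}, note $\boldsymbol\pi \compo a$ feeds the diagonal projections into $a$, so $(\vec x, z)$ is in it iff all $x_i$ are $E$-equivalent and $(\vec x, z)\in a$; since membership in $a$ already forces $E$-equivalence of the $x_i$ (property \eqref{one}), this is just $a$. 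For \eqref{zero_n}, $\<\A_1^n a\>\compo a$ requires a tuple $\vec x$ of $E$-equivalent points on which $a$ is simultaneously undefined (from the $\A_i$ components, which return $x_i$) and defined (to compose with $a$) — impossible — so the result is $\emptyset = 0$; this also shows $0$ is independent of the choice of $b$. Equation \eqref{nowhere} holds because one component of the tuple being composed with $b$ is $\emptyset$, so no intermediate $y_i$ can be produced; and \eqref{right} holds because $\vec a \compo 0$ requires $(\vec y, z) \in 0 = \emptyset$. For the quasiequation \eqref{quasi}: recalling $\D_i = \A_i^2$ computes the $i$th projection restricted to the \emph{domain} of $a$ and $\A_i$ restricted to the antidomain, the hypotheses say $b$ and $c$ agree after restricting the input tuple to $\mathrm{dom}(a)$ and to its complement (within $E$-equivalent tuples); since membership of any tuple in $b$ or $c$ forces $E$-equivalence, every tuple is covered by one of the two cases, so $b = c$.

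The main work is \eqref{anti-twisted}. Fix $i$ and take $(\vec x, y)$ in the left-hand side $\vec a \compo \A_i\!b$: there is $\vec w \in X^n$ with $(\vec x, w_j) \in a_j$ for each $j$, and $(\vec w, y) \in \A_i\!b$, meaning the $w_j$ are all $E$-equivalent, $b$ is undefined at $\vec w$, and $y = w_i$. On the right-hand side, the factor $\<\A_1^n(\vec a\compo b)\>$ contributes the requirement that $\vec a\compo b$ be undefined at $\vec x$ (returning the diagonal), which — given that $\vec a$ \emph{is} defined at $\vec x$ with value $\vec w$ — is equivalent to $b$ being undefined at $\vec w$; and it also forces the $x_k$ to be pairwise $E$-equivalent. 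The chain $\<\D_1^n a_1\>\compo\cdots\compo\<\D_1^n a_n\>\compo a_i$ then just says $\vec x$ lies in the domain of each $a_j$ and passes $\vec x$ through to produce $a_i(\vec x) = w_i = y$ as the output, once one checks (using property \eqref{one} and associativity of composition) that the successive diagonal-domain restrictions do not lose any tuples. Matching these conditions with the left-hand side gives set equality. The delicate points to get right are the bookkeeping of which tuples are required to be $E$-equivalent (the $\A$'s and $\D$'s silently enforce this on the \emph{input} side, mirroring the role of the $\pi_i$ in the twisted law), and confirming that $\vec a$ being defined at $\vec x$ lets us transport ``$\vec a\compo b$ undefined at $\vec x$'' to ``$b$ undefined at $\vec a(\vec x)$'' — this is exactly the $n$-ary analogue of the unary identity $a\compo \A(b) = \A(a\compo b)\compo a$, and it is the step I expect to require the most care. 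I would also remark that the definitions $\pi_i \coloneqq \A_i 0$ and $\D_i \coloneqq \A_i^2$ compute the concrete operations of the same name, which is a one-line check from the set-theoretic definitions and is needed for the displayed equations to even typecheck against \Cref{first}.
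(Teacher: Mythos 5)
Your proof is correct and takes essentially the same route as the paper: a direct set-theoretic verification of each (quasi)equation in a concrete algebra of $n$-ary partial functions, with the same case analyses for the twisted laws \eqref{anti-twisted} and for \eqref{quasi}. The only inessential difference is that the paper first reduces to square algebras (via the embedding into a product of square-representable algebras and preservation of quasiequations under subalgebras and products), whereas you verify the relativised versions directly and use the requirement that every tuple in a function consists of $E$-equivalent points to discharge the $E$-equivalence side conditions; both work.
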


\begin{proof}
We noted in the previous section that every algebra representable by $n$-ary partial functions is isomorphic to a subalgebra of a product of algebras having a square representation. As the validity of quasiequations is preserved by taking products and subalgebras, it suffices to prove validity only for algebras having square representations. Further, since representations are themselves isomorphisms, it is sufficient to prove validity for an arbitrary square algebra of $n$-ary partial functions. So suppose we have such an algebra, with base $X$.

The validity of the superassociative law has been recognised since Menger noted it in \cite{algebraofanalysis}. We turn next to \eqref{zero_n}. Given an $n$-ary partial function $a$, if $\<\A_1^n\!a\> \compo a$ is to be defined at an $n$-tuple $\boldsymbol{x}$ then there should be a $\boldsymbol{y}$ with $\A_i(a)(\boldsymbol{x}) = y_i$ for each $i$ and with $a$ defined at $\boldsymbol{y}$. Since each $\A_i\!a$ is a restriction of the $i$th projection, $\boldsymbol{y}$ can only be $\boldsymbol{x}$. But if $\A_1\!a$ is defined at $\boldsymbol{x}$ then $a$ cannot be. Hence $\<\A_1^n\!a\> \compo a$ is the nowhere-defined function. So $0$ is well defined, that is, the value of $\<\A_1^n\!a\> \compo a$ does not depend on the choice of $a$, and so \eqref{zero_n} is valid. The validity of \eqref{nowhere} and the validity of \eqref{right} are now both clear.

Now $\pi_i \coloneqq \A_i\!0$ is the $i$th projection restricted to those $n$-tuples in $X^n$ where $0$ is not defined. So $\pi_i$ is, as the notation indicates, the $i$th projection on the set of \emph{all} $n$-tuples in $X^n$. The validity of \eqref{pi} is now clear.

For the twisted laws for antidomain, first suppose that $\boldsymbol{a} \compo \A_i\!b$ is defined at $\boldsymbol{x}$. Then we know that $a_1, \ldots, a_n$ are all defined at $\boldsymbol{x}$ and that $b$ is not defined at $\<a_1(\boldsymbol{x}), \ldots, a_n(\boldsymbol{x})\>$. Hence $\D_j\!a_k$ is defined at $\boldsymbol{x}$ for every $j, k$ and $\boldsymbol{a} \compo b$ is not defined at $\boldsymbol{x}$. It follows that $\A_j(\boldsymbol{a} \compo b)$ \emph{is} defined at $\boldsymbol{x}$ for every $j$. It is now apparent that $\< \A_1^n({\vec a} \compo b)\> \compo \<\D_1^n\!a_1\> \compo \ldots \compo \<\D_1^n\!a_n\> \compo a_i$ is defined at $\boldsymbol{x}$ with value $a_i(\boldsymbol{x})$---the same value as $\boldsymbol{a} \compo \A_i\!b$.

If $\boldsymbol{a} \compo \A_i\!b$ is \emph{not} defined at an $n$-tuple $\boldsymbol{x}$, then this is either because $a_j$ is undefined at $\boldsymbol{x}$ for some $j$ or all $a_j$ \emph{are} defined at $\boldsymbol{x}$, but $\A_i\!b$ is not defined at $\<a_1(\boldsymbol{x}), \ldots, a_n(\boldsymbol{x})\>$. If $a_j$ is undefined at $\boldsymbol{x}$ then it is clear that $\< \A_1^n({\vec a} \compo b)\> \compo \<\D_1^n\!a_1\> \compo \ldots \compo \<\D_1^n\!a_n\> \compo a_i$ cannot be defined at $\boldsymbol{x}$. In the second case, $b$ must be defined at $\<a_1(\boldsymbol{x}), \ldots, a_n(\boldsymbol{x})\>$ and so $\boldsymbol{a} \compo b$ is defined at $\boldsymbol{x}$. Again it is clear that $\< \A_1^n({\vec a} \compo b)\> \compo \<\D_1^n\!a_1\> \compo \ldots \compo \<\D_1^n\!a_n\> \compo a_i$ cannot be defined at $\boldsymbol{x}$.

For \eqref{quasi}, suppose the antecedent of the implication is true. Let $\boldsymbol{x}$ be an $n$-tuple in $X^n$. If $a$ is defined on $\boldsymbol{x}$ then $\D_i\!a$ is defined at $\boldsymbol{x}$ for each $i$ and accordingly $\< \D_1^n\!a\> \compo b = \< \D_1^n\!a\> \compo c$ says that either $b(\boldsymbol{x}) = c(\boldsymbol{x})$ or both $b$ and $c$ are undefined at $\boldsymbol{x}$. If $a$ is undefined at $\boldsymbol{x}$ then $\A_i\!a$ is defined at $\boldsymbol{x}$ for each $i$ and this time $\< \A_1^n\!a\> \compo b = \< \A_1^n\!a\> \compo c$ says that either $b(\boldsymbol{x}) = c(\boldsymbol{x})$ or both $b$ and $c$ are undefined at $\boldsymbol{x}$.
\end{proof}

Note that the naive $n$-ary versions of the twisted law for antidomain, namely ${\vec a} \compo \A_i\!b = \< \A_1^n({\vec a} \compo b)\> \compo a_i$, for every $i$, are not valid (except in the unary case). Indeed if at an $n$-tuple, $a_i$ is defined, but $a_j$ is undefined for some $j$ different to $i$, then $\boldsymbol{a} \compo \A_i\!b$ is undefined, but $\<\A_1^n(\boldsymbol{a} \compo b)\> \compo a_i$ will be defined.

To compensate for the complication with the twisted laws, we introduce as an axiom the equation
\begin{equation}\label{newdomain}
\<\D_1^n\!a\> \compo a = a
\end{equation}
whose validity is clear and has been noted before; for example it appears as Equation (10) in \cite{1018.20057}.

In addition we will need one extra indexed set of equations (trivial in the unary case) namely
\begin{align}\label{hide}
\A_i\!\A_j\!a &= \A_i\!\A_k\!a && \text{for every }i, j, k
\end{align}
whose validity we now prove.

\begin{proposition}\label{prop:hide}
The indexed equations of \eqref{hide} are valid for the class of $(\<\phantom{a}\>\compo, \A_1,\linebreak \ldots, \A_n)$-algebras representable by $n$-ary partial functions.
\end{proposition}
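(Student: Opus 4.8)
The plan is to argue semantically, in exactly the style of the proof of \Cref{axioms}. Since \eqref{hide} is an equation, and the class of equations valid in a family of algebras is closed under taking subalgebras and products, and since (as noted in the previous section) every algebra representable by $n$-ary partial functions embeds into a product of algebras possessing square representations, it suffices to verify \eqref{hide} in an arbitrary square algebra of $n$-ary partial functions with base $X$. (One could equally well work directly with a general algebra of $n$-ary partial functions and carry the equivalence relation $E$ through the argument unchanged; the computation below is insensitive to this.)

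The key observation is the following: for any $n$-ary partial function $a$ in the algebra and any index $j$, the function $\A_j\!a$ is defined at an $n$-tuple $\vec x \in X^n$ precisely when $a$ is \emph{not} defined at $\vec x$. This is immediate from the definition of $\A_j$: when $a$ is undefined at $\vec x$, the restriction of the $j$th projection $\A_j\!a$ sends $\vec x$ to $x_j$, and when $a$ is defined at $\vec x$ it sends $\vec x$ nowhere. Crucially, the condition ``$a$ is defined at $\vec x$'', which characterises exactly those $\vec x$ at which $\A_j\!a$ is \emph{undefined}, makes no reference to the index $j$.

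Applying $\A_i$ to $\A_j\!a$ therefore yields the $i$th projection restricted to the set of $\vec x$ at which $\A_j\!a$ is undefined, that is, $\A_i\!\A_j\!a = \{(\vec x, x_i) \in X^{n+1} \mid a \text{ is defined at } \vec x\}$. Since the right-hand side is independent of $j$, we obtain $\A_i\!\A_j\!a = \A_i\!\A_k\!a$ for all $i, j, k$; in fact both sides equal $\D_i\!a$, which is consistent with the abbreviation $\D_i \coloneqq \A_i^2$. The only point requiring any care is the initial reduction to square algebras (or, alternatively, the bookkeeping of the $E$-equivalence clause in the general case); once that is in place the verification is a one-line unwinding of the definition of the antidomain operations, so no genuine obstacle is expected.
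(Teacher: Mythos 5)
Your proposal is correct and follows essentially the same route as the paper: reduce to a square algebra of $n$-ary partial functions via closure of equational validity under subalgebras and products, then observe that $\A_j\!a$ is undefined at $\boldsymbol{x}$ exactly when $a$ is defined at $\boldsymbol{x}$, a condition independent of $j$, so that $\A_i\!\A_j\!a$ is the $i$th projection restricted to the domain of $a$ regardless of $j$. The paper phrases this as a containment-plus-symmetry argument rather than computing the set outright, but the content is identical.
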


\begin{proof}
As before it is sufficient to prove validity for an arbitrary square algebra of $n$-ary partial functions. So suppose we have such an algebra, with base $X$.

Suppose that $\A_i\!\A_j\!a$ is defined on an $n$-tuple $\boldsymbol{x}$, necessarily with value $x_i$. Then  $\A_j\!a$ is not defined on $\boldsymbol{x}$. Hence $a$ \emph{is} defined on $\boldsymbol{x}$. It follows that $\A_k\!a$ is not defined on $\boldsymbol{x}$ and from there we deduce that $\A_i\!\A_k\!a$ is defined on $\boldsymbol{x}$, necessarily with value $x_i$. Hence the function $\A_i\!\A_j\!a$ is a restriction of $\A_i\!\A_k\!a$. By symmetry the reverse is true and the two functions are equal.
\end{proof}

We are going to prove that \eqref{super}--\eqref{hide} axiomatise the class of $(\<\phantom{a}\>\compo, \A_1, \ldots, \A_n)$-algebras that are representable by $n$-ary partial functions and hence the representation class is a quasivariety. But before we do that, we show that the representation class is \emph{not} a variety.

\begin{proposition}
The class of $(\<\phantom{a}\>\compo, \A_1, \ldots, \A_n)$-algebras that are representable by $n$-ary partial functions is not closed under quotients and hence is not a variety.
\end{proposition}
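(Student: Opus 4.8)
The plan is to exhibit a single representable algebra together with a congruence whose quotient fails to be representable, and to detect the failure using one of the valid identities established above — most naturally the quasiequation \eqref{quasi} or one of the defining properties of antidomain. The general strategy mirrors the unary case in \cite{DBLP:journals/ijac/JacksonS11}: representable algebras in this signature satisfy a quasiequation (cancellativity modulo domain and antidomain) that is not preserved under homomorphic images, so one builds a representable algebra in which two elements $b, c$ are distinct but become identified in a quotient that still forces the two premises of \eqref{quasi} to hold.

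Concretely, I would first pick a small square algebra of $n$-ary partial functions on a two- or three-point base $X$, containing elements $a$, $b$, $c$ with $b \neq c$ but such that, in the quotient, the images of $\<\D_1^n\!a\> \compo b$ and $\<\D_1^n\!a\> \compo c$ coincide and the images of $\<\A_1^n\!a\> \compo b$ and $\<\A_1^n\!a\> \compo c$ coincide. The congruence would be generated by collapsing $b$ and $c$ (and whatever else is forced); one must check it is genuinely a congruence for composition and all the $\A_i$, and that it is proper, i.e. does not collapse $\<\D_1^n\!a\>\compo b$ onto something that also shows $b=c$ directly. Then the quotient violates \eqref{quasi} while, by \Cref{axioms}, every representable algebra satisfies \eqref{quasi}; hence the quotient is not representable, yet it is a homomorphic image of a representable (indeed square-representable) algebra. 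This shows the representation class is not closed under quotients, so not a variety.

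The main obstacle is arranging the example so that the congruence is simultaneously (a) a genuine congruence — the twisted laws \eqref{anti-twisted} make $\A_i$ interact with composition in a rigid way, so collapsing $b$ and $c$ may force many further collapses, and one must verify the resulting partition is closed under all operations; (b) proper — it must not be the full congruence, which would make the quotient trivial and vacuously representable; and (c) actually witnessing non-representability — the images of the relevant terms must line up to satisfy both antecedents of \eqref{quasi} while keeping $[b] \neq [c]$. Getting a base small enough to make the verification short, yet rich enough to separate the needed elements, is the delicate balancing act; I expect a base of size two or three with the algebra generated by one or two carefully chosen partial functions to suffice, with the bulk of the work being the finite check that the proposed map is a congruence.
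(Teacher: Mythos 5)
Your strategy is exactly the one the paper follows (and the one in \cite{DBLP:journals/ijac/JacksonS11}): produce a representable algebra with a congruence whose quotient refutes \eqref{quasi}. But the proposal stops precisely where the proof begins. The entire mathematical content of this proposition is the explicit example, and you have only listed the properties such an example would need to have, together with the obstacles to constructing it, without resolving any of them. ``I would first pick a small square algebra\dots I expect a base of size two or three\dots to suffice'' is a research plan, not a proof; the ``delicate balancing act'' you correctly identify at the end is exactly the part that remains to be done. As it stands, nothing has been exhibited, so nothing has been proved.

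For comparison, the paper's example is the algebra $\algebra{F}$ of $n$-ary partial functions on base $\{1,2,3\}$ with the equivalence relation $E$ partitioning the base into $\{1\}$ and $\{2,3\}$. Its $2(n+3)$ elements are: the empty function, the $n$ projections on $\{2,3\}^n$, the two constant functions (values $2$ and $3$) with domain $\{2,3\}^n$, and each of these with the pair $(\boldsymbol{1},1)$ adjoined. Identifying all elements with domain $\{2,3\}^n$ is a congruence (they all have the same domain, hence the same antidomains, and composing with them behaves uniformly), and taking $a$ to be any element with domain $\{2,3\}^n$, $b$ the element sending $\boldsymbol{1}\mapsto 1$ and constantly $2$ elsewhere, and $c$ the analogous element with value $3$, both antecedents of \eqref{quasi} hold in the quotient while $[b]\neq[c]$. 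Note one point where your sketch diverges in a way that matters: you propose a \emph{square} algebra, but the paper's example is deliberately non-square. The non-trivial $E$ is what keeps the tuple $\boldsymbol{1}$ (where $b$ and $c$ agree) cleanly separated from $\{2,3\}^n$ (where they differ and are collapsed); in a square algebra on the same base, the antidomain operations would produce projections on mixed tuples and the closure and congruence verifications become substantially murkier. So even granting the strategy, the specific example you gesture at may not be the one that works.
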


\begin{proof}
We adapt an example given in \cite{DBLP:journals/ijac/JacksonS11} to describe an algebra of $n$-ary partial functions having a quotient that does not validate \eqref{quasi} and so is not representable by partial functions.

We describe an algebra $\algebra{F}$ of $n$-ary partial functions, with base $\{1,2,3\}$. The equivalence relation to which the antidomain operations are relativised partitions the base into $\{1\}$ and $\{2, 3\}$. The elements of $\algebra{F}$ are the following $2(n+3)$ elements.
\begin{itemize}
\item
the empty function
\item
the $i$th projection on $\{2,3\}^n$, for each $i$
\item
the function with domain $\{2,3\}^n$ that is constantly $2$
\item
the function with domain $\{2,3\}^n$ that is constantly $3$
\item
each of the aforementioned $n+3$ functions with the pair $(\boldsymbol{1}, 1)$ adjoined
\end{itemize}
It is clear that $\algebra{F}$ is closed under the $n$ antidomain operations. Checking that $\algebra{F}$ is closed under composition is also straightforward.

It is easy to check, directly, that identifying all the elements with domain $\{2,3\}^n$ produces a quotient of $\algebra{A}$. Let $a$ be any element with domain $\{2,3\}^n$, let $b$ be the element sending $\boldsymbol{1}$ to $1$ and constantly $2$ elsewhere and let $c$ be the element sending $\boldsymbol{1}$ to $1$ and constantly $3$ elsewhere. Then in the quotient
\begin{align*}
\<\D_1^n\![a]\> \compo [b] &= \<\D_1^n\![a]\> \compo [c]
\intertext{and}
\<\A_1^n\![a]\> \compo [b] &= \<\A_1^n\![a]\> \compo [c]\text{,}
\end{align*}
but $[b]$ and $[c]$ are not equal. Hence \eqref{quasi} is refuted in the quotient.
\end{proof}

Next comes the work of deducing the various consequences of \eqref{super}--\eqref{hide} that are needed to prove their sufficiency for representability.

We noted earlier that the equation ${\vec a} \compo \A_i\!b = \< \A_1^n({\vec a} \compo b)\> \compo a_i$ is not valid, but we can obtain a version in the special case that $\boldsymbol{a}$ is of the form $\<\A_1^n\!a'\>$ for some $a'$.

\begin{lemma}\label{lemma:restricted}
The indexed equations
\begin{align}\label{restricted}
\<\A_1^n\!a\> \compo \A_i\!b &= \< \A_1^n(\<\A_1^n\!a\> \compo b)\> \compo \A_i\!a &&\text{for every $i$}
\end{align}
are consequences of axioms \eqref{super}--\,\eqref{hide}.
\end{lemma}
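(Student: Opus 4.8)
The plan is to establish equation \eqref{restricted} purely syntactically by manipulating the twisted laws \eqref{anti-twisted}, exploiting the fact that when the left factor in a composition is of the form $\<\A_1^n\!a'\>$, the chain of domain operations $\<\D_1^n\!a_1\> \compo \ldots \compo \<\D_1^n\!a_n\>$ that appears on the right-hand side of the general twisted law can be collapsed. The key observation is that $\D_j(\A_k\!a) = \A_j\!\A_k\!a$ (by definition $\D_j = \A_j^2$), and by \eqref{hide} this equals $\A_j\!\A_i\!a$ for any choice of second index, in particular it equals $\A_j\!(\A_i\!a)$ so the ``component'' functions $\A_1\!a, \ldots, \A_n\!a$ all have the same domain, namely the antidomain of $a$. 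So I would first record, as a preliminary consequence of \eqref{hide} and \eqref{newdomain}, that $\<\D_1^n(\A_i\!a)\> \compo \A_i\!a = \A_i\!a$ and more usefully that the whole block $\<\D_1^n\!\A_1\!a\> \compo \ldots \compo \<\D_1^n\!\A_n\!a\>$ acts as the identity when post-composed with $\A_i\!a$, or can be simplified to a single copy of $\<\D_1^n\!\A_i\!a\>$.

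Next I would instantiate the twisted law \eqref{anti-twisted} with the tuple $\boldsymbol{a}$ taken to be $\<\A_1^n\!a\>$, i.e. with $a_k := \A_k\!a$ for each $k$. This immediately gives
\[
\<\A_1^n\!a\> \compo \A_i\!b = \< \A_1^n(\<\A_1^n\!a\> \compo b)\> \compo \<\D_1^n\!\A_1\!a\> \compo \ldots \compo \<\D_1^n\!\A_n\!a\> \compo \A_i\!a \text{.}
\]
The target equation \eqref{restricted} is this identity with the middle block of $\D$-tuples deleted. So the remaining task is to show that, inside this particular term, the block $\<\D_1^n\!\A_1\!a\> \compo \ldots \compo \<\D_1^n\!\A_n\!a\>$ can be absorbed. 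Using superassociativity \eqref{super} to reassociate, and the fact from the preliminary step that $\<\D_1^n\!\A_k\!a\> \compo (\cdots) \compo \A_i\!a$ progressively reduces because each $\D_j\!\A_k\!a$ has the same domain as $\A_i\!a$, the block telescopes. Concretely, $\<\D_1^n\!\A_n\!a\> \compo \A_i\!a = \A_i\!a$ would follow from \eqref{newdomain} once one knows $\A_n\!a$ and $\A_i\!a$ have equal domain; iterating this from the innermost factor outward removes the block one tuple at a time. I would also need the standard consequence (derivable from \eqref{newdomain}, \eqref{super} and \eqref{hide}) that $\D$ is ``idempotent enough'': $\D_j(\A_k\!a) = \D_j(\A_i\!a)$, which is exactly \eqref{hide} rewritten with $\D = \A^2$.

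The main obstacle I anticipate is the bookkeeping needed to justify the telescoping of the $\D$-block entirely at the equational level, without appealing to the concrete semantics — in particular, proving the auxiliary fact that $\<\D_1^n\!c\> \compo d = d$ whenever $c$ and $d$ have the ``same domain'' is only available to us in the form of already-derived equations, so I will need a clean lemma of the shape: from $\<\D_1^n\!c\> \compo e = \<\D_1^n\!d\> \compo e$-type hypotheses (or from $c, d$ both being built from $\A$ applied to the same element) one gets $\<\D_1^n\!c\> \compo d = d$. Establishing and then repeatedly applying this absorption principle, while carefully reassociating with \eqref{super} so that the $\A_i\!a$ at the far right stays adjacent to the block being eliminated, is where the real care lies; everything else is routine substitution into \eqref{anti-twisted}.
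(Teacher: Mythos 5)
Your proposal follows the paper's proof exactly: instantiate the $i$th twisted law \eqref{anti-twisted} with $\boldsymbol{a} = \<\A_1^n\!a\>$, use \eqref{hide} (in the form $\D_j\!\A_k\!a = \D_j\!\A_i\!a$) to replace every block $\<\D_1^n\!\A_k\!a\>$ by $\<\D_1^n\!\A_i\!a\>$, and then delete the blocks one at a time by \eqref{newdomain}; the general ``same domain'' absorption lemma and the reassociation you worry about are not needed, since the term is already right-associated and \eqref{newdomain} applies verbatim to $\<\D_1^n\!\A_i\!a\> \compo \A_i\!a$. The only slip is the line ``$\D_j(\A_k\!a) = \A_j\!\A_k\!a$'', which should read $\D_j(\A_k\!a) = \A_j\!\A_j\!\A_k\!a$, but you state the correct consequence $\D_j(\A_k\!a) = \D_j(\A_i\!a)$ later, so the argument goes through.
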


\begin{proof}
We have
\begin{align*}
\<\A_1^n\!a\> \compo \A_i\!b &= \< \A_1^n(\<\A_1^n\!a\> \compo b)\> \compo \<\D_1^n\!\A_1\!a\> \compo \ldots \compo \<\D_1^n\!\A_n\!a\> \compo \A_i\!a \\
&= \< \A_1^n(\<\A_1^n\!a\> \compo b)\> \compo \<\D_1^n\!\A_i\!a\> \compo \ldots \compo \<\D_1^n\!\A_i\!a\> \compo \A_i\!a \\
&= \< \A_1^n(\<\A_1^n\!a\> \compo b)\> \compo \A_i\!a
\end{align*}
by first applying the $i$th twisted law for antidomain, then applying \eqref{hide} and then repeatedly applying \eqref{newdomain}.
\end{proof}

We will give \eqref{restricted} the full title: the \defn{restricted twisted laws for antidomain}, but since these are the twisted laws we apply most frequently, when we refer simply to `the $i$th twisted law' we will mean the $i$-indexed version of \eqref{restricted}.

In the following lemma and in later proofs an `s'  above an equality sign indicates an appeal to superassociativity, a `t' an appeal to the twisted laws and any number an appeal to the corresponding equation.

\begin{lemma}\label{first_lemma}
The following equations are consequences of axioms \eqref{super}--\,\eqref{hide}.
\begin{align}\label{didempotent}
\<\A_1^n\!a\> \compo \A_i\!a &= \A_i\!a&& \text{for every }i\\
\label{dcommutative}
\<\A_1^n\!a\> \compo \A_i\!b &= \<\A_1^n\!b\> \compo \A_i\!a && \text{for every }i\\
\label{morecommutative}
\<\A_1^n\!a\> \compo \<\A_1^n\!b\> \compo c &= \<\A_1^n\!b\> \compo \<\A_1^n\!a\> \compo c && \\
\label{switch}
\D_j(\<\A_1^n\!a\> \compo \A_i\!b) &= \<\A_1^n\!a\> \compo \A_j\!b && \text{for every }i, j
\end{align}
\end{lemma}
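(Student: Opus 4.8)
The plan is to obtain all four equations by purely syntactic rewriting from the axioms, with the restricted twisted laws \eqref{restricted} of Lemma~\ref{lemma:restricted} and superassociativity \eqref{super} doing most of the work, assisted by \eqref{pi}, \eqref{zero_n}, \eqref{nowhere}, \eqref{right}, \eqref{newdomain} and \eqref{hide}, and, for \eqref{dcommutative}, the quasiequation \eqref{quasi}. I would first record a preliminary observation used throughout: since $\D_k\!a = \A_k\A_k\!a = \A_k\A_m\!a$ by \eqref{hide}, the tuple $\<\D_1^n\!a\>$ equals $\<\A_1^n\!\A_m\!a\>$ for every index $m$. Two consequences follow at once: $\<\A_1^n\!a\>$-precomposition is idempotent, $\<\A_1^n\!a\>\compo\<\A_1^n\!a\>\compo c = \<\A_1^n\!a\>\compo c$ (push the outer tuple inside by superassociativity and apply \eqref{didempotent} to each resulting component $\<\A_1^n\!a\>\compo\A_j\!a$), and $\<\D_1^n\!a\>\compo\A_j\!a = 0$ (rewrite $\<\D_1^n\!a\>$ as $\<\A_1^n\!\A_j\!a\>$, apply \eqref{zero_n} to the element $\A_j\!a$, then \eqref{nowhere}). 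Given this, \eqref{didempotent} is the warm-up: instantiate the $i$th restricted twisted law at $b := a$, rewrite $\<\A_1^n\!a\>\compo a$ as $0$ by \eqref{zero_n} so that the leading tuple becomes $\<\A_1\!0,\ldots,\A_n\!0\> = \boldsymbol{\pi}$, and conclude by \eqref{pi}.

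The crux is \eqref{dcommutative}, which I would prove by applying the quasiequation \eqref{quasi} with splitting element $a$. Writing $u := \<\A_1^n\!a\>\compo\A_i\!b$ and $v := \<\A_1^n\!b\>\compo\A_i\!a$, it suffices to check $\<\D_1^n\!a\>\compo u = \<\D_1^n\!a\>\compo v$ and $\<\A_1^n\!a\>\compo u = \<\A_1^n\!a\>\compo v$. For the first, superassociativity distributes $\<\D_1^n\!a\>$ over the components, giving the factors $\<\D_1^n\!a\>\compo\A_j\!a$ (each $0$, by the preliminary observation) on the $u$-side, and the factors $\<\D_1^n\!a\>\compo\A_j\!b$ on the $v$-side; rewriting $\<\D_1^n\!a\>$ as $\<\A_1^n\!\A_j\!a\>$ and applying a restricted twisted law converts each of the latter into $\<\A_1^n(\<\D_1^n\!a\>\compo b)\>\compo\D_j\!a$, and recombining by superassociativity and capping with $\A_i\!a$ produces a trailing $\<\D_1^n\!a\>\compo\A_i\!a = 0$; so both sides collapse to $0$ via \eqref{nowhere} and \eqref{right}. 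For the second, a short computation using superassociativity, the restricted twisted law and \eqref{didempotent} shows that $\<\A_1^n\!a\>\compo u$ and $\<\A_1^n\!a\>\compo v$ each reduce to $u$. The quasiequation then yields $u = v$. I expect this to be the main obstacle: my argument for it is the only point at which \eqref{quasi}, rather than just the equational axioms, is invoked (a purely equational route would first have to set up the semilattice-with-join structure of the antidomain terms), and the bookkeeping in shuttling between $\<\D_1^n\!a\>$ and $\<\A_1^n\!\A_m\!a\>$---with $m$ chosen to suit whichever trailing antidomain term is present---is where care is needed.

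With \eqref{dcommutative} available, \eqref{morecommutative} is immediate: superassociativity rewrites $\<\A_1^n\!a\>\compo\<\A_1^n\!b\>\compo c$ as the composite of $\<\<\A_1^n\!a\>\compo\A_1\!b,\ldots,\<\A_1^n\!a\>\compo\A_n\!b\>$ with $c$, each inner factor flips to $\<\A_1^n\!b\>\compo\A_j\!a$ by \eqref{dcommutative}, and reassembling by superassociativity gives $\<\A_1^n\!b\>\compo\<\A_1^n\!a\>\compo c$. Finally, \eqref{switch} follows from a lighter computation of the same kind: since $\D_j = \A_j^2$, expand $\A_j\A_j(\<\A_1^n\!a\>\compo\A_i\!b)$, use \eqref{hide} to normalise the indices of the iterated antidomain terms, and apply the restricted twisted laws together with \eqref{didempotent}, \eqref{dcommutative} and \eqref{newdomain} to collapse the result to $\<\A_1^n\!a\>\compo\A_j\!b$, the residual steps being routine appeals to \eqref{pi}, \eqref{right} and \eqref{nowhere}.
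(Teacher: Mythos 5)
Your arguments for \eqref{didempotent}, \eqref{dcommutative} and \eqref{morecommutative} are correct and essentially identical to the paper's own: the same preliminary facts (via \eqref{hide}, $\<\D_1^n\!a\>$ can be read as $\<\A_1^n\!\A_m\!a\>$, whence $\<\D_1^n\!a\>\compo\A_j\!a = 0$ and hence $\<\D_1^n\!a\>\compo\<\A_1^n\!a\>\compo c = 0$), the same single application of \eqref{quasi} with splitting element $a$ for \eqref{dcommutative}, and the same two-line derivation of \eqref{morecommutative} from \eqref{dcommutative} and superassociativity.

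The gap is \eqref{switch}. You assert it is ``a lighter computation of the same kind'' and, explicitly, that \eqref{quasi} is invoked only once in the whole lemma; your recipe is to expand $\D_j=\A_j^2$, ``use \eqref{hide} to normalise the indices'', and then collapse with the restricted twisted laws, \eqref{didempotent}, \eqref{dcommutative} and \eqref{newdomain}. But \eqref{hide} only lets you change the \emph{inner} index of a double antidomain applied to a fixed element: from $\A_j\!\A_j(u)$ with $u = \<\A_1^n\!a\>\compo\A_i\!b$ it yields $\A_j\!\A_k(u)$, and it gives no access to the index $i$ buried inside the composite $u$, which is not syntactically of the form $\A(\text{--})$. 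Moreover no equational axiom expresses the antidomain of a composition on its own (the twisted laws only rewrite compositions \emph{ending} in an antidomain term), so there is no rewriting of the kind you describe that strips the outer $\A_j\!\A_j$ or turns the $\A_i\!b$ inside $u$ into $\A_j\!b$. This is exactly why the paper's proof of \eqref{switch} is substantially heavier: it first establishes the two auxiliary identities \eqref{aux1}, namely $\A_j(\<\A_1^n\!a\>\compo\A_i\!b) = \A_j(\<\A_1^n\!a\>\compo\A_j\!b)$, and \eqref{aux2}, namely $\D_j(\<\A_1^n\!a\>\compo\A_j\!b) = \<\A_1^n\!a\>\compo\A_j\!b$, and these require one and two further applications of \eqref{quasi} respectively, together with computations using \eqref{dcommutative}, \eqref{morecommutative}, the twisted laws and \eqref{newdomain}. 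Your claim that only one use of \eqref{quasi} suffices is therefore unsubstantiated (and is in tension with the paper's later remark, in \Cref{section:intersection}, that of these consequences only \eqref{didempotent} was deduced without \eqref{quasi}). As it stands \eqref{switch} is not proved: either supply a genuinely complete equational derivation --- your sketch's first move is not licensed by \eqref{hide} --- or prove the two auxiliary identities via further applications of \eqref{quasi}, as the paper does.
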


\begin{proof}
We have
\begin{align*}
 \< \A_1^n\!a \> \compo \A_i\!a &= \<\A_1^n(\<\A_1^n\!a\> \compo a)\> \compo \A_i\!a  &&\text{by the }i\text{th twisted law}\\
&= \<\A_1^n\!0\> \compo \A_i\!a &&\text{by \eqref{zero_n}}\\
&= \boldsymbol{\pi} \compo \A_i\!a && \text{by the definition of }\boldsymbol{\pi}\\
&= \A_i\!a &&\text{by \eqref{pi}}
\end{align*}
proving \eqref{didempotent}.

Before proceeding with \eqref{dcommutative}--\eqref{switch}, we note the following useful consequences of \eqref{super}--\eqref{hide}. By \eqref{hide} then \eqref{zero_n} we see that
\begin{equation}\label{aux_zero}
\<\D_1^n\!a\> \compo \A_i\!a = \<\A_1^n\!\A_i\!a\> \compo \A_i\!a = 0
\end{equation}
and by first applying superassociativity and then \eqref{aux_zero} to $\<\D_1^n\!a\> \compo \<\A_1^n\!a\> \compo b$ we obtain
\begin{equation}\label{aux_z2}
\<\D_1^n\!a\> \compo \<\A_1^n\!a\> \compo b = 0\text{.}
\end{equation}

We will use \eqref{quasi} to prove \eqref{dcommutative}. Firstly
\begin{align*}
\<\A_1^n\!a\> \compo (\<\A_1^n\!a\> \compo \A_i\!b) &\myeq{s} \<\<\A_1^n\!a\> \compo \A_1\!a, \ldots, \<\A_1^n\!a\> \compo \A_n\!a\> \compo \A_i\!b \\
&\myeq{\ref{didempotent}}\<\A_1^n\!a\> \compo \A_i\!b 
\intertext{and}
\<\A_1^n\!a\> \compo (\<\A_1^n\!b\> \compo \A_i\!a) &\myeq{s} \<\<\A_1^n\!a\> \compo \A_1\!b, \ldots, \<\A_1^n\!a\> \compo \A_n\!b\> \compo \A_i\!a \\
&\myeq{t} \<\<\A_1^n(\<\A_1^n\!a\> \compo b)\> \compo \A_1\!a, \ldots, \<\A_1^n(\<\A_1^n\!a\> \compo b)\> \compo \A_n\!a\> \compo \A_i\!a \\
&\myeq{s} \<\A_1^n(\<\A_1^n\!a\> \compo b)\> \compo \<\A_1^n\!a\> \compo \A_i\!a\\
&\myeq{\ref{didempotent}} \<\A_1^n(\<\A_1^n\!a\> \compo b)\> \compo \A_i\!a  \\
&\myeq{t} \<\A_1^n\!a\> \compo \A_i\!b
\end{align*}
so we see that $\<\A_1^n\!a\> \compo (\<\A_1^n\!a\> \compo \A_i\!b)$ and $\<\A_1^n\!a\> \compo (\<\A_1^n\!b\> \compo \A_i\!a)$ coincide. We also have
\begin{align*}
\<\D_1^n\!a\> \compo (\<\A_1^n\!a\> \compo \A_i\!b) &\myeq{\ref{aux_z2}} 0 
\intertext{and}
\<\D_1^n\!a\> \compo (\<\A_1^n\!b\> \compo \A_i\!a) &\myeq{s} \<\<\D_1^n\!a\> \compo \A_1\!b, \ldots, \<\D_1^n\!a\> \compo \A_n\!b\> \compo \A_i\!a \\
&\myeq{t} \<\<\A_1^n(\<\D_1^n\!a\> \compo b)\> \compo \D_1\!a, \ldots, \<\A_1^n(\<\D_1^n\!a\> \compo b)\> \compo \D_n\!a\> \compo \A_i\!a \\
&\myeq{s} \<\A_1^n(\<\D_1^n\!a\> \compo b)\> \compo \<\D_1^n\!a\> \compo \A_i\!a \\
&\myeq{\ref{aux_zero}} \<\A_1^n(\<\D_1^n\!a\> \compo b)\> \compo 0\\
&\myeq{\ref{right}}  0
\end{align*}
and so $\<\D_1^n\!a\> \compo (\<\A_1^n\!a\> \compo \A_i\!b)$ and $\<\D_1^n\!a\> \compo (\<\A_1^n\!b\> \compo \A_i\!a)$ coincide, completing the proof of \eqref{dcommutative}.

\Cref{morecommutative} is a simple, but useful, consequence of \eqref{dcommutative}. We have
\begin{align*}
\<\A_1^n\!a\> \compo \<\A_1^n\!b\> \compo c &= \<\<\A_1^n\!a\> \compo \A_1\!b, \ldots, \<\A_1^n\!a\> \compo \A_n\!b\> \compo c && \text{by superassociativity}\\
&= \<\<\A_1^n\!b\> \compo \A_1\!a, \ldots, \<\A_1^n\!b\> \compo \A_n\!a\> \compo c && \text{by \eqref{dcommutative}}\\
&= \<\A_1^n\!b\> \compo \<\A_1^n\!a\> \compo c && \text{by superassociativity}
\end{align*}
as required.

To prove \eqref{switch} we prove that
\begin{align}\label{aux1}
\A_j(\<\A_1^n\!a\> \compo \A_i\!b) = \A_j(\<\A_1^n\!a\> \compo \A_j\!b) &&\text{for every }i, j\\
\intertext{and that}
\label{aux2}\D_j(\<\A_1^n\!a\> \compo \A_j\!b) = \<\A_1^n\!a\> \compo \A_j\!b&& \text{for every }j
\end{align}
are consequences of \eqref{super}--\eqref{hide}.

For \eqref{aux1} we have
\begin{align*}
\<\A_1^n\!a\> \compo \A_j(\<\A_1^n\!a\> \compo \A_i\!b) &= \<\A_1^n(\<\A_1^n\!a\> \compo \A_i\!b)\> \compo \A_j\!a && \text{by \eqref{dcommutative}}\\
&= \<\A_1^n\!a\> \compo \A_j\!\A_i\!b && \text{by the }j\text{th twisted law}\\
&= \<\A_1^n\!a\> \compo \A_j\!\A_j\!b && \text{by \eqref{hide}}\\
\intertext{and in the same way}
\<\A_1^n\!a\> \compo \A_j(\<\A_1^n\!a\> \compo \A_j\!b)&= \<\A_1^n(\<\A_1^n\!a\> \compo \A_j\!b)\> \compo \A_j\!a && \text{by \eqref{dcommutative}}\\
&= \<\A_1^n\!a\> \compo \A_j\!\A_j\!b && \text{by the }j\text{th twisted law}
\intertext{and we have}
\<\D_1^n\!a\> \compo \A_j(\<\A_1^n\!a\> \compo \A_i\!b) &= \<\A_1^n(\<\D_1^n\!a\> \compo \<\A_1^n\!a\> \compo \A_i\!b)\> \compo \D_j\!a && \text{by the }j\text{th twisted law}\\
&= \<\A_1^n\!0\> \compo \D_j\!a && \text{by \eqref{aux_z2}}\\
&= \boldsymbol{\pi} \compo \D_j\!a && \text{by the definition of $\boldsymbol{\pi}$}\\
&= \D_j\!a && \text{by \eqref{pi}}\\
\intertext{and similarly}
\<\D_1^n\!a\> \compo \A_j(\<\A_1^n\!a\> \compo \A_j\!b) &= \D_j\!a && 
\end{align*}
and so from an application of \eqref{quasi} we deduce the required equation.

\Cref{aux2} can be deduced with two applications of \eqref{quasi}, composing on the left with $\<\A_1^n\!a\>$ and $\<\D_1^n\!a\>$ and with $\<\A_1^n\!b\>$ and $\<\D_1^n\!b\>$. One can show that any of the compositions with $\<\D_1^n\!a\>$ or $\<\D_1^n\!b\>$ evaluate to 0, for example
\begin{align*}
&\hspace{13.4pt}\<\D_1^n\!a\> \compo \<\A_1^n\!b\> \compo \D_j(\<\A_1^n\!a\> \compo \A_j\!b)\\
&= \<\A_1^n\!b\> \compo \<\D_1^n\!a\> \compo \D_j(\<\A_1^n\!a\> \compo \A_j\!b) && \text{by \eqref{morecommutative}}\\
&= \<\A_1^n\!b\> \compo \<\D_1^n\!a\> \compo \A_j\!\A_j(\<\A_1^n\!a\> \compo \A_j\!b) && \text{by the definition of $\D_j$}\\
&= \<\A_1^n\!b\> \compo \<\A_1^n(\<\D_1^n\!a\> \compo \A_j(\<\A_1^n\!a\> \compo \A_j\!b))\> \compo \D_j\!a && \text{by the $j$th twisted law}\\
&= \<\A_1^n\!b\> \compo \<\A_1^n(\<\A_1^n(\<\D_1^n\!a\> \compo \<\A_1^n\!a\> \compo \A_j\!b)\> \compo \D_j\!a)\> \compo \D_j\!a && \text{by the $j$th twisted law}\\
&= \<\A_1^n\!b\> \compo \<\A_1^n(\<\A_1^n\!0\> \compo \D_j\!a)\> \compo \D_j\!a && \text{by \eqref{aux_z2}}\\
&= \<\A_1^n\!b\> \compo \<\A_1^n(\boldsymbol{\pi} \compo \D_j\!a)\> \compo \D_j\!a && \text{by the definition of $\boldsymbol{\pi}$} \\
&= \<\A_1^n\!b\> \compo \<\A_1^n\!\D_j\!a\> \compo \D_j\!a && \text{by \eqref{pi}}\\
&= \<\A_1^n\!b\> \compo 0 && \text{by \eqref{zero_n}}\\
&= 0 && \text{by \eqref{right}}
\end{align*}
and the others are similar. The compositions with $\<\A_1^n\!a\>$ and $\<\A_1^n\!b\>$ both equal $\<\A_1^n\!a\> \compo \A_j\!b$. Observe
\begin{align*}
\<\A_1^n\!a\> \compo \<\A_1^n\!b\> \compo \D_j(\<\A_1^n\!a\> \compo \A_j\!b) &= \<\A_1^n\!a\> \compo \<\D_1^n(\<\A_1^n\!a\> \compo \A_j\!b)\> \compo \A_j\!b && \text{by \eqref{dcommutative}}\\
&= \<\D_1^n(\<\A_1^n\!a\> \compo \A_j\!b)\> \compo \<\A_1^n\!a\> \compo \A_j\!b && \text{by \eqref{morecommutative}}\\
&= \<\A_1^n\!a\> \compo \A_j\!b && \text{by \eqref{newdomain}}\\
\intertext{and}
\<\A_1^n\!a\> \compo \<\A_1^n\!b\> \compo (\<\A_1^n\!a\> \compo \A_j\!b) &= \<\A_1^n\!a\> \compo \<\A_1^n\!a\> \compo \<\A_1^n\!b\> \compo \A_j\!b && \text{by \eqref{morecommutative}}\\
&= \<\A_1^n\!a\> \compo \<\A_1^n\!a\> \compo \A_j\!b && \text{by \eqref{didempotent}}\\
&= \<\A_1^n\!a\> \compo \<\A_1^n\!b\> \compo \A_j\!a && \text{by \eqref{dcommutative}}\\
&= \<\A_1^n\!b\> \compo \<\A_1^n\!a\> \compo \A_j\!a && \text{by \eqref{morecommutative}}\\
&= \<\A_1^n\!b\> \compo \A_j\!a && \text{by \eqref{didempotent}}\\
&= \<\A_1^n\!a\> \compo \A_j\!b && \text{by \eqref{dcommutative}}
\end{align*}
as claimed.

The equations of \eqref{switch} now follow easily, for
\begin{align*}
\D_j(\<\A_1^n\!a\> \compo \A_i\!b) &= \A_j\!\A_j(\<\A_1^n\!a\> \compo \A_i\!b) && \text{by definition}\\
&= \A_j\!\A_j(\<\A_1^n\!a\> \compo \A_j\!b)&&\text{by \eqref{aux1}}\\
&= \D_j(\<\A_1^n\!a\> \compo \A_j\!b) && \text{by definition}\\
&= \<\A_1^n\!a\> \compo \A_j\!b && \text{by \eqref{aux2}}
\end{align*}
as required.
\end{proof}

We will refer to elements of the form $\A_i\!a$, for any $a$, as $\A_i$-elements. For each $i$ define a product on $\A_i$-elements by $\A_i\!a \bullet_i \A_i\!b \coloneqq \<\A_1^n\!a\> \compo \A_i\!b$. We will omit the subscript and write $\bullet$ where possible. To prove these are well defined we need to show
\begin{align*}
\A_i\!a &= \A_i\!b \limplies \A_j\!a = \A_j\!b & \text{for every }i, j
\end{align*}
all hold. But by \eqref{switch}, with $a=0$, we know that $\D_j\!\A_i\!c = \A_j\!c$ is a consequence of our axioms for all $i$ and $j$. Then assuming $\A_i\!a = \A_i\!b$, we have $\A_j\!a = \D_j\!\A_i\!a = \D_j\!\A_i\!b = \A_j\!b$. Note also that, by \eqref{switch}, every product of $\A_i$-elements is an $\A_i$-element.

\begin{lemma}\label{semilattice}
It follows from \eqref{super}--\,\eqref{hide} that the $\A_i$-elements with the operation $\bullet$ form a semilattice.
\end{lemma}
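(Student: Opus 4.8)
The plan is to verify the three semilattice axioms for $\bullet$ on $\A_i$-elements: idempotency, commutativity, and associativity. Throughout I will use freely the consequences established in \Cref{first_lemma}, together with the fact (noted just before this lemma) that $\D_j\!\A_i\!a = \A_j\!a$, so that every product of $\A_i$-elements is again an $\A_i$-element and the operation is well defined.

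First, idempotency: $\A_i\!a \bullet_i \A_i\!a = \<\A_1^n\!a\> \compo \A_i\!a = \A_i\!a$, which is exactly \eqref{didempotent}.

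Next, commutativity: $\A_i\!a \bullet_i \A_i\!b = \<\A_1^n\!a\> \compo \A_i\!b$, and by \eqref{dcommutative} this equals $\<\A_1^n\!b\> \compo \A_i\!a = \A_i\!b \bullet_i \A_i\!a$.

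For associativity I would compute $(\A_i\!a \bullet_i \A_i\!b) \bullet_i \A_i\!c$ and $\A_i\!a \bullet_i (\A_i\!b \bullet_i \A_i\!c)$ and show both equal $\<\A_1^n\!a\> \compo \<\A_1^n\!b\> \compo \A_i\!c$. For the first, the inner product $\A_i\!a \bullet_i \A_i\!b = \<\A_1^n\!a\> \compo \A_i\!b$ is an $\A_i$-element, and by \eqref{switch} (with $a=0$, giving $\D_j$ of it equals the $j$-indexed version) its $j$th antidomain component is $\A_j\!(\<\A_1^n\!a\> \compo \A_i\!b) = \D_j(\<\A_1^n\!a\> \compo \A_i\!b) = \<\A_1^n\!a\> \compo \A_j\!b$; hence $\<\A_1^n\!(\<\A_1^n\!a\> \compo \A_i\!b)\> = \<\<\A_1^n\!a\> \compo \A_1\!b, \ldots, \<\A_1^n\!a\> \compo \A_n\!b\>$, which by superassociativity composes with $\A_i\!c$ to give $\<\A_1^n\!a\> \compo \<\A_1^n\!b\> \compo \A_i\!c$. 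For the second expression, the inner product is $\<\A_1^n\!b\> \compo \A_i\!c$, so $\A_i\!a \bullet_i (\A_i\!b \bullet_i \A_i\!c) = \<\A_1^n\!a\> \compo (\<\A_1^n\!b\> \compo \A_i\!c)$, and this already is $\<\A_1^n\!a\> \compo \<\A_1^n\!b\> \compo \A_i\!c$ (recall we need no brackets on such terms). So the two sides agree.

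The only step requiring any care is associativity, and there the main obstacle is bookkeeping: one must correctly identify the antidomain-tuple of the element $\<\A_1^n\!a\> \compo \A_i\!b$ using \eqref{switch}, rather than naively writing $\<\A_1^n\!\A_i\!b\>$. Once that is done, everything reduces to superassociativity. I would also remark that commutativity plus the idempotent computation \eqref{didempotent} together give that $\bullet_i$ is a meet-semilattice operation with $\A_i\!a \leq \A_i\!b$ iff $\A_i\!a \bullet_i \A_i\!b = \A_i\!a$, which is the form in which the lemma will be used later.
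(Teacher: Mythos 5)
Your proof is correct and follows essentially the same route as the paper: idempotency and commutativity are exactly \eqref{didempotent} and \eqref{dcommutative}, and associativity is obtained by reducing both bracketings to $\<\A_1^n\!a\> \compo \<\A_1^n\!b\> \compo \A_i\!c$ via superassociativity and \eqref{switch}, which is the paper's chain of equalities read as two halves. The only blemish is the intermediate expression $\A_j(\<\A_1^n\!a\> \compo \A_i\!b)$, which should be $\D_j$ of that element (equivalently, $\A_j$ of the underlying $e$ with $\A_i\!e$ equal to it); since you immediately pass to $\D_j(\<\A_1^n\!a\> \compo \A_i\!b)$ and apply \eqref{switch} correctly, this is a notational slip rather than a gap.
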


\begin{proof}
Equations \eqref{didempotent} and \eqref{dcommutative} state that $\bullet$ is idempotent and commutative respectively.

For associativity we have
\begin{align*}
\A_i\!a \bullet (\A_i\!b \bullet \A_i\!c) &= \< \A_1^n\!a \> \compo (\<\A_1^n\!b\> \compo \A_i\!c) && \text{by definition of $\bullet$}\\
&= \<\<\A_1^n\!a\> \compo \A_1\!b,\ldots, \<\A_1^n\!a\> \compo \A_n\!b\> \compo \A_i\!c &&\text{superassociativity}\\
&= \<\D_1(\<\A_1^n\!a\> \hspace{-.5pt}\compo\hspace{-.5pt} \A_i\!b),\ldots, \D_n(\<\A_1^n\!a\>\hspace{-.5pt} \compo\hspace{-.5pt} \A_i\!b)\> \hspace{-.5pt}\compo \hspace{-.5pt}\A_i\!c &&\text{by \eqref{switch}}\\
&= \<\A_1^n\!\A_i(\<\A_1^n\!a\> \compo \A_i\!b)\>  \compo  \A_i\!c &&\text{by \eqref{hide}}	\\
&= \A_i\!\A_i(\<\A_1^n\!a\> \compo \A_i\!b) \bullet \A_i\!c &&\text{by definition of $\bullet$}	\\
&= \D_i(\<\A_1^n\!a\> \compo \A_i\!b) \bullet \A_i\!c &&\text{definition of $\D_i$}	\\
&= (\<\A_1^n\!a\> \compo \A_i\!b) \bullet \A_i\!c &&\text{by \eqref{switch}}	\\
&= (\A_i\!a \bullet \A_i\!b) \bullet \A_i\!c &&\text{by definition of $\bullet$}
\end{align*}
as required.
\end{proof}

\begin{lemma}\label{boolean}
It follows from \eqref{super}--\,\eqref{hide} that for every $i$ the $\A_i$-elements, with product $\bullet$ and complement given by $\A_i$, form a Boolean algebra with top element $\pi_i$ and bottom element $0$.
\end{lemma}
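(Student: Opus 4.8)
The plan is to verify directly that the $\A_i$-elements, with $\bullet$ as meet, $\A_i$ as complementation, $0$ as bottom, $\pi_i$ as top, and join recovered by De Morgan, $x +_i y \coloneqq \A_i(\A_i\!x \bullet \A_i\!y)$, form a Boolean algebra. By \Cref{semilattice} we already have a meet-semilattice, so, ordering the $\A_i$-elements by $x \le y \iff x \bullet y = x$, what remains is to identify the bounds, check that $\A_i$ is an involution, establish the complementation laws, and prove distributivity.

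The bounds and involutivity are short. Putting $b = 0$ in the restricted twisted law of \Cref{lemma:restricted} and simplifying with \eqref{right}, the definition of $\boldsymbol{\pi}$, and \eqref{pi} gives $\<\A_1^n a\> \compo \A_i\!0 = \A_i\!a$; since $\pi_i = \A_i\!0$ this is just $x \bullet \pi_i = x$, so $\pi_i$ is the top, and substituting $a \coloneqq \pi_i$ and using \eqref{zero_n} then yields $\A_i\!\pi_i = 0$, so $0$ is an $\A_i$-element. That $\A_i$ restricts to an involution, $\A_i\!\A_i\!x = x$, is the $i = j$ case of $\D_j\!\A_i\!a = \A_j\!a$, the consequence of \eqref{switch} recorded after \Cref{first_lemma}. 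For the complementation law $x \bullet \A_i\!x = 0$ (with $x = \A_i\!a$): one pass of the restricted twisted law rewrites $\<\A_1^n a\> \compo \A_i\!\A_i\!a$ as $\<\A_1^n(\<\A_1^n a\> \compo \A_i\!a)\> \compo \A_i\!a$, which by \eqref{didempotent} and \eqref{hide} equals $\<\D_1^n a\> \compo \A_i\!a$, and this is $0$ by \eqref{aux_zero}; the bottom law $x \bullet 0 = 0$ comes out the same way, with $\A_i\!\pi_i$ in place of $\A_i\!\A_i\!a$.

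Next I would show $\A_i$ is order-reversing, which upgrades the bounded meet-semilattice to a bounded lattice with join $+_i$ and makes $x \bullet \A_i\!x = 0$ together with its De Morgan dual $x +_i \A_i\!x = \pi_i$ the assertion that $\A_i$ is a complementation. This is where \eqref{quasi} is used: on $\A_i$-elements it says that $u \bullet x = u \bullet y$ and $\A_i\!u \bullet x = \A_i\!u \bullet y$ imply $x = y$. If $x \le y$ then associativity and $y \bullet \A_i\!y = 0$ give $x \bullet \A_i\!y = (x \bullet y) \bullet \A_i\!y = 0$, so $\A_i\!y$ and $\A_i\!y \bullet \A_i\!x$ have the same meet with $x$ (namely $0$) and, trivially, the same meet with $\A_i\!x$; the cancellation then gives $\A_i\!y = \A_i\!y \bullet \A_i\!x$, i.e. $\A_i\!y \le \A_i\!x$.

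The remaining point is distributivity, which I expect to be the main obstacle. I would first derive from the restricted twisted law the Boolean identity $x \bullet \A_i(x \bullet \A_i\!z) = x \bullet z$ for all $\A_i$-elements $x, z$ — in the complemented lattice above this reads $x \bullet (\A_i\!x +_i z) = x \bullet z$. Indeed, writing $x = \A_i\!a$ and letting $c$ be an arbitrary $\A_i$-element, the $i$th restricted twisted law rewrites $\<\A_1^n a\> \compo \A_i\!c$ as $\<\A_1^n(\<\A_1^n a\> \compo c)\> \compo \A_i\!a = \A_i(x \bullet c) \bullet x$, so $x \bullet \A_i\!c = x \bullet \A_i(x \bullet c)$; specialising $c \coloneqq \A_i\!z$ and using $\A_i\!\A_i\!z = z$ gives the identity. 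A complemented lattice with involutive De Morgan complement satisfying this identity is distributive, hence Boolean; equivalently, one may invoke the standard equational characterisation of Boolean algebras in the signature $(\bullet, \A_i, 0, \pi_i)$ by the meet-semilattice axioms, $\A_i\!\A_i\!x = x$, the bounds, $x \bullet \A_i\!x = 0$, and the displayed identity. The care needed is concentrated in that identity: the unrestricted twisted law fails for $n \ge 2$, so one must work through the restricted form \eqref{restricted} and use \eqref{hide} to absorb the extra domain factors; everything downstream is routine.
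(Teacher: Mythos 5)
Your algebra-specific derivations are all correct, and you reach the Boolean structure by a genuinely different route from the paper. The paper fixes the citable axiomatisation dual to the one in \cite{bygames}: the complement axioms $\A_i\!\A_i\!\alpha=\alpha$, $\A_i\!\alpha\bullet\alpha=0$, $\A_i\!0=\pi_i$ come at once from \eqref{switch}, \eqref{zero_n} and the definition of $\pi_i$, and the bulk of its proof verifies the distributivity law $\alpha+\beta\bullet\gamma=(\alpha+\beta)\bullet(\alpha+\gamma)$, where $\alpha+\beta\coloneqq\A_i(\A_i\!\alpha\bullet\A_i\!\beta)$, by one appeal to the cancellation quasiequation \eqref{quasi}, composing both sides with $\<\D_1^n\!\alpha\>$ and $\<\A_1^n\!\alpha\>$. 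You instead invoke \eqref{quasi} only to make $\A_i$ order reversing, and shift the weight onto the purely equational identity $x\bullet\A_i(x\bullet\A_i\!z)=x\bullet z$, which you extract correctly from the restricted twisted law \eqref{restricted} together with \eqref{hide}; your treatment of the bounds, the involution and $x\bullet\A_i\!x=0$ is also fine, and your specialisation of \eqref{quasi} to $\A_i$-elements is legitimate. What your route buys is a clean separation: one use of \eqref{quasi}, then pure lattice reasoning. What it costs is that the closing step is not self-contained: the assertion that a bounded meet-semilattice with an involution satisfying $x\bullet\A_i\!x=0$, $\A_i\!0=\pi_i$ and your identity is a Boolean algebra is true, but it is neither the axiomatisation the paper cites nor a standard one-line characterisation, so declaring everything downstream `routine' hides the only remaining content of the lemma.

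To close the argument you should either give a precise reference (your identity forces orthomodularity and says that every pair of elements commutes, and an orthomodular lattice in which every pair commutes is Boolean) or supply the short proof, for instance: dualise the identity to $u+(\A_i\!u\bullet v)=u+v$; deduce $\alpha=(\alpha\bullet\beta)+(\alpha\bullet\A_i\!\beta)$ by taking $u=\alpha\bullet\beta$, $v=\alpha$ and applying the identity once more; then, with $\delta\coloneqq(\alpha\bullet\beta)+(\alpha\bullet\gamma)$, two further applications of the identity together with antitonicity give $\alpha\bullet\A_i\!\delta\le\A_i\!\beta\bullet\A_i\!\gamma$, whence $\alpha\bullet(\beta+\gamma)\bullet\A_i\!\delta=0$ and, by the displayed decomposition, $\alpha\bullet(\beta+\gamma)\le\delta$, which is the nontrivial half of distributivity. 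Alternatively, and closest to the paper, you could use your identity to verify the paper's distributivity axiom directly. With any of these supplied, your proof is complete.
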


\begin{proof}
We already know, by \Cref{semilattice}, that the $\A_i$-elements form a semilattice. Equation \eqref{pi} says that $\pi_i$ is the top element of the semilattice. We want to show that $0$ is an $\A_i$-element, then both \eqref{nowhere} and \eqref{right} independently say that $0$ is the bottom element of the semilattice. This is easy: $\A_i\!\pi_i = \A_i\!\pi_i \bullet \pi_i = \<\A_1^n\!\pi_i\> \compo \pi_i =0$.

To complete the proof that we have a Boolean algebra we use the dual of the axiomatisation of Boolean algebras given, for example, in \cite[Definition 2.3]{bygames}. Let $\alpha + \beta$ abbreviate $\A_i(\A_i\!\alpha \bullet \A_i\!\beta)$. We need
\begin{align*}
\intertext{complement axioms:}
\A_i\!\A_i\!\alpha &= \alpha\\
\A_i\!\alpha \bullet \alpha &= 0\\
\A_i\!0 &= \pi_i
\intertext{and distributivity:}
\alpha + \beta \bullet \gamma &= (\alpha + \beta) \bullet (\alpha + \gamma)
\end{align*}
where Greek letters denote arbitrary $\A_i$-elements. 

The first complement axiom follows from \eqref{switch}, the second is \eqref{zero_n} and the third is true by definition. The distributivity axiom expands to
\begin{equation*}
\A_i(\A_i\!\alpha \bullet \A_i(\beta \bullet \gamma)) = \A_i(\A_i\!\alpha \bullet \A_i\!\beta) \bullet \A_i(\A_i\!\alpha \bullet \A_i\!\gamma)
\end{equation*}
and we prove this using \eqref{quasi}. We have by applying the $i$th twisted law
\begin{align*}
\<\D_1^n\!\alpha\> \compo \A_i(\A_i\!\alpha \bullet \A_i(\beta \bullet \gamma)) &=  \<\A_1^n(\D_i\!\alpha \bullet \A_i\!\alpha \bullet \A_i(\beta \bullet \gamma))\> \compo \D_i\!\alpha\\
&= \<\A_1^n\!0\> \compo \D_i\!\alpha\\
&= \boldsymbol{\pi} \compo \D_i\!\alpha\\
&= \D_i\alpha\\
&= \alpha
\intertext{and again using the $i$th twisted law}
\<\D_1^n\!\alpha\> \compo (\A_i(\A_i\!\alpha \bullet \A_i\!\beta) \bullet \A_i(\A_i\!\alpha \bullet \A_i\!\gamma)) &= \D_i\!\alpha \bullet \A_i(\A_i\!\alpha \bullet \A_i\!\beta) \bullet \A_i(\A_i\!\alpha \bullet \A_i\!\gamma)\\
&=  \A_i\!0 \bullet \D_i\!\alpha \bullet \A_i(\A_i\!\alpha \bullet \A_i\!\gamma)\\
&=  \A_i\!0 \bullet\A_i\!0 \bullet \D_i\!\alpha\\
&= \alpha
\intertext{and we have}
\<\A_1^n\!\alpha\> \compo \A_i(\A_i\!\alpha \bullet \A_i(\beta \bullet \gamma)) &= \A_i\!\alpha \bullet \A_i(\A_i\!\alpha \bullet \A_i(\beta \bullet \gamma))\\
&=  \A_i(\A_i\!\alpha \bullet \A_i(\beta \bullet \gamma)) \bullet \A_i\!\alpha\\
&= \A_i\!\alpha \bullet \A_i\!\A_i(\beta \bullet \gamma)\\
&= \A_i\!\alpha \bullet \D_i(\beta \bullet \gamma)\\
&= \A_i\!\alpha \bullet \beta \bullet \gamma\\
\intertext{and}
\<\A_1^n\!\alpha\> \compo (\A_i(\A_i\!\alpha \bullet \A_i\!\beta) \bullet \A_i(\A_i\!\alpha \bullet \A_i\!\gamma)) &= \A_i\!\alpha \bullet \A_i(\A_i\!\alpha \bullet \A_i\!\beta) \bullet \A_i(\A_i\!\alpha \bullet \A_i\!\gamma)\\
&= \A_i(\A_i\!\alpha \bullet \A_i\!\gamma) \bullet \A_i(\A_i\!\alpha \bullet \A_i\!\beta) \bullet \A_i\!\alpha\\
&= \A_i(\A_i\!\alpha \bullet \A_i\!\gamma)  \bullet \A_i\!\alpha \bullet \A_i\!\A_i\!\beta\\
&= \A_i\!\alpha \bullet \A_i\!\A_i\!\gamma \bullet \A_i\!\A_i\!\beta\\
&= \A_i\!\alpha \bullet \D_i\!\gamma \bullet \D_i\!\beta\\
&= \A_i\!\alpha \bullet \beta \bullet \gamma
\end{align*}
giving the result.
\end{proof}

We know that the map $\theta_{ji} : \A_i\!a \mapsto \A_j\!a$ is well defined for every $i$ and $j$. Hence it is a bijection from the $\A_i$-elements to the $\A_j$-elements. 
Then
\begin{align*}
\theta_{ji}(\A_i\!\A_i\!a) &= \A_j\!\A_i\!a && \text{by definition of $\theta_{ji}$}\\
 &= \A_j\!\A_j\!a &&\text{by \eqref{hide}}\\
 &= \A_j\!\theta_{ji}(\A_i\!a) && \text{by definition of $\theta_{ji}$}
\intertext{and}
\theta_{ji}(\A_i\!a \bullet_i \A_i\!b) &= \theta_{ji}(\D_i(\A_i\!a \bullet_i \A_i\!b)) && \text{by \eqref{switch}} \\
&=\theta_{ji}(\A_i\!\A_i(\A_i\!a \bullet_i \A_i\!b)) && \text{by definition of $\D_i$} \\
&= \A_j(\A_i(\A_i\!a \bullet_i \A_i\!b)) && \text{by definition of $\theta_{ji}$}\\
&= \D_j(\A_i\!a \bullet_i \A_i\!b) && \text{by \eqref{hide}}\\
&= \A_j\!a \bullet_j \A_j\!b && \text{by \eqref{switch}}\\
&= \theta_{ji}(\A_i\!a) \bullet_j \theta_{ji}(\A_i\!b) &&\text{by definition of $\theta_{ji}$}
\end{align*}
and so $\theta_{ji}$ is an isomorphism of the Boolean algebras.

Notice that the collection $(\theta_{ji})$ of Boolean algebra isomorphisms commute, that is, each $\theta_{ii}$ is the identity and $\theta_{kj} \circ \theta_{ji} = \theta_{ki}$ for all $i, j$ and $k$. Hence we may fix a representative of the isomorphism class of these Boolean algebras and fix isomorphisms to the Boolean algebras that commute with the isomorphisms $\theta_{ji}$. For definiteness we will use the $\A_1$-elements as the representative Boolean algebra. Then for each $i$ the isomorphism to the $\A_i$-elements will be $\theta_{i1}$.

We will refer to elements of the representative Boolean algebra as $\A$-elements and use Greek letters to denote arbitrary $\A$-elements. If $\alpha$ is an $\A$-element then $\A\!\alpha$ is the complement of $\alpha$ within the Boolean algebra of $\A$-elements, $\boldsymbol{\alpha}$ is shorthand for $\<\alpha_1, \ldots, \alpha_n\>$, consisting of the images of $\alpha$ in the algebras of $\A_i$-elements and $\compl{\boldsymbol{\alpha}}$ is shorthand for $\<\A_1\!\alpha_1, \ldots, \A_n\!\alpha_n\>$, consisting of the images of $\A\!\alpha$.

\begin{lemma}\label{lemma:domaintwisted}
The following quasiequations are consequences of axioms \eqref{super}--\,\eqref{hide}.
\begin{align}\label{domainpreserves}
\<\D_1^n(\boldsymbol{a} \compo b)\> \compo \D_i\!a_j &= \D_i(\boldsymbol{a} \compo b) && \text{for every $i, j$}\\
\label{twisted}
{\vec a} \compo \D_i\!b &= \< \D_1^n({\vec a} \compo b)\> \compo a_i &&\text{for every }i
\intertext{\begin{flushright}(the \defn{twisted laws for domain})\quad\phantom{.}\end{flushright}}
\label{last}
\boldsymbol{a} \compo \A_i\!b = 0 \;\;&{\limplies}\;\; \boldsymbol{a} \compo \A_j\!b = 0 &&\text{for every }i, j\\
\label{sum}
\boldsymbol{\alpha} \compo a = \boldsymbol{\alpha} \compo b \;\;\wedge\;\; \boldsymbol{\beta} \compo a = \boldsymbol{\beta} \compo b \;\;&{\limplies}\;\; (\boldsymbol{\alpha} + \boldsymbol{\beta}) \compo a = (\boldsymbol{\alpha} + \boldsymbol{\beta}) \compo b
\end{align}
where $+$ is the Boolean sum and we have extended notation componentwise to sequences.
\end{lemma}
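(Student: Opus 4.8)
The plan is to prove the four statements in the listed order, since \eqref{domainpreserves} (an equation) is needed for \eqref{twisted}, \eqref{twisted} is then applied via the quasiequation \eqref{quasi}, \eqref{last} is a quick corollary of \eqref{twisted}, and \eqref{sum} follows essentially independently, again through \eqref{quasi}.

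For \eqref{domainpreserves}: using \eqref{hide} to rewrite $\<\D_1^n(\boldsymbol a\compo b)\>$ as $\<\A_1^n(\A_1(\boldsymbol a\compo b))\>$, and recalling from \Cref{boolean} that on the $\A_i$-elements $\bullet$ is meet and $\A_i$ is complementation, the claimed equation becomes the Boolean-algebra inequality $\D_i(\boldsymbol a\compo b)\leq\D_i a_j$, equivalently (complementing) $\A_i a_j\leq\A_i(\boldsymbol a\compo b)$, equivalently the equation $\<\A_1^n a_j\>\compo\A_i(\boldsymbol a\compo b)=\A_i a_j$. I would prove this last equation by a single application of the restricted twisted law \eqref{restricted}, which rewrites its left side as $\<\A_1^n(\<\A_1^n a_j\>\compo(\boldsymbol a\compo b))\>\compo\A_i a_j$; here the inner term $\<\A_1^n a_j\>\compo(\boldsymbol a\compo b)$ equals $0$, since by superassociativity its $j$th coordinate is $\<\A_1^n a_j\>\compo a_j=0$ by \eqref{zero_n} and then \eqref{nowhere} applies; finally $\<\A_1^n 0\>=\boldsymbol\pi$ and \eqref{pi} finish it.

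For \eqref{twisted}: write $c\coloneqq\boldsymbol a\compo b$, $u\coloneqq\boldsymbol a\compo\D_i b$, $v\coloneqq\<\D_1^n c\>\compo a_i$, and apply \eqref{quasi} with distinguished element $c$. On the antidomain side, $\<\A_1^n c\>\compo\<\D_1^n c\>$ is the all-zero sequence (each coordinate is a meet of an element with its complement, by \Cref{boolean}), so $\<\A_1^n c\>\compo v=0$ via \eqref{nowhere}; and putting $a''_k\coloneqq\<\A_1^n c\>\compo a_k$, superassociativity gives $\<\A_1^n c\>\compo u=\boldsymbol{a''}\compo\D_i b$ and $\boldsymbol{a''}\compo b=\<\A_1^n c\>\compo c=0$ by \eqref{zero_n}, whereupon two applications of \eqref{anti-twisted} (to $\boldsymbol{a''}\compo\A_i b$, then to $\boldsymbol{a''}\compo\A_i\A_i b$), together with \eqref{pi}, reduce $\boldsymbol{a''}\compo\D_i b$ to an expression of the shape $\<\A_1^n d\>\compo d$, which is $0$ by \eqref{zero_n}. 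On the domain side, $\<\D_1^n c\>$ is idempotent as a left multiplier, so $\<\D_1^n c\>\compo v=v$; putting $a'_k\coloneqq\<\D_1^n c\>\compo a_k$, superassociativity and \eqref{newdomain} give $\boldsymbol{a'}\compo b=c$, so \eqref{domainpreserves} applied to both $c=\boldsymbol a\compo b$ and $c=\boldsymbol{a'}\compo b$ forces $\D_j a'_k=\D_j c$ for all $j,k$; hence each tail $\<\D_1^n a'_k\>$ collapses to $\<\D_1^n c\>$, and feeding this into \eqref{anti-twisted} shows first $\boldsymbol{a'}\compo\A_i b=\<\A_1^n c\>\compo\<\D_1^n c\>\compo a'_i=0$ and then $\boldsymbol{a'}\compo\D_i b=\boldsymbol\pi\compo\<\D_1^n c\>\compo a'_i=v$, using \eqref{pi} and idempotence. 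Since $\<\D_1^n c\>\compo u=\boldsymbol{a'}\compo\D_i b$ by superassociativity, both antecedents of \eqref{quasi} hold and $u=v$.

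For \eqref{last}: recall from the discussion following \Cref{first_lemma} (namely \eqref{switch} with $a=0$) that $\D_j\A_i b=\A_j b$, so $\boldsymbol a\compo\A_j b=\boldsymbol a\compo\D_j(\A_i b)=\<\D_1^n(\boldsymbol a\compo\A_i b)\>\compo a_j$ by the just-proved \eqref{twisted}; if $\boldsymbol a\compo\A_i b=0$ then, since $\D_k 0=\A_k\A_k 0=\A_k\pi_k=0$ (the last equality from the proof of \Cref{boolean}), the sequence $\<\D_1^n 0\>$ is all $0$ and \eqref{nowhere} gives $\boldsymbol a\compo\A_j b=0$. For \eqref{sum}: apply \eqref{quasi} with distinguished element $\alpha$, so that $\<\D_1^n\alpha\>=\boldsymbol\alpha$ and $\<\A_1^n\alpha\>=\compl{\boldsymbol\alpha}$ in the fixed notation. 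On the $\boldsymbol\alpha$ side, superassociativity plus absorption in each $\A_k$-algebra ($\alpha_k\bullet(\alpha_k+\beta_k)=\alpha_k$) collapses $\boldsymbol\alpha\compo((\boldsymbol\alpha+\boldsymbol\beta)\compo a)$ to $\boldsymbol\alpha\compo a$, which is $\boldsymbol\alpha\compo b$ by hypothesis, and symmetrically for $b$; on the $\compl{\boldsymbol\alpha}$ side, the same manipulation with $\A_k\alpha_k\bullet(\alpha_k+\beta_k)=\A_k\alpha_k\bullet\beta_k$ collapses $\compl{\boldsymbol\alpha}\compo((\boldsymbol\alpha+\boldsymbol\beta)\compo a)$ to $\compl{\boldsymbol\alpha}\compo(\boldsymbol\beta\compo a)=\compl{\boldsymbol\alpha}\compo(\boldsymbol\beta\compo b)$ by the other hypothesis, and symmetrically, so \eqref{quasi} delivers $(\boldsymbol\alpha+\boldsymbol\beta)\compo a=(\boldsymbol\alpha+\boldsymbol\beta)\compo b$. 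The main obstacle will be the bookkeeping in \eqref{twisted}: \eqref{anti-twisted} introduces the long tail $\<\D_1^n a_1\>\compo\dots\compo\<\D_1^n a_n\>$ of domain sequences, and the argument works precisely because, after restricting $\boldsymbol a$ to the domain of $\boldsymbol a\compo b$, \eqref{domainpreserves} forces every one of these tails to telescope to the single sequence $\<\D_1^n(\boldsymbol a\compo b)\>$ — which is why \eqref{domainpreserves} must be in place first.
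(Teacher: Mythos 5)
Your treatments of \eqref{domainpreserves}, \eqref{last} and \eqref{sum} are correct, and the first and last essentially coincide with the paper's own arguments (for \eqref{last} you are in fact slightly more direct: the paper computes $\<\D_1^n(\boldsymbol{a}\compo\A_j\!b)\>\compo a_i=0$ and then uses \eqref{newdomain} and \eqref{nowhere}, whereas you go straight through $\boldsymbol{a}\compo\A_j\!b=\boldsymbol{a}\compo\D_j(\A_i\!b)=\<\D_1^n(\boldsymbol{a}\compo\A_i\!b)\>\compo a_j$ and $\D_k\!0=0$; both are fine). For \eqref{twisted} you take a genuinely different route: the paper first proves the auxiliary identity \eqref{auxiliary}, $\<\D_1^n\!c\>\compo d=\<\A_1^n(\<\A_1^n\!c\>\compo d)\>\compo d$, via \eqref{quasi}, and then obtains \eqref{twisted} by a purely equational computation (two applications of \eqref{anti-twisted}, then \eqref{auxiliary}, then collapsing the tail with superassociativity and \eqref{domainpreserves}); you instead apply \eqref{quasi} at top level with distinguished element $c=\boldsymbol{a}\compo b$ and check the four compositions. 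Your antidomain-side computation (two applications of \eqref{anti-twisted} reducing $\boldsymbol{a''}\compo\D_i\!b$ to something of the form $\<\A_1^n\!e\>\compo e$) is a nice observation and is correct. The trade-off is that the paper isolates a reusable identity and keeps the final derivation equational, while your version dispenses with \eqref{auxiliary} at the cost of a longer case analysis inside a single use of \eqref{quasi}.

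There is, however, one under-justified step on the domain side of \eqref{twisted}: you claim that \eqref{domainpreserves} applied to the two decompositions $c=\boldsymbol{a}\compo b$ and $c=\boldsymbol{a'}\compo b$ forces $\D_j\!a'_k=\D_j\!c$. Those two instances only yield $\<\D_1^n\!c\>\compo\D_j\!a'_k=\D_j\!c$ and $\<\D_1^n\!c\>\compo\D_j\!a_k=\D_j\!c$, i.e.\ the inequality $\D_j\!c\le\D_j\!a'_k$ in the Boolean algebra of $\A_j$-elements; they give no upper bound on $\D_j\!a'_k$, so the asserted equality does not follow from them alone. The equality is true and derivable, but you need a further instance of \eqref{domainpreserves}, applied to the decomposition $a'_k=\<\D_1^n\!c\>\compo a_k$ itself, which gives $\D_j\!a'_k\le\D_j(\D_m\!c)$, together with $\D_j\!\D_m\!c=\D_j\!c$ (from $\D_j\!\A_i\!x=\A_j\!x$, noted after \Cref{first_lemma}, and \eqref{hide}). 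Alternatively you can avoid the equality entirely by keeping the outer factor: since $\<\D_1^n\!c\>$ is idempotent as a left multiplier, $\<\D_1^n\!c\>\compo u=\<\D_1^n\!c\>\compo(\boldsymbol{a'}\compo\D_i\!b)$, and after your two applications of \eqref{anti-twisted} the retained prefix $\<\D_1^n\!c\>$ absorbs each tail factor $\<\D_1^n\!a'_k\>$ using only the inequality $\<\D_1^n\!c\>\compo\D_j\!a'_k=\D_j\!c$ that you did establish, so the tail still collapses to $\<\D_1^n\!c\>\compo a_i=v$. With either repair your argument is complete.
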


\begin{proof}
\Cref{domainpreserves} is the statement that $\D(\boldsymbol{a} \compo b) \leq \D\!a_j$ within the Boolean algebra of $\A$-elements. This is equivalent to $\A(\boldsymbol{a} \compo b) \geq \A\!a_j$, that is $\<\A_1^n\!a_j\> \compo \A_1(\boldsymbol{a} \compo b) = \A_1\!a_j$. This is true, for
\begin{align*}
&\quad\,\,\<\A_1^n\!a_j\> \compo \A_1(\boldsymbol{a} \compo b) \\
&\myeq{t} \<\A_1^n(\<\A_1^n\!a_j\> \compo \boldsymbol{a} \compo b)\> \compo \A_1\!a_j \\
&\myeq{s} \<\A_1^n(\<\<\A_1^n\!a_j\> \compo a_1, \ldots, \<\A_1^n\!a_j\> \compo a_n\> \compo b)\> \compo \A_1\!a_j\\
&\myeq{\ref{zero_n}} \<\A_1^n(\<\<\A_1^n\!a_j\> \compo a_1, \ldots, \<\A_1^n\!a_j\> \compo a_{j-1}, 0, \<\A_1^n\!a_j\> \compo a_{j+1}, \ldots, \<\A_1^n\!a_j\> \compo a_n\> \compo b)\> \compo \A_1\!a_j\\
&\myeq{\ref{nowhere}} \<\A_1^n\!0\> \compo \A_1\!a_j\\
&=\boldsymbol{\pi} \compo \A_1\!a_j\\
&\myeq{\ref{pi}}\A_1\!a_j
\end{align*}
and so \eqref{domainpreserves} is valid.

In order to prove the twisted laws for domain we first prove
\begin{equation}\label{auxiliary}
\<\D_1^n\!c\> \compo d = \<\A_1^n(\<\A_1^n\!c\> \compo d)\> \compo d
\end{equation}
and we do this by an application of \eqref{quasi}. We have
\begin{align*}
\<\D_1^n\!c\> \compo (\<\D_1^n\!c\> \compo d) &= \<\D_1^n\!c\> \compo d
\intertext{and}
\<\D_1^n\!c\> \compo (\<\A_1^n(\<\A_1^n\!c\> \compo d)\> \compo d) &\myeq{s} \<\<\D_1^n\!c\> \compo \A_1(\<\A_1^n\!c\> \compo d), \ldots, \<\D_1^n\!c\> \compo \A_n(\<\A_1^n\!c\> \compo d)\> \compo d\\
&\myeq{t}\<\<\A_1^n(\<\D_1^n\!c\> \compo \<\A_1^n\!c\> \compo d)\> \compo \D_1\!c, \ldots\> \compo d\\
&\myeq{\phantom{s}} \<\<\A_1^n\!0\> \compo \D_1\!c, \ldots, \<\A_1^n\!0\> \compo \D_n\!c\> \compo d\\
&\myeq{\phantom{s}} \<\boldsymbol{\pi} \compo \D_1\!c, \ldots, \boldsymbol{\pi} \compo \D_n\!c\> \compo d\\
&\myeq{\ref{pi}} \<\D_1^n\!c\> \compo d
\intertext{and we also have}
\<\A_1^n\!c\> \compo (\<\D_1^n\!c\> \compo d) &= 0
\intertext{and}
\<\A_1^n\!c\> \compo (\<\A_1^n(\<\A_1^n\!c\> \compo d)\> \compo d) &\myeq{\ref{morecommutative}} \<\A_1^n(\<\A_1^n\!c\> \compo d)\> \compo \<\A_1^n\!c\> \compo d\\
&\myeq{\ref{zero_n}} 0
\end{align*}
giving us what we require to deduce \eqref{auxiliary}.

Now to deduce the $i$th twisted law for domain, firstly $\boldsymbol{a} \compo \D_i\!b = \boldsymbol{a} \compo \A_i\!\A_i\!b$ by the definition of $\D_i$. Applying the $i$th twisted law for antidomain to the right-hand side we get
\begin{equation*}
\< \A_1^n({\vec a} \compo \A_i\!b)\> \compo \<\D_1^n\!a_1\> \compo \ldots \compo \<\D_1^n\!a_n\> \compo a_i
\end{equation*}
then by applying the $i$th twisted law for antidomain again this equals
\begin{equation*}
 \< \A_1^n(\< \A_1^n({\vec a} \compo b)\> \compo \<\D_1^n\!a_1\> \compo \ldots \compo \<\D_1^n\!a_n\> \compo a_i)\> \compo \<\D_1^n\!a_1\> \compo \ldots \compo \<\D_1^n\!a_n\> \compo a_i
\end{equation*}
and by setting $c=\boldsymbol{a} \compo b$ and $d= \<\D_1^n\!a_1\> \compo \ldots \compo \<\D_1^n\!a_n\> \compo a_i$ in \eqref{auxiliary}, this is equal to
\begin{equation*}
\< \D_1^n({\vec a} \compo b)\> \compo \<\D_1^n\!a_1\> \compo \ldots \compo \<\D_1^n\!a_n\> \compo a_i
\end{equation*}
and this equals $\< \D_1^n({\vec a} \compo b)\> \compo a_i$ by repeated application of superassociativity and \eqref{domainpreserves}.

For \eqref{last}, suppose $\boldsymbol{a} \compo \A_i\!b = 0$. Then
\begin{align*}
\<\D_1^n(\boldsymbol{a} \compo \A_j\!b)\> \compo a_i &= \boldsymbol{a} \compo \D_i\!\A_j\!b && \text{by the $i$th twisted law for domain}\\
&= \boldsymbol{a} \compo \A_i\!\A_i\!\A_j\!b && \text{by the definition of $\D_i$}\\
&= \boldsymbol{a} \compo \A_i\!\A_i\!\A_i\!b && \text{by \eqref{hide}}\\
&= \boldsymbol{a} \compo \A_i\!b && \text{as $\A_i$ is complement on the $\A_i$-elements}\\
&= 0 && \text{by assumption}
\end{align*}
and so
\begin{align*}
\boldsymbol{a} \compo \A_j\!b 
&\myeq{\ref{newdomain}} \<\D_1^n(\boldsymbol{a} \compo \A_j\!b)\> \compo \boldsymbol{a} \compo \A_j\!b\\
&\myeq{s} \<\<\D_1^n(\boldsymbol{a} \compo \A_j\!b)\> \compo a_1, \ldots, \<\D_1^n(\boldsymbol{a} \compo \A_j\!b)\> \compo a_n\> \compo \A_j\!b \\
&= \< \ldots, \<\D_1^n\!(\boldsymbol{a} \compo \A_j\!b)\> \compo a_{i-1}, 0, \<\D_1^n\!(\boldsymbol{a} \compo \A_j\!b)\> \compo a_{i+1}, \ldots\> \compo \A_j\!b \\
&\myeq{\ref{nowhere}} 0 
\end{align*}
hence \eqref{last} holds.

For \eqref{sum}, suppose $\boldsymbol{\alpha} \compo a = \boldsymbol{\alpha} \compo b$ and $\boldsymbol{\beta} \compo a = \boldsymbol{\beta} \compo b$. Then by Boolean reasoning and the assumptions
\begin{align*}
\boldsymbol{\alpha} \compo (\boldsymbol{\alpha} + \boldsymbol{\beta}) \compo a &= \boldsymbol{\alpha} \compo a\\
&= \boldsymbol{\alpha} \compo b\\
&= \boldsymbol{\alpha} \compo (\boldsymbol{\alpha} + \boldsymbol{\beta}) \compo b
\intertext{and}
\compl{\boldsymbol{\alpha}} \compo (\boldsymbol{\alpha} + \boldsymbol{\beta}) \compo a &= \compl{\boldsymbol{\alpha}} \compo \boldsymbol{\beta} \compo a\\
&= \compl{\boldsymbol{\alpha}} \compo \boldsymbol{\beta} \compo b\\
&= \compl{\boldsymbol{\alpha}} \compo (\boldsymbol{\alpha} + \boldsymbol{\beta}) \compo b
\end{align*}
so \eqref{sum} follows, by \eqref{quasi}.
\end{proof}

Write $a \leq b$ to mean $\<\D_1^n(a)\> \compo b = a$.

\begin{lemma}\label{lemma:order}
It follows from \eqref{super}--\,\eqref{hide} that the relation $\leq$ is a partial order and with respect to this order $\<\phantom{a}\> \compo$ is order preserving in each of its arguments.
\end{lemma}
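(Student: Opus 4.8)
The plan is to check, in this order, that $\leq$ is reflexive, transitive and antisymmetric, and then that $\<\phantom{a}\>\compo$ is monotone separately in each of its $n+1$ argument positions. Reflexivity requires nothing new: $a \leq a$ unfolds to exactly equation \eqref{newdomain}.

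For transitivity and for one of the monotonicity cases I would first isolate the auxiliary equation $\<\D_1^n\!a\> \compo \D_i\!b = \D_i\!a$, valid for every $i$ whenever $a \leq b$. To get it, apply the $i$th twisted law for domain \eqref{twisted} with $\boldsymbol{a} := \<\D_1^n\!a\>$, rewriting $\<\D_1^n\!a\> \compo \D_i\!b$ as $\<\D_1^n(\<\D_1^n\!a\> \compo b)\> \compo \D_i\!a$; substitute the hypothesis $\<\D_1^n\!a\> \compo b = a$; and finish using $\<\D_1^n\!a\> \compo \D_i\!a = \D_i\!a$, which holds because \eqref{hide} lets one rewrite $\<\D_1^n\!a\>$ as $\<\A_1^n(\A_i\!a)\>$, whereupon \eqref{didempotent} applies with $\A_i\!a$ in the role of the generic element. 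Transitivity is then quick: from $a \leq b$ and $b \leq c$ one gets $a = \<\D_1^n\!a\> \compo \<\D_1^n\!b\> \compo c$, superassociativity spreads the outer tuple over the inner one, the auxiliary equation turns each $\<\D_1^n\!a\> \compo \D_i\!b$ into $\D_i\!a$, and what remains is $a = \<\D_1^n\!a\> \compo c$, i.e.\ $a \leq c$. Antisymmetry is shorter still: substituting $b = \<\D_1^n\!b\> \compo a$ into $a = \<\D_1^n\!a\> \compo b$ gives $a = \<\D_1^n\!a\> \compo \<\D_1^n\!b\> \compo a$; since by \eqref{hide} every $\<\D_1^n\!x\>$ has the form $\<\A_1^n(\A_1\!x)\>$, equation \eqref{morecommutative} swaps the two domain tuples, and then \eqref{newdomain} followed by the hypothesis $b = \<\D_1^n\!b\> \compo a$ collapses the right-hand side to $b$.

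For order-preservation, the argument in the final coordinate runs as follows. Assuming $g \leq h$, rewrite $\boldsymbol{a} \compo g$ as $\boldsymbol{a} \compo (\<\D_1^n\!g\> \compo h)$, push $\boldsymbol{a}$ inside by superassociativity, convert each $\boldsymbol{a} \compo \D_i\!g$ into $\<\D_1^n(\boldsymbol{a} \compo g)\> \compo a_i$ via the twisted laws for domain \eqref{twisted}, and pull the common tuple back out by superassociativity, arriving at $\boldsymbol{a} \compo g = \<\D_1^n(\boldsymbol{a} \compo g)\> \compo (\boldsymbol{a} \compo h)$, which is precisely $\boldsymbol{a} \compo g \leq \boldsymbol{a} \compo h$. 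For a coordinate $k$ of the tuple, with all other components and $g$ fixed, take $a \leq b$ and two tuples $\boldsymbol{c}, \boldsymbol{c}'$ agreeing off position $k$ with $c_k = a$ and $c_k' = b$. Expanding $\boldsymbol{c} \compo g = \<\D_1^n(\boldsymbol{c} \compo g)\> \compo (\boldsymbol{c} \compo g)$ (equation \eqref{newdomain}) by superassociativity, the $k$th slot is $\<\D_1^n(\boldsymbol{c} \compo g)\> \compo a = \<\D_1^n(\boldsymbol{c} \compo g)\> \compo \<\D_1^n\!a\> \compo b$, which by superassociativity and the instance of \eqref{domainpreserves} with $c_k$ in the role of $a_j$ (giving $\<\D_1^n(\boldsymbol{c} \compo g)\> \compo \D_i\!c_k = \D_i(\boldsymbol{c} \compo g)$) reduces to $\<\D_1^n(\boldsymbol{c} \compo g)\> \compo b = \<\D_1^n(\boldsymbol{c} \compo g)\> \compo c_k'$; the other slots are untouched since $c_i = c_i'$. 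Reassembling by superassociativity yields $\boldsymbol{c} \compo g = \<\D_1^n(\boldsymbol{c} \compo g)\> \compo (\boldsymbol{c}' \compo g)$, that is, $\boldsymbol{c} \compo g \leq \boldsymbol{c}' \compo g$.

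I expect the only step that calls for real care to be the monotonicity in a tuple coordinate, where one must pick out exactly the instance of \eqref{domainpreserves} in which the modified component plays the role of $a_j$: this is what lets the domain restriction $\<\D_1^n(\boldsymbol{c} \compo g)\>$ be carried across the replacement of $a$ by $b$. Everything else is routine rebracketing by superassociativity, supported by \eqref{newdomain}, \eqref{hide}, \eqref{didempotent}, \eqref{morecommutative} and the twisted laws for domain.
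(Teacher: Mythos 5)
Your proof is correct. For the two monotonicity claims your argument is essentially the paper's: the final-argument case uses superassociativity plus the twisted laws for domain \eqref{twisted} in exactly the same way, and the tuple-coordinate case uses \eqref{newdomain}, superassociativity and \eqref{domainpreserves} just as the paper does (the paper changes all $n$ coordinates simultaneously, you change one at a time; the two are interchangeable given transitivity). Where you genuinely diverge is in establishing that $\leq$ is a partial order. The paper proves both antisymmetry and transitivity by appealing to the quasiequation \eqref{quasi}: it computes the four compositions of the two candidate-equal elements with $\<\D_1^n\!a\>$ and $\<\A_1^n\!a\>$ (resp.\ $\<\D_1^n\!b\>$, $\<\A_1^n\!b\>$) and invokes the cancellation. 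You instead give direct equational derivations: antisymmetry via \eqref{morecommutative} (legitimately applied to domain tuples after rewriting $\<\D_1^n\!x\>$ as $\<\A_1^n(\A_1 x)\>$ using \eqref{hide}) and \eqref{newdomain}; transitivity via the auxiliary identity $\<\D_1^n\!a\> \compo \D_i\!b = \D_i\!a$ (which is the same intermediate fact the paper isolates inside its proof of \eqref{orderaux}, derived the same way from \eqref{twisted}) followed by a single superassociativity rearrangement. Your route is shorter and purely calculational at this point in the argument; it does not make the lemma independent of \eqref{quasi} overall, since the twisted laws for domain and \eqref{morecommutative} that you cite are themselves derived using \eqref{quasi} earlier in the paper, but it does show that once those consequences are in hand no further appeal to the cancellation law is needed.
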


\begin{proof}
Reflexivity is just \eqref{newdomain}. For antisymmetry, suppose that $\<\D_1^n\!a\> \compo b = a$ and $\<\D_1^n\!b\> \compo a = b$. Then
\begin{align*}
\<\D_1^n\!a\> \compo a &= a && \text{by \eqref{newdomain}}
\intertext{and}
\<\D_1^n\!a\> \compo b &= a && \text{by assumption}
\intertext{and also}
\<\A_1^n\!a\> \compo a &= 0 && \text{by \eqref{zero_n}}
\intertext{and}
\<\A_1^n\!a\> \compo b &= \<\A_1^n\!a\> \compo \<\D_1^n\!b\> \compo a && \text{by assumption}\\
&= \<\D_1^n\!b\> \compo \<\A_1^n\!a\> \compo a && \text{by \eqref{morecommutative}}\\
&= \<\D_1^n\!b\> \compo 0 && \text{by \eqref{zero_n}}\\
&= 0 && \text{by \eqref{right}}
\end{align*}
and so $a = b$, by an application of \eqref{quasi}.

To prove transitivity, suppose $\<\D_1^n\!a\> \compo b = a$ and $\<\D_1^n\!b\> \compo c = b$. We first claim that 
\begin{align}\label{orderaux}
\<\A_1^n\!b\> \compo \D_i\!a &= 0 &&\text{ for every $i$}
\end{align}
follows from these assumptions. It suffices to show $\D_i\!a = \<\D_1^n\!a\> \compo \D_i\!b$ for every $i$. Observe that
\begin{align*}
&\<\D_1^n\!a\> \compo b = a\\
\implies & \D_j(\<\D_1^n\!a\> \compo b) = \D_j\!a && \text{for every }j\\
\implies & \<\D_1^n(\<\D_1^n\!a\> \compo b)\> \compo \D_i\!a = \<\D_1^n\!a\> \compo \D_i\!a = \D_i\!a && \text{for every }i\text{,}
\end{align*}
but $\<\D_1^n(\<\D_1^n\!a\> \compo b)\> \compo \D_i\!a = \<\D_1^n\!a\> \compo \D_i\!b$ by the $i$th twisted law for domain, establishing that $\D_i\!a = \<\D_1^n\!a\> \compo \D_i\!b$.

To prove transitivity we now use \eqref{quasi} again. We have
\begin{align*}
\<\D_1^n\!b\> \compo (\<\D_1^n\!a\> \compo c) &= \<\D_1^n\!a\> \compo \<\D_1^n\!b\> \compo c && \text{by \eqref{morecommutative}}\\
&= \<\D_1^n\!a\> \compo b && \text{by assumption}\\
&= a && \text{by assumption}
\intertext{and}
\<\D_1^n\!b\> \compo a &= \<\D_1^n\!b\> \compo \<\D_1^n\!a\> \compo b && \text{by assumption}\\
&= \<\D_1^n\!a\> \compo \<\D_1^n\!b\> \compo b && \text{by \eqref{morecommutative}}\\
&= \<\D_1^n\!a\> \compo b && \text{by \eqref{newdomain}}\\
&=a && \text{by assumption}
\intertext{and we have}
\<\A_1^n\!b\> \compo (\<\D_1^n\!a\> \compo c) &= \<\<\A_1^n\!b\> \compo \D_1\!a, \ldots, \<\A_1^n\!b\> \compo \D_n\!a\> \compo c && \text{by superassociativity} \\
&= \boldsymbol{0} \compo c && \text{by \eqref{orderaux}}\\
&= 0 && \text{by \eqref{nowhere}}
\intertext{and}
\<\A_1^n\!b\> \compo a &= \<\A_1^n\!b\> \compo \<\D_1^n\!a\> \compo b && \text{by assumption}\\
&= \<\D_1^n\!a\> \compo \<\A_1^n\!b\> \compo b && \text{by \eqref{morecommutative}}\\
&= \<\D_1^n\!a\> \compo 0 &&\text{by \eqref{zero_n}}\\
&=0 && \text{by \eqref{right}}
\end{align*}
from which we may conclude $\<\D_1^n\!a\> \compo c = a$.

To see that $\<\phantom{a}\>\compo$ is order preserving in its final argument, suppose that $c \leq d$, that is, $\<\D_1^n\!c\> \compo d = c$. Then for an arbitrary $\boldsymbol{a}$ we have
\begin{align*}
\<\D_1^n(\boldsymbol{a} \compo c)\> \compo (\boldsymbol{a} \compo d) &\myeq{s} \<\<\D_1^n(\boldsymbol{a} \compo c)\> \compo a_1, \ldots, \<\D_1^n(\boldsymbol{a} \compo c)\> \compo a_n\> \compo d\\
&\myeq{\ref{twisted}} \<\boldsymbol{a} \compo \D_1\!c, \ldots, \boldsymbol{a} \compo \D_n\!c\> \compo d \\
&\myeq{s} \boldsymbol{a} \compo \<\D_1^n\!c\> \compo d\\
&= \boldsymbol{a} \compo c
\end{align*}
where the last equality holds by the assumption.

To see that $\<\phantom{a}\>\compo$ is order preserving in each of its first $n$ arguments, suppose that $a_i \leq b_i$ for every $i$. That is, $\<\D_1^n\!a_i\> \compo b_i = a_i$ for every $i$. Then
\begin{align*}
\<\D_1^n(\boldsymbol{a} \compo c)\> \compo (\boldsymbol{b} \compo c) &\myeq{s} \<\<\D_1^n(\boldsymbol{a} \compo c)\> \compo b_1, \ldots, \<\D_1^n(\boldsymbol{a} \compo c)\> \compo b_n\> \compo c\\
&= \<\<\D_1^n(\boldsymbol{a} \compo c)\> \compo \<\D_1^n\!a_1\> \compo b_1, \ldots, \<\D_1^n(\boldsymbol{a} \compo c)\> \compo \<\D_1^n\!a_n\> \compo b_n\> \compo c\\
&= \<\<\D_1^n(\boldsymbol{a} \compo c)\> \compo a_1, \ldots, \<\D_1^n(\boldsymbol{a} \compo c)\> \compo a_n\> \compo c\\
&\myeq{s} \<\D_1^n(\boldsymbol{a} \compo c)\> \compo \boldsymbol{a} \compo c\\
&\myeq{\ref{newdomain}} \boldsymbol{a} \compo c
\end{align*}
utilising \eqref{domainpreserves} for the second equality and the assumptions for the third.
\end{proof}

An easy application of laws we have so far shows that the partial order on the entire algebra agrees with the partial orders on each of the embedded Boolean algebras.

Note that
\begin{align}\label{max}
\<\A_1^n\!a\> \compo b = 0 &\limplies \A_i\!a \leq \A_i\!b && \text{for every }i
\end{align}
all hold, for assuming $\<\A_1^n\!a\> \compo b = 0$ gives
\begin{align*}
\<\A_1^n\!a\> \compo \A_i\!b &= \<\A_1^n(\<\A_1^n\!a\> \compo b)\> \compo \A_i\!a&&\text{by the $i$th twisted law}\\
&= \<\A_1^n\!0\> \compo \A_i\!a && \text{by the assumption}\\
&= \boldsymbol{\pi} \compo \A_i\!a && \text{by the definition of $\boldsymbol{\pi}$}\\
&= \A_i\!a && \text{by \eqref{pi}}
\end{align*}
which says that $\A_i\!a \bullet \A_i\!b = \A_i\!a$.

\section{The Representation}\label{section:representation}

We are now finally ready to start describing our representation. In this section we prove the correctness of our representation for the signature $(\<\phantom{a}\>\compo, \A_1, \ldots, \A_n)$, but the representation is the same one we will use for all the expanded signatures that follow.

\begin{definition}
Let $\algebra{A}$ be an algebra of a signature containing composition. A \defn{right congruence} is an equivalence relation $\sim$ on $\algebra{A}$ such that if $a_i \sim b_i$ for every $i$ then $\boldsymbol{a} \compo c \sim \boldsymbol{b} \compo c$ for any $c \in \algebra{A}$.
\end{definition}

For the remainder of this section, let $\algebra{A}$ be an algebra of the signature $(\<\phantom{a}\>\compo, \A_1,\linebreak \ldots, \A_n)$ validating \eqref{super}--\eqref{hide}. Hence all the consequences deduced in \Cref{section2} are true of $\algebra{A}$.

For a filter $F$ of $\A$-elements of $\algebra{A}$, define the binary relation  $\sim_F$ on $\algebra{A}$ by $a \sim_F b$ if and only if there exists $\alpha \in F$ such that $\boldsymbol{\alpha} \compo a = \boldsymbol{\alpha} \compo b$.

\begin{lemma}\label{congruence}
For any filter $F$ of $\A$-elements of $\algebra{A}$, the binary relation $\sim_F$ is a right congruence.
\end{lemma}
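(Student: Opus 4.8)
The plan is to verify the three defining properties of a right congruence for $\sim_F$: reflexivity, symmetry, transitivity, and the right-congruence compatibility with composition. Symmetry is immediate from the symmetry of equality. Reflexivity follows by taking $\alpha = \pi_1$ (the top $\A$-element, which lies in every filter $F$), since $\boldsymbol{\pi} \compo a = a = \boldsymbol{\pi} \compo a$ by \eqref{pi}; one should note that $\boldsymbol{\pi}$ here is the tuple of images $\langle \pi_1, \ldots, \pi_n\rangle$ of the top element, so this is exactly an instance of \eqref{pi}.

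For transitivity, suppose $a \sim_F b$ via $\alpha \in F$ and $b \sim_F c$ via $\beta \in F$, so $\boldsymbol{\alpha} \compo a = \boldsymbol{\alpha} \compo b$ and $\boldsymbol{\beta} \compo b = \boldsymbol{\beta} \compo c$. The natural witness is the meet $\gamma = \alpha \bullet \beta$ (equivalently $\alpha \bmeet \beta$ in the Boolean algebra of $\A$-elements), which lies in $F$ because $F$ is a filter. I would then show $\boldsymbol{\gamma} \compo a = \boldsymbol{\gamma} \compo b = \boldsymbol{\gamma} \compo c$. The first equality comes from factoring $\boldsymbol{\gamma} \compo a$ as (something) $\compo\, \boldsymbol{\alpha} \compo a$ using superassociativity together with the fact that composing on the left with $\gamma$ factors through composing with $\alpha$ — concretely, since $\gamma \leq \alpha$ in each Boolean algebra of $\A_i$-elements, we have $\boldsymbol{\gamma} \compo a = \langle \gamma_1, \ldots, \gamma_n\rangle \compo a$ and each $\gamma_i = \langle\A_1^n\alpha\rangle \compo \gamma_i$-type reduction lets us pull $\boldsymbol{\alpha}$ out; then apply the hypothesis $\boldsymbol{\alpha}\compo a = \boldsymbol{\alpha}\compo b$ and reverse the manipulation. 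The symmetric argument using $\gamma \leq \beta$ gives $\boldsymbol{\gamma}\compo b = \boldsymbol{\gamma}\compo c$. Hence $a \sim_F c$.

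For the right-congruence property, suppose $a_i \sim_F b_i$ for every $i$, witnessed by $\alpha^{(i)} \in F$ with $\boldsymbol{\alpha^{(i)}} \compo a_i = \boldsymbol{\alpha^{(i)}} \compo b_i$. Let $\alpha = \alpha^{(1)} \bullet \cdots \bullet \alpha^{(n)} \in F$, so that $\boldsymbol{\alpha} \compo a_i = \boldsymbol{\alpha} \compo b_i$ for every $i$ simultaneously (by the same factoring argument as above, since $\alpha \leq \alpha^{(i)}$). I want to conclude $\boldsymbol{\alpha} \compo (\boldsymbol{a} \compo c) = \boldsymbol{\alpha} \compo (\boldsymbol{b} \compo c)$, which gives $\boldsymbol{a}\compo c \sim_F \boldsymbol{b}\compo c$. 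By superassociativity, $\boldsymbol{\alpha} \compo (\boldsymbol{a} \compo c) = \langle \boldsymbol{\alpha} \compo a_1, \ldots, \boldsymbol{\alpha} \compo a_n \rangle \compo c$, and likewise with $\boldsymbol{b}$; but $\boldsymbol{\alpha}\compo a_i = \boldsymbol{\alpha}\compo b_i$ for each $i$, so the two sides are literally equal.

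The main obstacle is the repeatedly-needed lemma that composing on the left by a smaller $\A$-element factors through composing by a larger one — i.e., if $\gamma \leq \alpha$ among $\A$-elements then $\boldsymbol{\gamma} \compo a = \boldsymbol{\delta} \compo \boldsymbol{\alpha} \compo a$ for a suitable $\boldsymbol{\delta}$ (one can take $\boldsymbol{\delta} = \boldsymbol{\gamma}$ itself and use $\gamma = \gamma \bullet \alpha$, i.e. $\langle\A_1^n\alpha\rangle$-composition is idempotent-like via \eqref{didempotent} and superassociativity). This is where \Cref{morecommutative}, \eqref{didempotent}, superassociativity, and the identification of the order on $\A$-elements with the Boolean order (noted just before \Cref{section:representation}) all get used; everything else is bookkeeping. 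Care is needed that the Boolean meet $\bullet$ of the representative $\A$-elements really does reduce $\boldsymbol{\gamma}\compo a$ to the form with $\boldsymbol{\alpha}$ pulled out, which is exactly the content of the factoring computation.
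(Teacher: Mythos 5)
Your proposal is correct and follows essentially the same route as the paper: the paper's key step is the identity $(\boldsymbol{\alpha\bullet\beta})\compo c = \boldsymbol{\alpha}\compo(\boldsymbol{\beta}\compo c)$ (derived from the definition of $\bullet_i$ and superassociativity, used together with commutativity), which is exactly your ``composing by a smaller $\A$-element factors through composing by a larger one'' lemma in the special case $\gamma=\gamma\bullet\alpha$. The choice of witnesses (the meet of the given witnesses, both for transitivity and for compatibility with composition) and the final double application of superassociativity match the paper's argument.
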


\begin{proof}
It is clear that $\sim_F$ is reflexive and symmetric. To see that $\sim_F$ is transitive, first note that for any $\A$-elements $\alpha$ and $\beta$ and any $c$ we have
\begin{equation}\label{assos}
(\boldsymbol{\alpha \bullet \beta}) \compo c = \<\alpha_1 \bullet_1 \beta_1, \ldots, \alpha_n \bullet_n \beta_n\> \compo c = \<\boldsymbol{\alpha} \compo \beta_1, \ldots, \boldsymbol{\alpha} \compo \beta_n\> \compo c = \boldsymbol{\alpha} \compo (\boldsymbol{\beta} \compo c)
\end{equation}
Now suppose that $a \sim_F b$ and $b \sim_F c$ and let $\alpha \in F$ be such that $\boldsymbol{\alpha} \compo a = \boldsymbol{\alpha} \compo b$ and $\beta \in F$ be such that $\boldsymbol{\beta} \compo b = \boldsymbol{\beta} \compo c$. Then $\alpha \bullet \beta \in F$ since $F$ is a filter and \eqref{assos} and commutativity of the Boolean product operations is precisely what is needed to give $(\boldsymbol{\alpha \bullet \beta}) \compo a = (\boldsymbol{\alpha \bullet \beta}) \compo c$. So $\sim_F$ is transitive.

Suppose now that $a_i \sim_F b_i$ for every $i$ and let $c$ be an arbitrary element of $\algebra{A}$. By hypothesis, for each $i$ we can find $\alpha^i \in F$ such that $\boldsymbol{\alpha^i} \compo a_i = \boldsymbol{\alpha^i} \compo b_i$. Then $\prod_i \alpha^i \in F$ and $(\prod_i \boldsymbol{\alpha^i}) \compo ({\vec a} \compo c) = \<(\prod_i \boldsymbol{\alpha^i}) \compo a_1, \ldots, (\prod_i \boldsymbol{\alpha^i}) \compo a_n \> \compo c = \<(\prod_i \boldsymbol{\alpha^i}) \compo b_1, \ldots, (\prod_i \boldsymbol{\alpha^i}) \compo b_n \> \compo c = (\prod_i \boldsymbol{\alpha^i}) \compo ({\vec b} \compo c)$. So $\boldsymbol{a} \compo c \sim_F \boldsymbol{b} \compo c$.
\end{proof}

The next lemma describes a family of homomorphisms from which we will build a faithful representation.

\begin{lemma}\label{representation}
Let $U$ be an ultrafilter of $\A$-elements of $\algebra{A}$. Write $[a]$ for the $\sim_U$-equiva\-lence class of an element $a \in \algebra{A}$. Let $X \coloneqq \{[a] \mid a \in \algebra{A}\} \setminus \{[0]\}$ and for each $b \in \algebra{A}$ let $\theta_U(b)$ be the partial function from $X^n$ to $X$ given by
\[
\theta_U(b) : ([a_1], \ldots, [a_n]) \mapsto
\begin{cases}
[\<a_1, \ldots, a_n\> \compo b] & \mathrm{if}\text{ }\mathrm{this}\text{ }\mathrm{is}\text{ }\mathrm{not}\text{ }\mathrm{equal}\text{ }\mathrm{to}\text{ }[0] \\
\mathrm{undefined} & \mathrm{otherwise}
\end{cases}
\]
Then the set $\{ \theta_U(b) \mid b \in \algebra{A}\}$ forms a square algebra of $n$-ary partial functions, which we will call $\algebra{F}$ and $\theta_U : \algebra{A} \to \algebra{F}$ is a (surjective) homomorphism of $(\<\phantom{a}\>\compo, \A_1, \ldots, \A_n)$-algebras. Further, if $a$ is inequivalent to both $0$ and $b$ then $\theta_U$ separates $a$ from $b$.
\end{lemma}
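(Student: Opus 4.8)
The plan is to verify the assertions in the order they are stated: first that each $\theta_U(b)$ is a genuine $n$-ary partial function on $X$, then that the collection is closed under composition and the antidomain operations with $\theta_U$ respecting those operations, and finally the separation property. For well-definedness of $\theta_U(b)$ as a function: given $([a_1],\ldots,[a_n])$, the only subtlety is independence of the chosen representatives, which follows because $\sim_U$ is a right congruence (\Cref{congruence}) — if $a_i \sim_U a_i'$ for every $i$ then $\vec a \compo b \sim_U \vec a' \compo b$, so $[\vec a \compo b] = [\vec a' \compo b]$ and in particular one is $[0]$ iff the other is. That $\theta_U(b)$ satisfies \eqref{function} is immediate since the value $[\vec a \compo b]$ is uniquely determined. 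Since $U$ is an ultrafilter, the algebra is square: we must check that every $([a_1],\ldots,[a_n]) \in X^n$ (with no $E$-relativisation, $E$ being universal) is a potential argument, which is automatic from the definition of $X$.

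Next I would verify that $\theta_U$ is a homomorphism, handling composition and antidomain. For composition, unwinding the definition, $\theta_U(\vec b \compo c)([a_1],\ldots,[a_n]) = [\vec a \compo (\vec b \compo c)]$, which by superassociativity \eqref{super} equals $[\langle \vec a \compo b_1, \ldots, \vec a \compo b_n\rangle \compo c]$; this is exactly $(\theta_U(\vec b) \compo \theta_U(c))([a_1],\ldots,[a_n])$ provided the "defined/undefined" bookkeeping matches, i.e.\ that $[\vec a \compo b_j] \ne [0]$ for all $j$ and $[\langle \vec a \compo b_1,\ldots\rangle \compo c] \ne [0]$ iff $[\vec a \compo (\vec b \compo c)] \ne [0]$. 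The forward direction of the bookkeeping is clear; for the reverse one uses \eqref{nowhere}: if some $[\vec a \compo b_j] = [0]$ then $\langle \vec a \compo b_1, \ldots, \vec a \compo b_n\rangle \compo c$ has a $0$ in the $j$th slot (up to $\sim_U$, using the right congruence property) and hence composes to $0$, so by superassociativity $[\vec a \compo (\vec b \compo c)] = [0]$ as well. For antidomain, $\theta_U(\A_i\!b)([a_1],\ldots,[a_n]) = [\vec a \compo \A_i\!b]$; I would show this equals $[a_i]$ exactly when $\theta_U(b)$ is undefined at $([a_1],\ldots,[a_n])$, i.e.\ when $[\vec a \compo b] = [0]$, and is undefined otherwise. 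The twisted law \eqref{anti-twisted} rewrites $\vec a \compo \A_i\!b$ as $\langle \A_1^n(\vec a \compo b)\rangle \compo \langle \D_1^n\!a_1\rangle \compo \cdots \compo a_i$; when $[\vec a \compo b] = [0]$, the leading factor is $\boldsymbol\pi$ up to $\sim_U$ (since $\A_i\!0 = \pi_i$ and using that $U$ is a filter to absorb the $\D$-factors via \eqref{newdomain}-type reasoning on the order), yielding $[a_i]$; when $[\vec a \compo b] \ne [0]$, I need $[\vec a \compo \A_i\!b] = [0]$, which is where the ultrafilter property is essential — $U$ being an ultrafilter (prime), $[\vec a \compo b] \ne [0]$ forces the relevant $\D$-element into $U$, hence $\langle \A_1^n(\vec a \compo b)\rangle$ composed with anything in the relevant class lands in $[0]$ by \eqref{zero_n} combined with \eqref{aux_z2}.

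For the separation clause: if $a \not\sim_U 0$ and $a \not\sim_U b$, apply $\theta_U(a)$ and $\theta_U(b)$ to the tuple $([\pi_1],\ldots,[\pi_n])$ — or more precisely to a tuple of (images of) identity-like elements — so that $\theta_U(a)$ returns $[\boldsymbol\pi \compo a] = [a]$ via \eqref{pi} and similarly $\theta_U(b)$ returns $[b]$; since $[a] \ne [0]$ the first is defined, and since $[a] \ne [b]$ the two functions disagree at that point. One must check the argument tuple genuinely lies in $X^n$, i.e.\ that $[\pi_i] \ne [0]$, which holds because otherwise $\langle\boldsymbol\pi\rangle \compo a \sim_U 0$ would give $a \sim_U 0$ (using \eqref{pi} and that a slot of $0$ kills the composition via \eqref{nowhere}), contradicting the hypothesis. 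The main obstacle I anticipate is the antidomain case of the homomorphism check: getting the dichotomy right requires carefully leveraging that $U$ is an \emph{ultra}filter (not just a filter) — the filter property alone suffices for $\sim_U$ to be a congruence and for composition, but the "undefined when $[\vec a\compo b]\ne[0]$" half of the antidomain computation needs primeness of $U$ to decide membership of $\D$-elements, and organising the twisted-law manipulations so the $\D_1^n\!a_j$ factors are cleanly absorbed is the fiddly part.
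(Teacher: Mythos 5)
Your plan is correct and follows essentially the same route as the paper: well-definedness from \Cref{congruence}, the composition check via superassociativity and \eqref{nowhere}, the antidomain dichotomy via the twisted laws and the ultrafilter property of $U$, and separation by evaluating at $[\boldsymbol{\pi}]$ after checking $[\pi_i] \neq [0]$. The one step to make precise is your claim that the leading factor $\<\A_1^n(\boldsymbol{a} \compo b)\>$ is ``$\boldsymbol{\pi}$ up to $\sim_U$'' when $[\boldsymbol{a} \compo b] = [0]$: since $\sim_U$ is only known to be a \emph{right} congruence and not an $\A_i$-congruence, you cannot pass $[\boldsymbol{a}\compo b]=[0]$ through $\A_i$; instead invoke \eqref{max} to get $\A(\boldsymbol{a} \compo b) \in U$ and then compose both $\boldsymbol{a} \compo \A_i\!b$ and $a_i$ on the left with this element of $U$ (together with the $\D\!a_j$, which lie in $U$ by ultrafilterness), exactly as the paper does.
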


\begin{proof}
That $\sim_U$ is a right congruence says that $\theta_U(b)$ is well defined for every $b \in \algebra{A}$. If we show that $\theta_U$ satisfies the conditions for being a homomorphism, then it automatically follows that the domain of $\algebra{F}$ is closed under the operations and so really is an algebra of $n$-ary partial functions.

We write $[\mathbf{a}]$ for $([a_1], \ldots, [a_n])$. To see that composition is represented correctly we first argue that $\theta_U(\boldsymbol{b} \compo c)$ is defined if and only if $\<\theta_U(b_1), \ldots, \theta_U(b_n)\> \compo \theta_U(c)$ is defined. If $\<\theta_U(b_1), \ldots, \theta_U(b_n)\> \compo \theta_U(c)$ is defined at $[\boldsymbol{a}]$ then in particular $[\<\boldsymbol{a} \compo b_1, \ldots, \boldsymbol{a} \compo b_n\> \compo c]$ must be inequivalent to $0$. By superassociativity, this equals $[\boldsymbol{a} \compo (\boldsymbol{b} \compo c)]$ and hence $\theta_U(\boldsymbol{b} \compo c)$ is defined at $\boldsymbol{a}$.

If $\<\theta_U(b_1), \ldots, \theta_U(b_n)\> \compo \theta_U(c)$ is \emph{un}defined at $[\boldsymbol{a}]$ then this is either because $\boldsymbol{a} \compo (\boldsymbol{b} \compo c)$ is equivalent to $0$, in which case $\theta_U(\boldsymbol{b} \compo c)$ is undefined at $[\boldsymbol{a}]$, or because there is an $\alpha \in U$ such that $\boldsymbol{\alpha} \compo (\boldsymbol{a} \compo b_i) = 0$ for some $i$. In the second case
\begin{align*}
\boldsymbol{\alpha} \compo (\boldsymbol{a} \compo (\boldsymbol{b} \compo c)) &\myeq{s}\boldsymbol{\alpha} \compo \<\boldsymbol{a} \compo b_1, \ldots, \boldsymbol{a} \compo b_n\> \compo c \\
&\myeq{s} \<\boldsymbol{\alpha} \compo \boldsymbol{a} \compo b_1, \ldots, \boldsymbol{\alpha} \compo \boldsymbol{a} \compo b_n\> \compo c \\
&\myeq{\phantom{s}} \<\boldsymbol{\alpha} \compo \boldsymbol{a} \compo b_1, \ldots, \boldsymbol{\alpha} \compo \boldsymbol{a} \compo b_{i-1}, 0,\boldsymbol{\alpha} \compo \boldsymbol{a} \compo b_{i+1}, \ldots, \boldsymbol{\alpha} \compo \boldsymbol{a} \compo b_n\> \compo c \\
&\myeq{\ref{nowhere}} 0
\end{align*}
and so  $\theta_U(\boldsymbol{b} \compo c)$ is again undefined at $[\boldsymbol{a}]$.

If $\theta_U(\boldsymbol{b} \compo c)$ and $\<\theta_U(b_1), \ldots, \theta_U(b_n)\> \compo \theta_U(c)$ are both defined at $[\boldsymbol{a}]$ then they both equal $[\boldsymbol{a} \compo \boldsymbol{b} \compo c]$. We conclude that composition is represented correctly by $\theta_U$.

We now show that each $\A_i$ is represented correctly by $\theta_U$. It is helpful to first note that $\theta_U$ represents $0$ correctly, as $\boldsymbol{a} \compo 0 = 0$ for any $\boldsymbol{a}$ and so $\theta_U(0)$ is undefined everywhere.

Next we will show that $\theta_U(\A_i\!b)$ is a restriction of the $i$th projection, for any $b \in \algebra{A}$ and for any $i$. Suppose that $\theta_U(\A_i\!b)$ is defined on $[\mathbf{a}]$, so that $a_1, \ldots, a_n$ and $\boldsymbol{a} \compo \A_i\!b$ are all inequivalent to $0$. We wish to show that $[\boldsymbol{a} \compo \A_i\!b] = [a_i]$. As  $\<\A_1^n(\boldsymbol{a} \compo \A_i\!b)\> \compo (\boldsymbol{a} \compo \A_i\!b) = 0 = \<\A_1^n(\boldsymbol{a} \compo \A_i\!b)\> \compo 0$, we know that $\A(\boldsymbol{a} \compo \A_i\!b) \notin U$ and so $\D(\boldsymbol{a} \compo \A_i\!b) \in U$ since $U$ is an ultrafilter. Then
\begin{align*}
\<\D_1^n(\boldsymbol{a} \compo \A_i\!b)\> \compo (\boldsymbol{a} \compo \A_i\!b) &= \<\D_1^n(\boldsymbol{a} \compo \A_i\!b)\> \compo (\boldsymbol{a} \compo \D_i(\A_i\!b)) \\
&= \<\D_1^n(\boldsymbol{a} \compo \A_i\!b)\> \compo (\<\D_1^n(\boldsymbol{a} \compo \A_i\!b)\> \compo a_i)\\
&= \<\D_1^n(\boldsymbol{a} \compo \A_i\!b)\> \compo a_i
\end{align*}
where the second equality follows by the $i$th twisted law for domain. We conclude that $[\boldsymbol{a} \compo \A_i\!b] = [a_i]$, as desired.

Next we will show that where $\theta_U(b)$ is not defined, $\theta_U(\A_i\!b)$ \emph{is} defined. Suppose that $a_1, \ldots, a_n$ are all inequivalent to $0$, so $\D\!a_1, \ldots, \D\!a_n \in U$, but that $\theta_U(b)$ is undefined at $[\boldsymbol{a}]$, meaning $[\boldsymbol{a} \compo b] = [0]$. So there is an $\alpha \in U$ with $\boldsymbol{\alpha} \compo (\boldsymbol{a} \compo b) = \boldsymbol{\alpha} \compo 0 = 0$. Then \eqref{max} tells us that $\alpha \leq \A({\vec a} \compo b)$ and so $U$, being an ultrafilter, contains $\A({\vec a} \compo b)$. Then
\begin{align*}
&\quad\,\,\< \A_1^n({\vec a} \compo b)\> \compo \<\D_1^n\!a_1\> \compo \ldots \compo \<\D_1^n\!a_n\> \compo (\boldsymbol{a} \compo \A_i\!b)\\
&= \<\< \A_1^n({\vec a} \hspace{-0.3pt}\compo \hspace{-0.3pt}b)\> \compo \<\D_1^n\!a_1\> \compo \ldots \compo \<\D_1^n\!a_n\> \compo a_1 ,
 \ldots
, \< \A_1^n({\vec a} \hspace{-0.3pt}\compo \hspace{-0.3pt}b)\> \compo \<\D_1^n\!a_1\> \compo \ldots \compo \<\D_1^n\!a_n\> \compo a_n\>\\
&\quad\,\compo \A_i\!b  \text{\hspace{158pt} by superassociativity}\\
&= \<{\vec a} \compo \A_1\!b, \ldots, {\vec a} \compo \A_n\!b\> \compo \A_i\!b\,\text{\hspace{.5pt}\qquad\qquad\qquad\,\,\, by the twisted laws for antidomain}\\
&= {\vec a} \compo \<\A_1^n\!b\> \compo \A_i\!b \,\,\text{\hspace{8.5pt}\qquad\qquad\qquad\qquad\qquad\quad by superassociativity}\\
&= {\vec a} \compo \A_i\!b \,\,\text{\hspace{.5pt}\qquad\qquad\qquad\qquad\qquad\qquad\qquad\quad by \eqref{didempotent}}\\
&= \< \A_1^n({\vec a} \compo b)\> \compo \<\D_1^n\!a_1\> \compo \ldots \compo \<\D_1^n\!a_n\> \compo a_i  \text{\qquad\, by the $i$th twisted law for antidomain}
\end{align*}
and hence $[\boldsymbol{a} \compo \A_i\!b] = [a_i] \neq [0]$ and so $\theta_U(\A_i\!b)$ is defined at $[\boldsymbol{a}]$.

It remains to show that $\theta_U(\A_i\!b)$ cannot be defined when $\theta_U(b)$ is defined. Suppose for a contradiction that both $\theta_U(b)$ and $\theta_U(\A_i\!b)$ are defined on an $n$-tuple $[\boldsymbol{a}]$. Now \eqref{last} tells us that $\boldsymbol{a} \compo \A_1\!b, \ldots, \boldsymbol{a} \compo \A_n\!b$ must be simultaneously equivalent or inequivalent to $0$, for if there is an $\alpha \in U$ with $\boldsymbol{\alpha} \compo (\boldsymbol{a} \compo \A_j\!b) = 0$ then by superassociativity $\<\boldsymbol{\alpha} \compo a_i, \ldots, \boldsymbol{\alpha} \compo a_n\> \compo \A_j\!b = 0$ and so $\boldsymbol{\alpha} \compo (\boldsymbol{a} \compo \A_k\!b) = \<\boldsymbol{\alpha} \compo a_i, \ldots, \boldsymbol{\alpha} \compo a_n\> \compo \A_k\!b = 0$. Hence $\theta_U(\A_1\!b), \ldots \theta_U(\A_1\!b)$ are all defined on $[\boldsymbol{a}]$, with each $\theta_U(\A_j\!b)$, being a restriction of the $j$th projection, having value $[a_j]$. But then
\begin{align*}
\theta_U(b)([\boldsymbol{a}]) &= \theta_U(b)(\theta_U(\A_1\!b)([\boldsymbol{a}]), \ldots, \theta_U(\A_n\!b)([\boldsymbol{a}])) && \\
&= (\<\theta_U(\A_1\!b), \ldots, \theta_U(\A_n\!b)\> \compo \theta_U(b))([\boldsymbol{a}]) && \text{by the definition of $\<\phantom{a}\> \compo$}\\
&= \theta_U(\<\A_1\!b, \ldots, \A_n\!b\> \compo b)([\boldsymbol{a}]) && \text{as $\<\phantom{a}\> \compo$ represented correctly}\\
&= \theta_U(0)([\boldsymbol{a}]) && \text{by \eqref{zero_n}}
\end{align*}
contradicting our observation that $0$ is represented by $\theta_U$ as the empty function. This completes the proof that the antidomain operations are represented correctly by $\theta_U$.

For the last part, if $a$ is inequivalent to both $0$ and $b$, then we know that $\pi_i$ is inequivalent to $0$, for each $i$, otherwise $a = \boldsymbol{\pi} \compo a \sim_U \<\pi_1, \ldots, \pi_{i-1}, 0, \pi_{i+1}, \ldots, \pi_n\> \compo a = 0$. So $\theta_U(a)([\boldsymbol{\pi}]) = [\boldsymbol{\pi} \compo a] = [a]$ and if $\theta_U(b)([\boldsymbol{\pi}])$ is defined then it equals $[b]$, which is distinct from $[a]$.
\end{proof}

The next lemma shows that there are enough ultrafilters to form a faithful representation.

\begin{lemma}\label{proper}
Let $a, b \in \algebra{A}$ and suppose that $a \nleq b$. Then there is an ultrafilter $U$ of $\A$-elements for which $a \nsim_U 0$ and $a \nsim_U b$.
\end{lemma}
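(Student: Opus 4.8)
The plan is to obtain $U$ by separating a suitable proper ideal of the Boolean algebra of $\A$-elements from a point, using the standard fact that a filter disjoint from an ideal in a Boolean algebra extends to an ultrafilter disjoint from that ideal. I would work throughout in the representative Boolean algebra of $\A$-elements, writing $+$ and $\bullet$ for its operations, $\compl{x}$ for the complement of $x$, and $\leq$ for its order (which, as already noted, agrees with the order on $\algebra{A}$). Set $\delta \coloneqq \D_1\!a$; then $\boldsymbol{\delta} = \<\D_1^n\!a\>$ and $\A_1\!a = \compl{\delta}$.

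The crux is the set $W$ of $\A$-elements $\alpha$ satisfying $\boldsymbol{\alpha} \compo a = \boldsymbol{\alpha} \compo b$, together with two facts: (i) $W$ is a proper ideal, and (ii) $\delta \notin W$. For (i), closure under finite joins is exactly \eqref{sum}; downward closure is a one-line computation from \eqref{assos} (if $\beta \leq \alpha$ then $\beta \bullet \alpha = \beta$, so $\boldsymbol{\beta} \compo c = \boldsymbol{\beta} \compo (\boldsymbol{\alpha} \compo c)$ for every $c$, and taking $c = a$ and $c = b$ with $\alpha \in W$ gives $\beta \in W$); and properness holds because $\pi_1 \in W$ would force $\boldsymbol{\pi} \compo a = \boldsymbol{\pi} \compo b$, i.e.\ $a = b$ by \eqref{pi}, contradicting $a \nleq b$ together with the reflexivity $a \leq a$ (which is \eqref{newdomain}). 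For (ii), $\boldsymbol{\delta} \compo a = \<\D_1^n\!a\> \compo a = a$ by \eqref{newdomain}, whereas $\boldsymbol{\delta} \compo b = \<\D_1^n\!a\> \compo b$, and these differ precisely because $a \nleq b$.

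Granting (i) and (ii): since $W$ is downward closed, no $\A$-element above $\delta$ lies in $W$, so the principal filter generated by $\delta$ is disjoint from the ideal $W$; extending it to an ultrafilter gives an ultrafilter $U$ of $\A$-elements with $\delta \in U$ and $U \cap W = \emptyset$. It then remains to see $U$ works. The condition $U \cap W = \emptyset$ is literally $a \nsim_U b$. And $a \nsim_U 0$ holds: were there $\alpha \in U$ with $\boldsymbol{\alpha} \compo a = 0 \, (= \boldsymbol{\alpha} \compo 0)$, then writing $\alpha = \A_1\!p$ so that $\boldsymbol{\alpha} = \<\A_1^n\!p\>$, the $i = 1$ instance of \eqref{max} gives $\A_1\!p \leq \A_1\!a$, i.e.\ $\alpha \leq \compl{\delta}$, whence $\compl{\delta} \in U$, contradicting $\delta \in U$.

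I do not anticipate a real obstacle: the mathematical content sits entirely in (ii), which just restates the hypothesis $a \nleq b$, and the rest is standard ultrafilter machinery. The only mild care required is the bookkeeping with $\A$-elements — verifying that $W$ is genuinely an ideal through \eqref{sum} and \eqref{assos}, and reading $\boldsymbol{\alpha} \compo a = 0$ as the Boolean inequality $\alpha \leq \compl{\delta}$ via \eqref{max} — and keeping the translation between $\algebra{A}$ and its embedded Boolean algebras straight.
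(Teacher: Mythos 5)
Your proof is correct and is essentially the paper's argument in dual packaging: the paper builds the filter generated by $\{\alpha \mid \boldsymbol{\alpha} \compo a = a\} \cup \{\A(\beta) \mid \boldsymbol{\beta}\compo a = \boldsymbol{\beta}\compo b\}$ and proves it proper via \eqref{sum}, which is exactly your observation that the equalisers of $a$ and $b$ form an ideal (closed under joins by \eqref{sum}, downward closed by \eqref{assos}) not containing $\D_1\!a$. The only substantive variation is that you dispose of $a \nsim_U 0$ via \eqref{max} and $\delta \in U$, where the paper instead invokes \eqref{quasi} to conclude $\A(\alpha) \in U$; both are sound.
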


\begin{proof}
Let $F$ be the filter of $\A$-elements generated by $\{\alpha \mid \boldsymbol{\alpha} \compo a = a\} \cup \{ \A(\beta) \mid \boldsymbol{\beta} \compo a = \boldsymbol{\beta} \compo b \}$. If $0 \in F$ then (employing \Cref{assos}) $0 = \alpha \bullet \A(\beta^1) \bullet \ldots \bullet \A(\beta^m)$ for some $\A$-elements $\alpha, \beta^1, \ldots, \beta^m$ with $\boldsymbol{\alpha} \compo a = a$ and $\boldsymbol{\beta^i} \compo a = \boldsymbol{\beta^i} \compo b$ for each $i \in \{1, \ldots, m\}$. Define $\beta \coloneqq \sum_i \beta^i$. Then $0 = \alpha \bullet \A(\beta)$ and so $\alpha \leq \beta$, giving $\boldsymbol{\alpha} \compo a \leq \boldsymbol{\beta} \compo a$. From our assumption that $\boldsymbol{\beta^i} \compo a = \boldsymbol{\beta^i} \compo b$ for each $i \in \{1, \ldots, m\}$, repeated application of \eqref{sum} gives $\boldsymbol{\beta} \compo a = \boldsymbol{\beta} \compo b$, so we have $a = \boldsymbol{\alpha} \compo a \leq \boldsymbol{\beta} \compo a = \boldsymbol{\beta} \compo b \leq \boldsymbol{\pi} \compo b = b$ contradicting the assumption that $a \nleq b$. Hence the filter $F$ is proper and so can be extended to an ultrafilter, $U$, say.

Suppose that $a \sim_U 0$, in which case there is an $\alpha \in U$ such that $\boldsymbol{\alpha} \compo a = \boldsymbol{\alpha} \compo 0 = 0$. Now if we compose on the left each of $\<\A_1\!\alpha_1, \ldots, \A_n\!\alpha_n\> \compo a$ and $a$ with $\vec{\alpha}$ and $\<\A_1\!\alpha_1, \ldots, \A_n\!\alpha_n\>$ in turn, we obtain, by an application of \eqref{quasi}, the equation $\<\A_1\!\alpha_1, \ldots, \A_n\!\alpha_n\> \compo a = a$. So, by the definition of $U$, we get $\A(\alpha) \in U$---a contradiction, as $U$ is a proper filter containing $\alpha$. Hence $a \nsim_U 0$.

Suppose that $a \sim_U b$ in which case there is a $\beta \in U$ such that $\boldsymbol{\beta} \compo a = \boldsymbol{\beta} \compo b$. Then $\A(\beta) \in F \subseteq U$---a contradiction, as $U$ is a proper filter containing $\beta$. Hence $a \nsim_U b$.
\end{proof}

\begin{theorem}\label{thm}
The class of $(\<\phantom{a}\>\compo, \A_1, \ldots, \A_n)$-algebras that are representable by $n$-ary partial functions is a proper quasivariety, finitely axiomatised by (quasi)equa\-tions \eqref{super}--\,\eqref{hide}.
\end{theorem}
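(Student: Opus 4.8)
The plan is to prove the two halves of the asserted identity ``the representation class coincides with the class axiomatised by \eqref{super}--\eqref{hide}'' and then to read off the structural description. One half is already in hand: \Cref{axioms} and \Cref{prop:hide}, together with the standard fact that \eqref{newdomain} holds in every algebra of $n$-ary partial functions, show that every representable algebra satisfies \eqref{super}--\eqref{hide}. So the substance is the converse, and for this I would combine the consequences derived in \Cref{section2} with \Cref{representation} and \Cref{proper}.

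So let $\algebra{A}$ be an arbitrary algebra satisfying \eqref{super}--\eqref{hide}; then every consequence established in \Cref{section2} holds of $\algebra{A}$, in particular $\leq$ is a partial order by \Cref{lemma:order}. Let $I$ be the set of ultrafilters of the Boolean algebra of $\A$-elements of $\algebra{A}$, and for $U \in I$ let $\theta_U \colon \algebra{A} \to \algebra{F}_U$ be the homomorphism onto a square algebra of $n$-ary partial functions on a base $X_U$ provided by \Cref{representation}. Following \Cref{def:disjoint}, rename points so that the $X_U$ become pairwise disjoint and put $\theta(a) \coloneqq \bigcup_{U \in I}\theta_U(a)$; this is the disjoint union of $(\theta_U)_{U \in I}$ composed with the diagonal map $\algebra{A} \to \algebra{A}^I$, hence a homomorphism from $\algebra{A}$ onto an algebra of $n$-ary partial functions with base $\bigcup_{U \in I}X_U$ and equivalence relation $\bigcup_{U \in I}(X_U \times X_U)$. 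It remains only to check that $\theta$ is injective, that is, that the family $(\theta_U)_{U \in I}$ is jointly injective. If $a \neq b$ in $\algebra{A}$ then, by antisymmetry of $\leq$, either $a \nleq b$ or $b \nleq a$; say $a \nleq b$. Then \Cref{proper} yields a $U \in I$ with $a \nsim_U 0$ and $a \nsim_U b$, and the final clause of \Cref{representation} gives $\theta_U(a) \neq \theta_U(b)$, hence $\theta(a) \neq \theta(b)$. Therefore $\theta$ is an isomorphism onto its image, $\algebra{A}$ is representable, and the converse inclusion is proved.

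For the structural claims: for each fixed $n$ the indexed families in \eqref{super}--\eqref{hide} are finite, so the axiomatisation is a finite set of equations together with the single quasiequation \eqref{quasi}; hence the representation class is a finitely axiomatised quasivariety. It fails to be a variety---so is a \emph{proper} quasivariety---by the example already given in \Cref{section2}: the algebra of $n$-ary partial functions on base $\{1,2,3\}$ there has a quotient refuting \eqref{quasi}, so the class is not closed under quotients. As for where the difficulty lies, essentially all of it has already been discharged: the construction of the maps $\theta_U$ and the proof that they represent composition and the antidomain operations correctly (\Cref{representation}), and the existence of enough separating ultrafilters (\Cref{proper}), are the genuinely laborious steps. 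What remains for this proof is the bookkeeping of gluing the $\theta_U$ into one faithful representation via a disjoint union and the observation that ``finite (quasi)equational axiomatisation'' together with ``not closed under quotients'' is exactly ``proper quasivariety''.
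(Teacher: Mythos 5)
Your proposal is correct and follows essentially the same route as the paper: soundness from \Cref{axioms} and \Cref{prop:hide}, and completeness by gluing the homomorphisms $\theta_U$ of \Cref{representation} into a disjoint union, with joint injectivity supplied by antisymmetry of $\leq$ (\Cref{lemma:order}) and the separating ultrafilters of \Cref{proper}, and properness from the quotient example. The only (inessential) difference is that you index the disjoint union over \emph{all} ultrafilters of $\A$-elements, whereas the paper indexes over pairs $a \nleq b$ with one chosen ultrafilter $U_{ab}$ per pair.
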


\begin{proof}
We continue to let $\algebra{A}$ be an arbitrary $(\<\phantom{a}\>\compo, \A_1, \ldots, \A_n)$-algebra validating \eqref{super}--\eqref{hide}. For each $a, b \in \algebra{A}$ with $a \nleq b$, let $U_{ab}$ be a choice of an ultrafilter of $\A$-elements for which $a \nsim_U 0$ and $a \nsim_U b$.  Let $\theta_{ab}$ be the corresponding homomorphism as described in \Cref{representation}, which is guaranteed to separate $a$ from $b$. Take a disjoint union, in the sense of \Cref{def:disjoint}, of the family $(\theta_{ab})_{a, b \in \algebra{A}}$ of homomorphisms and call this $\varphi$. So $\varphi$ is a homomorphism from some power $\algebra{A}^S$ of $\algebra{A}$ to an algebra of $n$-ary partial functions. Let $\Delta$ be the diagonal embedding of $\algebra{A}$ into $\algebra{A}^S$. Then the map $\theta : \algebra{A} \to \Ima(\varphi \circ \Delta)$ defined by $\theta(a) = (\varphi \circ \Delta)(a)$ is a surjective homomorphism from $\algebra{A}$ to an algebra of $n$-ary partial functions.

For distinct $a, b \in \algebra{A}$, either $a \nleq b$ or $b \nleq a$ and so $\theta_{ab}$, and therefore $\theta$, separates $a$ and $b$. Hence $\theta$ is an isomorphism, so a representation of $\algebra{A}$ by $n$-ary partial functions.
\end{proof}

Note that whilst \Cref{representation} only uses square algebras of functions, in \Cref{thm}, by taking a disjoint union of homomorphisms, we require non-square algebras of functions for our representation.\footnote{It is linguistically convenient to treat the $\theta$ of \Cref{thm} as uniquely specified and then refer to `our representation' or `the representation' in defiance of the fact that there is some nonconstructive choice involved in selecting which ultrafilters to use.}

It is clear that if $\algebra{A}$ is finite then the representation described in \Cref{thm} has a finite base. More specifically the size of the base is no greater than the cube of the size of the algebra.

\begin{corollary}
The finite representation property holds for the signature $(\<\phantom{a}\>\compo, \A_1,\linebreak \ldots, \A_n)$ for representation by $n$-ary partial functions.
\end{corollary}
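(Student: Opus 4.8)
The plan is to note that the representation constructed in the proof of \Cref{thm} is assembled entirely from data internal to the algebra, so that its base is finite whenever the algebra is. First I would take a finite algebra $\algebra{A}$ representable by $n$-ary partial functions. By \Cref{thm} the representation class is exactly the class of algebras validating \eqref{super}--\eqref{hide}, so $\algebra{A}$ validates these (quasi)equations and the construction from the proof of \Cref{thm} applies to it directly.

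Then I would simply bound the size of the base of that representation. For each pair $(a,b)$ of elements of $\algebra{A}$ with $a \nleq b$, the homomorphism $\theta_{ab}$ built in the proof of \Cref{thm} via \Cref{representation} and \Cref{proper} has as its base the set $\{[c] \mid c \in \algebra{A}\} \setminus \{[0]\}$ of $\sim$-equivalence classes of elements of $\algebra{A}$, so that base has at most $|\algebra{A}|$ elements. The representation $\theta$ of \Cref{thm} is a disjoint union of the family $(\theta_{ab})$ over such pairs, of which there are at most $|\algebra{A}|^2$; hence the base of $\theta$ has at most $|\algebra{A}|^3$ elements. As $\algebra{A}$ is finite this base is finite, so $\algebra{A}$ is representable on a finite base, as \Cref{def:frp} demands.

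I do not expect a genuine obstacle: the whole argument is the parenthetical observation already recorded just before the corollary, that the base of the representation of \Cref{thm} is bounded by the cube of $|\algebra{A}|$. The one thing worth spelling out is the appeal to \Cref{thm} to pass from ``$\algebra{A}$ is representable'' to ``$\algebra{A}$ validates \eqref{super}--\eqref{hide}'', since it is the latter that makes the explicit, size-controlled construction available rather than merely some abstract representation on an uncontrolled base.
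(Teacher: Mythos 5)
Your proposal is correct and is essentially identical to the paper's own argument, which consists of the remark preceding the corollary that the base of the representation from \Cref{thm} has size at most $|\algebra{A}|^3$ (one base of size at most $|\algebra{A}|$ per pair $(a,b)$ with $a \nleq b$, disjointly unioned). Your explicit note that one must first invoke \Cref{thm} to pass from representability to validity of \eqref{super}--\eqref{hide} is a worthwhile clarification of a step the paper leaves implicit, but it is not a different approach.
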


\section{Injective Partial Functions}\label{section:injective}

In this section we present an algebraic characterisation of the injective partial functions within algebras of $n$-ary partial functions. This allows us to extend the axiomatisation of \Cref{section2} to an axiomatisation of the class of $(\<\phantom{a}\>\compo, \A_1, \ldots, \A_n)$-algebras representable as injective $n$-ary partial functions.

The following definition applies to any algebra with composition in the signature and with the domain operations either in the signature or definable via antidomain operations.

\begin{definition}
We will call an element $a$ \defn{injective} if it satisfies the indexed quasiequations
\begin{align}\label{injective}
\boldsymbol{b} \compo a = \boldsymbol{c} \compo a \limplies \boldsymbol{b} \compo \D_i\!a = \boldsymbol{c} \compo \D_i\!a&&\text{for every }i
\end{align}
\end{definition}

\begin{proposition}\label{injectiverep}
The representation described in \Cref{thm} represents as injective functions precisely the injective elements of the algebra.
\end{proposition}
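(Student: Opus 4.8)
The plan is to prove both directions of the "precisely" claim, using the concrete description of the representation $\theta$ from \Cref{thm}. Recall that $\theta(a)$ is (up to the disjoint-union bookkeeping) built from the maps $\theta_U(a) : ([a_1],\dots,[a_n]) \mapsto [\boldsymbol a \compo a]$ when this is inequivalent to $0$. Since injectivity of $\theta(a)$ as a function is equivalent to injectivity of each $\theta_U(a)$ on its own base (distinct disjoint components can never collide), and since the quasiequations \eqref{injective} are preserved and reflected by the faithful representation in the obvious way, it suffices to work with a single $\theta_U$ for an ultrafilter $U$ of $\A$-elements, and to relate injectivity of $\theta_U(a)$ to the algebraic condition \eqref{injective} holding of $a$ in $\algebra A$.

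First I would prove that if $a$ is injective (satisfies \eqref{injective} in $\algebra A$) then every $\theta_U(a)$ is an injective partial function. Suppose $\theta_U(a)([\boldsymbol b]) = \theta_U(a)([\boldsymbol c])$, both defined; so $[\boldsymbol b \compo a] = [\boldsymbol c \compo a]$, meaning there is $\alpha \in U$ with $\boldsymbol\alpha \compo \boldsymbol b \compo a = \boldsymbol\alpha \compo \boldsymbol c \compo a$. Writing $\boldsymbol b' := \boldsymbol\alpha \compo \boldsymbol b$ and $\boldsymbol c' := \boldsymbol\alpha \compo \boldsymbol c$ componentwise (using \eqref{assos}-style superassociativity manipulations), this reads $\boldsymbol{b'} \compo a = \boldsymbol{c'} \compo a$, so \eqref{injective} gives $\boldsymbol{b'} \compo \D_i\!a = \boldsymbol{c'} \compo \D_i\!a$ for every $i$. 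Now I would use the $i$th twisted law for domain, \eqref{twisted}, to rewrite $\boldsymbol{b'} \compo \D_i\!a = \<\D_1^n(\boldsymbol{b'} \compo a)\> \compo b'_i$ and similarly for $c'$; since the domain terms on both sides agree, I get $\<\D_1^n(\boldsymbol{b'} \compo a)\> \compo b'_i = \<\D_1^n(\boldsymbol{b'} \compo a)\> \compo c'_i$. Because $\theta_U(a)$ was defined at $[\boldsymbol b]$, the element $\boldsymbol b \compo a$ (hence $\D(\boldsymbol b \compo a)$, and after composing with $\alpha$ also $\D(\boldsymbol{b'}\compo a)$, once one checks $\D(\boldsymbol b \compo a)\bullet\alpha\in U$) lies in $U$. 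Left-multiplying the chain of equalities $\boldsymbol\alpha\compo b_i$ vs $\boldsymbol\alpha\compo c_i$ appropriately and invoking the quasiequation \eqref{quasi} — the $\D$-part from the displayed equation, the $\A$-part being trivially $0=0$ since that $\A$-element is out of $U$ — yields $\boldsymbol\gamma \compo b_i = \boldsymbol\gamma \compo c_i$ for a suitable $\gamma \in U$, i.e. $[b_i] = [c_i]$ for every $i$, so $[\boldsymbol b] = [\boldsymbol c]$.

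Conversely I would show that if $\theta(a)$ is injective then $a$ satisfies \eqref{injective} in $\algebra A$; equivalently, by faithfulness, that $\theta(\boldsymbol b \compo a) = \theta(\boldsymbol c \compo a)$ forces $\theta(\boldsymbol b \compo \D_i\!a) = \theta(\boldsymbol c \compo \D_i\!a)$. Here I would argue pointwise on the base of $\theta$: fix a point $[\boldsymbol d]$ in the base of some $\theta_U$ and suppose $\theta_U(\boldsymbol b \compo \D_i\!a)$ is defined at $[\boldsymbol d]$ with value $v$. Using superassociativity and the twisted law for domain \eqref{twisted} one checks that $\theta_U(\boldsymbol b \compo \D_i\!a)([\boldsymbol d]) = \theta_U(\D_i\!a)(\theta_U(b_1)([\boldsymbol d]),\dots,\theta_U(b_n)([\boldsymbol d]))$, and since $\theta_U(\D_i\!a)$ is the $i$th projection restricted to $\D(a)$, this value is the $i$th coordinate $[\boldsymbol d \compo b_i]$ and moreover $\theta_U(a)$ is defined at $(\theta_U(b_1)([\boldsymbol d]),\dots,\theta_U(b_n)([\boldsymbol d]))$. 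The hypothesis $\theta(\boldsymbol b \compo a) = \theta(\boldsymbol c \compo a)$ says $\theta_U(a)$ takes the same value on the $\boldsymbol b$-image point as on the $\boldsymbol c$-image point of $[\boldsymbol d]$; injectivity of $\theta(a)$ then forces those two image points to coincide, hence in particular their $i$th coordinates coincide — but those $i$th coordinates are exactly $\theta_U(\boldsymbol b \compo \D_i\!a)([\boldsymbol d])$ and $\theta_U(\boldsymbol c \compo \D_i\!a)([\boldsymbol d])$. Running this symmetrically (defined-ness transfers too) gives equality of the two functions, as required.

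The main obstacle I anticipate is the bookkeeping in the first direction: carefully tracking how composing on the left with an ultrafilter element $\alpha$ interacts with the domain elements $\D(\boldsymbol b \compo a)$ — specifically, verifying that the relevant meets of $\A$-elements stay inside $U$ so that the quasiequation \eqref{quasi} can be applied to pass from $\boldsymbol{b'} \compo \D_i\!a = \boldsymbol{c'} \compo \D_i\!a$ back to $[b_i]=[c_i]$. This is the step where one genuinely needs $U$ to be an \emph{ultra}filter (to split on whether $\D(\boldsymbol b\compo a)\in U$) and where the twisted laws for domain do the real work; everything else is a routine unwinding of the definitions of $\theta_U$, composition, $\D_i$, and the disjoint union.
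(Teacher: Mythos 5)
Your proposal is correct and follows essentially the same route as the paper: the forward direction composes the hypothesis with an ultrafilter element, applies \eqref{injective} and the twisted laws for domain, and exhibits an equaliser of $b_i$ and $c_i$ lying in $U$, while the converse is the same pointwise unwinding of composition and $\D_i$ on concrete partial functions (the paper phrases it as the general fact that injective functions are injective elements, you phrase it inside $\theta_U$, but the content is identical). The one quibble is that your appeal to \eqref{quasi} at the end of the forward direction is unnecessary and slightly muddled (taken literally it would yield $b_i = c_i$ outright): the equation $\<\D_1^n(\boldsymbol{b'}\compo a)\>\compo b'_i = \<\D_1^n(\boldsymbol{b'}\compo a)\>\compo c'_i$ is already, after superassociativity, of the form $\boldsymbol{\gamma}\compo b_i = \boldsymbol{\gamma}\compo c_i$ with $\gamma = \D(\boldsymbol{b'}\compo a)\bullet\alpha\in U$, which is exactly the statement $[b_i]=[c_i]$ you need.
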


\begin{proof}
We first argue that in algebras of $n$-ary partial functions injective functions are injective elements; then if an element of a representable algebra is represented as an injective function it must be an injective element. To this end, suppose $a$ is an injective $n$-ary partial function and that $\boldsymbol{b} \compo a = \boldsymbol{c} \compo a$. Suppose further that $(\boldsymbol{x}, z) \in \boldsymbol{b} \compo \D_i\!a$. Then $b_1, \ldots, b_n$ are all defined on $\boldsymbol{x}$, the function $a$ is defined on $\<b_1(\boldsymbol{x}), \ldots, b_n(\boldsymbol{x})\>$ and $z = b_i(\boldsymbol{x})$. The first two of these facts tell us that $\boldsymbol{b} \compo a$ is defined on $\boldsymbol{x}$, with value $w$ say. Then by assumption, $\boldsymbol{c} \compo a$ is defined on $\boldsymbol{x}$, also with value $w$. So $c_1, \ldots, c_n$ are all defined on $\boldsymbol{x}$ and $a(b_1(\boldsymbol{x}), \ldots, b_n(\boldsymbol{x})) = w = a(c_1(\boldsymbol{x}), \ldots, c_n(\boldsymbol{x}))$. By injectivity of $a$, we get $b_j(\boldsymbol{x}) = c_j(\boldsymbol{x})$, for every $j$. As $\boldsymbol{c} \compo a$ is defined on $\boldsymbol{x}$, so is $\boldsymbol{c} \compo \D_i\!a$ and it takes value $c_i(\boldsymbol{x}) = b_i(\boldsymbol{x}) = z$. That is, $(\boldsymbol{x}, z) \in \boldsymbol{c} \compo \D_i\!a$. We conclude that $\boldsymbol{b} \compo \D_i\!a \subseteq \boldsymbol{c} \compo \D_i\!a$. By symmetry, the reverse inclusion also holds. Hence $a$ satisfies \eqref{injective}.

We now prove the converse: that every injective element is represented by our representation as an injective function. We will argue that, for any ultrafilter $U$ of $\A$-elements, the map $\theta_U$ described in \Cref{representation} maps injective elements to injective functions. Since a disjoint union of injective functions is injective, the result follows.

Suppose that $a$ is an injective element and that $\theta_U(a)([\mathbf{b}]) = \theta_U(a)([\mathbf{c}])$. That is, there is an $\alpha \in U$ such that $\boldsymbol{\alpha} \compo (\mathbf{b} \compo a) = \boldsymbol{\alpha} \compo (\mathbf{c} \compo a)$ (and neither $\boldsymbol{b} \compo a$ nor $\mathbf{c} \compo a$ is equivalent to $0$). Then
\begin{align*}
\<\boldsymbol{\alpha} \compo b_1, \ldots, \boldsymbol{\alpha} \compo b_n\> \compo a &= \<\boldsymbol{\alpha} \compo c_1, \ldots, \boldsymbol{\alpha} \compo c_n\> \compo a &&\text{by superassociativity} 
\intertext{so}
\<\boldsymbol{\alpha} \compo b_1, \ldots, \boldsymbol{\alpha} \compo b_n\> \compo \D_i\!a &= \<\boldsymbol{\alpha} \compo c_1, \ldots, \boldsymbol{\alpha} \compo c_n\> \compo \D_i\!a &&\text{for every }i\text{, by \eqref{injective}} 
\intertext{so}
\boldsymbol{\alpha} \compo \mathbf{b} \compo \D_i\!a &= \boldsymbol{\alpha} \compo \mathbf{c} \compo \D_i\!a &&\text{by superassociativity}
\intertext{so}
\boldsymbol{\alpha} \compo \<\D_1^n(\mathbf{b} \compo a)\> \compo b_i &= \boldsymbol{\alpha} \compo \<\D_1^n(\mathbf{c} \compo a)\> \compo c_i &&\text{by twisted laws for domain}
\end{align*}
from which we can derive
\begin{equation*}
\boldsymbol{\alpha} \compo \<\D_1^n(\mathbf{b} \compo a)\> \compo \<\D_1^n(\mathbf{c} \compo a)\> \compo b_i = \boldsymbol{\alpha} \compo \<\D_1^n(\mathbf{b} \compo a)\> \compo \<\D_1^n(\mathbf{c} \compo a)\> \compo c_i
\end{equation*}
using superassociativity and the commutativity and idempotency of the $\bullet_i$ operations. 

Since $\boldsymbol{b} \compo a$ is inequivalent to $0$, we know that $\A(\mathbf{b} \compo a) \notin U$ and so $\D(\mathbf{b} \compo a) \in U$. Similarly $\D(\mathbf{c} \compo a) \in U$. As $\alpha$, $\D(\mathbf{b} \compo a)$ and $\D(\mathbf{c} \compo a)$ are all in the ultrafilter $U$, we conclude, for every $i$, that $[b_i] = [c_i]$. Hence $\theta_U(a)$ is injective.
\end{proof}

The proof of \Cref{injectiverep} showed that if an element is represented as an injective function by \emph{any} representation (not just the one described in \Cref{thm}), then the element is an injective element. Hence the indexed quasiequations of \eqref{injective} are valid for algebras of injective $n$-ary partial functions. So \Cref{injectiverep} yields the following corollary.

\begin{corollary}\label{cor:1}
Adding \eqref{injective} to  \eqref{super}--\,\eqref{hide} gives a finite quasiequational axiomatisation of the class of $(\<\phantom{a}\>\compo, \A_1, \ldots, \A_n)$-algebras that are representable by injective $n$-ary partial functions.
\end{corollary}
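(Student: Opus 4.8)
The plan is to deduce this almost directly from \Cref{thm} and \Cref{injectiverep}, treating soundness and completeness separately and then checking finiteness.

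For soundness, suppose $\algebra{A}$ is representable by injective $n$-ary partial functions. An algebra of injective $n$-ary partial functions is in particular an algebra of $n$-ary partial functions, so $\algebra{A}$ is representable by $n$-ary partial functions and hence, by \Cref{thm}, validates \eqref{super}--\eqref{hide}. For the remaining axioms I would appeal to the observation recorded just before the statement: the proof of \Cref{injectiverep} shows that any element represented as an injective function by \emph{any} representation is an injective element, so every element of an algebra of injective $n$-ary partial functions satisfies the indexed quasiequations \eqref{injective}. Since this property transfers along the isomorphism witnessing representability, $\algebra{A}$ validates \eqref{injective} as well.

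For completeness, suppose $\algebra{A}$ validates \eqref{super}--\eqref{hide} together with \eqref{injective}. By \Cref{thm}, the map $\theta$ constructed there is a representation of $\algebra{A}$ by $n$-ary partial functions. Because $\algebra{A}$ validates \eqref{injective}, \emph{every} element of $\algebra{A}$ is an injective element, so by \Cref{injectiverep} the representation $\theta$ represents every element as an injective function. Hence $\theta$ is an isomorphism from $\algebra{A}$ onto an algebra of injective $n$-ary partial functions, which is a representation of the required kind. Finally, \eqref{injective} is a finite list of quasiequations (one for each index $i \in \{1, \ldots, n\}$), and \eqref{super}--\eqref{hide} is already a finite set of (quasi)equations, so together they form a finite quasiequational axiomatisation.

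I do not anticipate a genuine obstacle here: all the substantive content lives in \Cref{injectiverep} and the construction of \Cref{thm}. The only point deserving a line of care is matching the two notions of representability, namely ``representable by injective $n$-ary partial functions'' on the one hand and ``representable by $n$-ary partial functions with every element represented injectively'' on the other; but this equivalence is precisely what \Cref{injectiverep}, together with the remark that injectivity of a represented element forces it to be an injective element, supplies.
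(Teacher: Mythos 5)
Your proposal is correct and follows essentially the same route as the paper: soundness comes from \Cref{thm} together with the observation (from the first half of the proof of \Cref{injectiverep}) that any element represented as an injective function is an injective element, and completeness comes from applying \Cref{injectiverep} to the representation of \Cref{thm} when every element is injective. Your closing remark about matching the two notions of representability is exactly the point the paper's preceding paragraph is there to supply.
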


Since \Cref{cor:1} uses the same representation as \Cref{thm}, it again follows as a corollary that the finite representation property holds.

\begin{corollary}
The finite representation property holds for the signature $(\<\phantom{a}\>\compo, \A_1,\linebreak \ldots, \A_n)$ for representation by injective $n$-ary partial functions.
\end{corollary}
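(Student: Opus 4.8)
The plan is to observe that the representation invoked by \Cref{cor:1} is precisely the representation $\theta$ constructed in the proof of \Cref{thm}, and that this representation lives on a finite base whenever the algebra is finite. So suppose $\algebra{A}$ is a finite algebra that is representable by injective $n$-ary partial functions. By \Cref{cor:1}, $\algebra{A}$ validates \eqref{super}--\eqref{hide} together with the indexed quasiequations \eqref{injective}; in particular, every element of $\algebra{A}$ is an injective element.

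First I would apply \Cref{thm}: since $\algebra{A}$ validates \eqref{super}--\eqref{hide}, it has a representation $\theta \colon \algebra{A} \to \algebra{F}$ by $n$-ary partial functions. As remarked after \Cref{thm}, this $\theta$ is a disjoint union, indexed by the (at most $|\algebra{A}|^2$ many) pairs of distinct elements of $\algebra{A}$, of homomorphisms $\theta_U$ of the form described in \Cref{representation}; each such $\theta_U$ has as its base a set of $\sim_U$-classes, of which there are at most $|\algebra{A}|$. Hence the base of $\algebra{F}$ is finite, of size at most $|\algebra{A}|^3$.

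It then remains only to check that $\theta$ represents $\algebra{A}$ by \emph{injective} functions, and this is immediate from \Cref{injectiverep}, which tells us that $\theta$ represents as injective functions exactly the injective elements of $\algebra{A}$: since every element of $\algebra{A}$ is an injective element, $\theta(a)$ is an injective partial function for each $a \in \algebra{A}$. Thus $\theta$ is a representation of $\algebra{A}$ by injective $n$-ary partial functions on a finite base, as required. I do not expect any real obstacle here: the two ingredients — the finiteness of the base in the representation of \Cref{thm}, and the fact (\Cref{injectiverep}) that that same representation sends injective elements to injective functions — have both already been established, so the corollary is just their conjunction, exactly as in the partial-function case treated after \Cref{thm}.
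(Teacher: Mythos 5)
Your proposal is correct and follows essentially the same route as the paper: the corollary is obtained by observing that \Cref{cor:1} reuses the representation of \Cref{thm}, whose base has size at most the cube of $|\algebra{A}|$, and that \Cref{injectiverep} guarantees this representation sends the (by hypothesis, all) injective elements to injective functions. No gaps.
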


\section{Intersection}\label{section:intersection}

In this section we consider the signature $(\<\phantom{a}\>\compo, \A_1, \ldots, \A_n, \bmeet)$. We could search for extensions to the quasiequational axiomatisations of the previous sections. However the presence of intersection in the signature allows us to give equational axiomatisations, deducing the quasiequations that we need.

We first present some valid equations involving intersection.

\begin{proposition}\label{meetaxioms}
The following equations are valid for the class of $(\<\phantom{a}\>\compo, \A_1, \ldots,\linebreak \A_n, \bmeet)$-algebras representable by $n$-ary partial functions.
\begin{align}\label{meetidempotent}
a \bmeet a &= a\\
\label{meetcommutative}
a \bmeet b &= b \bmeet a\\
\label{distributive}
\mathbf{a} \compo (b \bmeet c) &= (\mathbf{a} \compo b) \bmeet (\mathbf{a} \compo c) \\
\label{commutative}
\<\D_1^n(a \bmeet b)\> \compo a &= a \bmeet b
\end{align}
\end{proposition}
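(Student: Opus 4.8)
The plan is to prove validity for an arbitrary square algebra of $n$-ary partial functions with base $X$, since validity of equations is inherited by subalgebras and products and every representable algebra embeds in a product of algebras with square representations (as used in the proof of \Cref{axioms}). Throughout, I would check the two inclusions separately for each equation, comparing the domains first and then the values.

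For \eqref{meetidempotent} and \eqref{meetcommutative} there is essentially nothing to do: $f \bmeet f$ and $f$ have the same $(n+1)$-tuples by definition, and $f \bmeet g$ and $g \bmeet f$ likewise, since `$(\vec x, y) \in f$ and $(\vec x, y) \in g$' is symmetric in $f$ and $g$. For \eqref{distributive}, I would argue that $(\vec x, z) \in \mathbf{a} \compo (b \bmeet c)$ iff there is a $\vec y$ with $(\vec x, y_i) \in a_i$ for each $i$ and $(\vec y, z) \in b \cap c$; since the $a_i$ are partial functions the witnessing $\vec y$ is uniquely determined by $\vec x$ (namely $y_i = a_i(\vec x)$), so this holds iff $(\vec x, z) \in (\mathbf{a}\compo b)$ and $(\vec x, z) \in (\mathbf{a}\compo c)$, i.e.\ iff $(\vec x, z) \in (\mathbf{a}\compo b)\bmeet(\mathbf{a}\compo c)$. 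The uniqueness of $\vec y$ is the only subtle point and is exactly what makes intersection distribute over composition on the left.

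For \eqref{commutative}, recall $\D_i \coloneqq \A_i^2$, so $\D_i(f)$ is the $i$th projection restricted to the domain of $f$, and hence (as noted for \eqref{newdomain}) $\<\D_1^n(g)\> \compo g = g$ for any $g$; applying this with $g = a \bmeet b$ immediately gives the right-hand side once we know $\<\D_1^n(a\bmeet b)\>\compo a$ and $\<\D_1^n(a\bmeet b)\>\compo a\bmeet b$ coincide. Concretely: $(\vec x, z) \in \<\D_1^n(a\bmeet b)\>\compo a$ iff $a \bmeet b$ is defined at $\vec x$ (so each $\D_j(a\bmeet b)(\vec x) = x_j$) and $a$ is defined at $\vec x$ with $z = a(\vec x)$. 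But if $a\bmeet b$ is defined at $\vec x$ then $a(\vec x) = (a\bmeet b)(\vec x)$, so $z = (a \bmeet b)(\vec x)$, giving $(\vec x, z) \in a\bmeet b$; conversely if $(\vec x, z) \in a\bmeet b$ then $a\bmeet b$ is defined at $\vec x$ and $a(\vec x) = z$, so $(\vec x, z)$ lies in the left-hand side.

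I do not expect a genuine obstacle here; the mild care needed is (i) the uniqueness-of-witness observation for \eqref{distributive}, and (ii) keeping straight that $\D_i$ is a \emph{double} antidomain and that $\D_i(a\bmeet b)$ being defined at $\vec x$ forces agreement of $a$, $b$ and $a\bmeet b$ at $\vec x$. The whole proof is a short case check on membership of $(n+1)$-tuples.
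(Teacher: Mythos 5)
Your proposal is correct: the paper itself disposes of these equations by noting they are well known or ``easy to see'' (citing Dudek and Trokhimenko for \eqref{distributive} and \eqref{commutative}), and your direct element-chasing verification on a square algebra supplies exactly the details being taken for granted, including the one genuinely load-bearing point, namely that the witnessing tuple $\vec y$ in the definition of composition is unique because the $a_i$ are partial functions, which is what makes \eqref{distributive} hold in both directions.
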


\begin{proof}
Equations \eqref{meetidempotent} and \eqref{meetcommutative} are both well-known properties of intersection. The validity of \eqref{distributive} and the validity of \eqref{commutative} are both easy to see and are noted in \cite{1018.20057}, where they appear as Equation (29) and Equation (28) respectively.
\end{proof}

We will include all the equational axioms of \Cref{section2} in our axiomatisation, that is \eqref{super}--\eqref{anti-twisted}, \eqref{newdomain} and \eqref{hide}, as well as including \eqref{meetidempotent}--\eqref{commutative}. All the consequences of \Cref{section2} will follow from our axiomatisation if only we can deduce \eqref{quasi}. Next we give three more valid equations whose inclusion enables us to do just that. Notice that \eqref{didempotent} was deduced without \eqref{quasi}, so is available to us.

We make use of the tie operations. Define $a \tie_i b \coloneqq \D_i(a \bmeet b) +_i \<\A_1^n\!a\> \compo \A_i\!b$, where $\alpha +_i \beta \coloneqq \A_i(\<\A_1^n\!\alpha\> \compo \A_i\!\beta)$. 

\begin{proposition}\label{meetaxioms2}
The following equations are valid for the class of $(\<\phantom{a}\>\compo, \A_1, \ldots,\linebreak \A_n, \bmeet)$-algebras representable by $n$-ary partial functions.
\begin{align}\label{equaliser}
\<a \tie_1^n b\> \compo a &= \<a \tie_1^n b\> \compo b\\
\label{long}
\boldsymbol{\alpha} \compo \D_i(\boldsymbol{\alpha} \compo (a \bmeet b)) +_i \boldsymbol{\alpha} \compo \<\A_1^n(\boldsymbol{\alpha} \compo a)\> \compo \A_i(\boldsymbol{\alpha} \compo b) &= \boldsymbol{\alpha} \compo (a \tie_i b) && \text{for every }i\\
\label{extra1}
\<\D_1^n\!a\> \compo (b \tie_i c) +_i \<\A_1^n\!a\> \compo (b \tie_i c) &= b \tie_i c && \text{for every }i
\end{align}
\end{proposition}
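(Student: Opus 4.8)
The plan is to mimic the proofs of \Cref{axioms} and \Cref{prop:hide}. Since the validity of equations is preserved by subalgebras and products, since every algebra representable by $n$-ary partial functions is a subalgebra of a product of algebras each representable as a square algebra, and since representations are isomorphisms, it suffices to verify \eqref{equaliser}, \eqref{long} and \eqref{extra1} in an arbitrary square algebra of $n$-ary partial functions with base $X$; write $\Lambda$ for the set of $n$-tuples of pairwise $E$-equivalent points of $X$. The whole argument then runs on a dictionary between terms and restrictions of the projection functions. For an $n$-ary partial function $g$: the function $\A_i\!g$ is the $i$th projection restricted to the set of tuples in $\Lambda$ at which $g$ is undefined (its \emph{antidomain}), $\D_i\!g$ is the $i$th projection restricted to $\mathrm{dom}(g)$, and $\mathrm{dom}(g)$ and the antidomain of $g$ partition $\Lambda$. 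If $c_1, \ldots, c_n$ are the restrictions of the $n$ projections to one common set $S \subseteq \Lambda$, then $\<c_1, \ldots, c_n\> \compo f$ is $f$ restricted to $S$; in particular $\<\D_1^n\!g\> \compo f$ and $\<\A_1^n\!g\> \compo f$ are $f$ restricted to $\mathrm{dom}(g)$ and to the antidomain of $g$ respectively; and if $\boldsymbol{\alpha}$ is the tuple of $\A_i$-images of an $\A$-element, represented by the restriction of the projections to some set $R \subseteq \Lambda$, then $\boldsymbol{\alpha} \compo f$ is $f$ restricted to $R$. Finally, unwinding $\alpha +_i \beta \coloneqq \A_i(\<\A_1^n\!\alpha\> \compo \A_i\!\beta)$ with these rules shows that if $\alpha$ and $\beta$ are the $i$th projection restricted to sets $S, S' \subseteq \Lambda$, then $\alpha +_i \beta$ is the $i$th projection restricted to $S \cup S'$.

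For \eqref{equaliser}, I would first observe that, for each $i$, the domain of $a \tie_i b$ viewed as a set of $n$-tuples is one and the same set $T \subseteq \Lambda$: namely the set of $E$-equivalent tuples at which $a$ and $b$ do not disagree (both undefined, or both defined and equal). By the dictionary, $\<a \tie_1^n b\> \compo a$ and $\<a \tie_1^n b\> \compo b$ are then the restrictions to $T$ of $a$ and of $b$; since $a$ and $b$ have the same domain within $T$ and agree there, these two restrictions coincide.

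For \eqref{extra1}, the plan is to note that the two summands on the left are $b \tie_i c$ restricted to $\mathrm{dom}(a)$ and to the antidomain of $a$; these two sets partition $\Lambda$, which contains the underlying tuple-set of $b \tie_i c$, so the $+_i$-join of the summands is $b \tie_i c$ itself. For \eqref{long}, write $R \subseteq \Lambda$ for the set representing $\boldsymbol{\alpha}$. Tracing the left-hand side with the dictionary, the first summand reduces to the $i$th projection restricted to $R \cap \{\boldsymbol{x} : a(\boldsymbol{x}) = b(\boldsymbol{x}) \text{ with both defined}\}$ and the second to the $i$th projection restricted to $R \cap \{\boldsymbol{x} : a \text{ and } b \text{ both undefined at } \boldsymbol{x}\}$; their $+_i$-join is therefore the $i$th projection restricted to $R \cap T$, which is precisely $\boldsymbol{\alpha} \compo (a \tie_i b)$.

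The only real difficulty I anticipate is bookkeeping, especially for \eqref{long}: the term is deeply nested, and at each stage one must check that the outer $\boldsymbol{\alpha}$-restrictions and the inner $\A_i$, $\D_i$ are being composed on sets already contained in $R$ (or in $\Lambda$), so that the restrictions accumulate cleanly. There is no conceptual obstacle; once the term-to-set dictionary is in place, each of \eqref{equaliser}, \eqref{long} and \eqref{extra1} is an obvious identity about unions and complements of definable sets of $n$-tuples.
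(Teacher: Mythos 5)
Your proposal is correct and follows essentially the same route as the paper: reduce to square algebras, establish that $+_i$ acts as union of domains on restrictions of the $i$th projection and that the defined $\tie_i$ is the concrete tie operation, then verify each equation by reasoning about the underlying sets of $n$-tuples (the paper phrases \eqref{long} and \eqref{extra1} as identities in the Boolean algebra of $\A_i$-elements and appeals to Boolean reasoning, which is the same computation you carry out directly).
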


\begin{proof}
We first need to convince ourselves that in an algebra of $n$-ary functions, $\tie_i$, as we have defined it, really does give the $i$th tie operation on its two arguments. And before we do that we need to see that if $\alpha$ and $\beta$ are restrictions of the $i$th projection, then $\alpha +_i \beta$ is the $i$th projection on the union of the domains of $\alpha$ and $\beta$. It suffices to prove these for the square algebras of $n$-ary functions.

In a square algebra of $n$-ary partial functions, with base $X$, the function $\alpha +_i \beta$ is by definition the $i$th projection restricted to  where $\<\A_1^n\!\alpha\> \compo \A_i\!\beta$ is not defined. Now $\<\A_1^n\!\alpha\> \compo \A_i\!\beta$ is defined precisely where $\A_1\!\alpha$ (or indeed any $\A_j\!\alpha$) and $\A_i\!\beta$ are both defined, which is those $n$-tuples  in the domains of neither $\alpha$ nor $\beta$. By De Morgan, $\alpha +_i \beta$ is as claimed.

Examining the definition of $a \tie_i b$, we note that $\D_i(a \bmeet b)$ is the $i$th projection restricted to where $a$ and $b$ are both defined and are equal, and $\<\A_1^n\!a\> \compo \A_i\!b$ is the $i$th projection on those $n$-tuples where neither $a$ nor $b$ are defined. Hence $a \tie_i b$, being defined as the result of applying the $+_i$ operation to these projections, is exactly the $i$th tie of $a$ and $b$.

Now for \eqref{equaliser}. Suppose that $\<a \tie_1^n b\> \compo a$ is defined at an $n$-tuple $\boldsymbol{x}$, with value $z$. This means $a \tie_1 b, \ldots, a \tie_n b$ are all defined at $\boldsymbol{x}$ and $a$ is defined at $\<(a \tie_1 b)(\boldsymbol{x}), \ldots, (a \tie_n b)(\boldsymbol{x})\> = \boldsymbol{x}$, with value $z$. Then as $a$ and $a \tie_1 b$ are both defined at $\boldsymbol{x}$, it must be that $b$ is also defined at $\boldsymbol{x}$ with the same value as $a$, namely $z$. It follows that $\<a \tie_1^n b\> \compo b$ is defined at $\boldsymbol{x}$, with value $z$. We conclude that $\<a \tie_1^n b\> \compo a \subseteq \<a \tie_1^n b\> \compo b$. Similarly (utilising the symmetry of the tie operations on $n$-ary partial functions) $\<a \tie_1^n b\> \compo a \supseteq \<a \tie_1^n b\> \compo b$ and so \eqref{equaliser} is valid.

We know that in an algebra of $n$-ary partial functions the $\A_i$-elements, with $\bmeet_i$ as in \Cref{section2} as product and $\A_i$ as complement, form a Boolean algebra. We also saw, in the proof of \Cref{meetaxioms}, that $+_i$ acts as the Boolean sum on the $\A_i$-elements. Then \eqref{long} is the statement that
\begin{equation}\label{34}
\alpha_i \bullet_i \D_i(\boldsymbol{\alpha} \compo (a \bmeet b)) +_i \alpha_i \bullet_i \A_i(\boldsymbol{\alpha} \compo  a) \bullet_i \A_i(\boldsymbol{\alpha} \compo  b ) = \alpha_i \bullet_i (\D_i(a \bmeet b) +_i \A_i\!a \bullet_i \A_i\!b)
\end{equation}
holds for every $i$. Now $\D_i(\boldsymbol{\alpha} \compo (a \bmeet b))$ is easily seen to be equal to $\boldsymbol{\alpha} \compo \D_i(a \bmeet b)$, which is the definition of $\alpha_i \bullet_i \D_i(a \bmeet b)$. It is similarly easy to see that $\A_i(\boldsymbol{\alpha} \compo a) = \A_i\!\alpha_i +_i \A_i\!a$ and $\A_i(\boldsymbol{\alpha} \compo b) = \A_i\!\alpha_i +_i \A_i\!b$. After making these substitutions, \eqref{34} follows by Boolean reasoning.

\Cref{extra1} is the statement that
\begin{equation*}
\A_i\!\A_i\!a \bullet_i (b \tie_i c) +_i \A_i\!a \bullet_i (b \tie_i c) = b \tie_i c
\end{equation*}
holds for every $i$. This follows directly by Boolean reasoning.
\end{proof}

The equations \eqref{super}--\eqref{anti-twisted}, \eqref{newdomain}, \eqref{hide} and \eqref{meetidempotent}--\eqref{extra1} will form our axiomatisation. \Cref{equaliser} says that the $\A$-element $a \tie b$ is an `equaliser' of $a$ and $b$. In order to deduce \eqref{quasi}, we start by showing that $a \tie b$ is the greatest such equaliser.\footnote{Note though that we have not yet deduced that the sets of $\A_i$-elements, for each $i$, form isomorphic Boolean algebras nor even that they are partially ordered by the $\bullet_i$ operations of \Cref{section2}.}

\begin{lemma}\label{lemma:max}
The following indexed quasiequations are consequences of \eqref{super}--\,\eqref{anti-twisted}, \eqref{newdomain}, \eqref{hide} and \eqref{meetidempotent}--\,\eqref{extra1}.
\begin{align}\label{intersect}
\boldsymbol{\alpha} \compo a = \boldsymbol{\alpha} \compo b &\limplies \boldsymbol{\alpha} \compo (a \tie_i b) = \alpha_i &\text{for every }i
\end{align}
\end{lemma}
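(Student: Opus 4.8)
The plan is to reduce the general case to the ``diagonal'' case $a \tie_i a$, which can be evaluated outright, and to do so using only equations available before \eqref{quasi} has been deduced (in particular, without appealing to the Boolean-algebra structure of the $\A_i$-elements from \Cref{section2}, which rests on \eqref{quasi}).

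First I would record two unconditional preliminary identities. \emph{(1) $a \tie_i a = \pi_i$ for every element $a$ and every $i$.} By \eqref{meetidempotent} we have $a \bmeet a = a$, and by \eqref{didempotent} we have $\<\A_1^n a\> \compo \A_i a = \A_i a$, so unwinding the definition of $\tie_i$ gives $a \tie_i a = \D_i a +_i \A_i a$; unwinding $+_i$ and using $\A_i \A_i a = \D_i a$ this becomes $\A_i(\<\A_1^n(\D_i a)\> \compo \D_i a)$, and $\<\A_1^n(\D_i a)\> \compo \D_i a = 0$ by \eqref{zero_n}, so $a \tie_i a = \A_i 0 = \pi_i$. \emph{(2) $\<\A_1^n c\> \compo \pi_i = \A_i c$ for every element $c$ and every $i$.} By the $i$th (restricted) twisted law \eqref{restricted}, $\<\A_1^n c\> \compo \A_i 0 = \<\A_1^n(\<\A_1^n c\> \compo 0)\> \compo \A_i c$, and since $\<\A_1^n c\> \compo 0 = 0$ by \eqref{right} this equals $\<\A_1^n 0\> \compo \A_i c = \boldsymbol{\pi} \compo \A_i c = \A_i c$ by \eqref{pi}. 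Taking $c$ with $\alpha = \A_1 c$, so that $\boldsymbol{\alpha} = \<\A_1^n c\>$ and $\alpha_i = \A_i c$, identities (1) and (2) combine to give $\boldsymbol{\alpha} \compo (a \tie_i a) = \alpha_i$, with no hypothesis on $a$.

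It then suffices, assuming the antecedent $\boldsymbol{\alpha} \compo a = \boldsymbol{\alpha} \compo b$, to prove $\boldsymbol{\alpha} \compo (a \tie_i b) = \boldsymbol{\alpha} \compo (a \tie_i a)$. Here I would apply \eqref{long} to each of $a \tie_i b$ and $a \tie_i a$ and compare the resulting right-hand sides summand by summand. The first summands coincide because $\boldsymbol{\alpha} \compo (a \bmeet b) = (\boldsymbol{\alpha} \compo a) \bmeet (\boldsymbol{\alpha} \compo b) = (\boldsymbol{\alpha} \compo a) \bmeet (\boldsymbol{\alpha} \compo a) = \boldsymbol{\alpha} \compo (a \bmeet a)$, using \eqref{distributive} twice with the antecedent in the middle; the second summands coincide because $\A_i(\boldsymbol{\alpha} \compo b) = \A_i(\boldsymbol{\alpha} \compo a)$, directly from the antecedent. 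Hence $\boldsymbol{\alpha} \compo (a \tie_i b) = \boldsymbol{\alpha} \compo (a \tie_i a) = \alpha_i$, as required.

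The individual manipulations are routine; the only real care needed is to stay within the equational consequences available before \eqref{quasi} is in hand — which is exactly why (1) and (2) are established by hand from \eqref{meetidempotent}, \eqref{didempotent}, \eqref{zero_n}, \eqref{right}, \eqref{pi} and the restricted twisted laws rather than by quoting the Boolean-algebra facts of \Cref{section2}, and why \eqref{long} (a ``relativised'' distributivity built for this purpose) is the tool that lets us push the relativisation by $\boldsymbol{\alpha}$ through the tie. I do not anticipate any genuine obstacle beyond recognising the diagonal reduction in the first place.
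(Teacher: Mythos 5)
Your proof is correct and follows essentially the same route as the paper's: both use \eqref{long} together with \eqref{distributive} and the antecedent to reduce $\boldsymbol{\alpha} \compo (a \tie_i b)$ to $\boldsymbol{\alpha} \compo (a \tie_i a)$, and then evaluate the diagonal case via \eqref{meetidempotent}, \eqref{didempotent}, \eqref{zero_n} and the twisted law. The only cosmetic difference is that you package the final evaluation as the two standalone identities $a \tie_i a = \pi_i$ and $\boldsymbol{\alpha} \compo \pi_i = \alpha_i$, whereas the paper carries the prefix $\boldsymbol{\alpha} \compo$ through a single chain of equalities.
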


\begin{proof}
Assume $\boldsymbol{\alpha} \compo a = \boldsymbol{\alpha} \compo b$. Then we have
\begin{align*}
&\quad\,\,\boldsymbol{\alpha} \compo (a \tie_i b)\\ 
&= \boldsymbol{\alpha} \compo \D_i(\boldsymbol{\alpha} \compo (a \bmeet b)) +_i \boldsymbol{\alpha} \compo \<\A_1^n(\boldsymbol{\alpha} \compo a)\> \compo \A_i(\boldsymbol{\alpha} \compo b) && \text{by \eqref{long}}\\
&= \boldsymbol{\alpha} \compo \D_i(\boldsymbol{\alpha} \compo (a \bmeet b)) +_i \boldsymbol{\alpha} \compo \<\A_1^n(\boldsymbol{\alpha} \compo a)\> \compo \A_i(\boldsymbol{\alpha} \compo a) && \text{by assumption}\\
&= \boldsymbol{\alpha} \compo \D_i((\boldsymbol{\alpha} \compo a) \bmeet (\boldsymbol{\alpha} \compo b)) +_i \boldsymbol{\alpha} \compo \<\A_1^n(\boldsymbol{\alpha} \compo a)\> \compo \A_i(\boldsymbol{\alpha} \compo a) && \text{by \eqref{distributive}}\\
&= \boldsymbol{\alpha} \compo \D_i((\boldsymbol{\alpha} \compo a) \bmeet (\boldsymbol{\alpha} \compo a)) +_i \boldsymbol{\alpha} \compo \<\A_1^n(\boldsymbol{\alpha} \compo a)\> \compo \A_i(\boldsymbol{\alpha} \compo a) && \text{by assumption}\\
&=\boldsymbol{\alpha} \compo \D_i(\boldsymbol{\alpha} \compo (a \bmeet a)) +_i \boldsymbol{\alpha} \compo \<\A_1^n(\boldsymbol{\alpha} \compo a)\> \compo \A_i(\boldsymbol{\alpha} \compo a) && \text{by \eqref{distributive}}\\
&= \boldsymbol{\alpha} \compo (a \tie_i a) && \text{by \eqref{long}}\\
&= \boldsymbol{\alpha} \compo (\D_i(a \bmeet a) +_i \<\A_1^n\!a\> \compo \A_i\!a) && \text{by definition of $\tie_i$}\\
&= \boldsymbol{\alpha} \compo (\D_i(a \bmeet a) +_i \A_i\!a) && \text{by \eqref{didempotent}}\\
&= \boldsymbol{\alpha} \compo (\D_i\!a +_i \A_i\!a) && \text{idempotency of $\bmeet$}\\
&= \boldsymbol{\alpha} \compo \A_i(\<\A_1^n\!\D_i\!a\> \compo \A_i\!\A_i\!a) && \text{by definition of $+_i$}\\
&= \boldsymbol{\alpha} \compo \A_i(\<\A_1^n\!\D_i\!a\> \compo \D_i\!a) && \text{by definition of $\D_i$}\\
&= \boldsymbol{\alpha} \compo \A_i\!0&& \text{by \eqref{zero_n}}\\
&= \<\A_1^n(\boldsymbol{\alpha} \compo 0)\> \compo \alpha_i && \text{by $i$th twisted law}\\
&= \<\A_1^n\!0\> \compo \alpha_i && \text{by \eqref{right}}\\
&= \boldsymbol{\pi} \compo \alpha_i && \text{by definition of $\boldsymbol{\pi}$}\\
&= \alpha_i&& \text{by \eqref{pi}}
\end{align*}
which is the required conclusion.
\end{proof}

Now it is straightforward to deduce \eqref{quasi}.

\begin{lemma}\label{lemma:quasi}
\Cref{quasi} is a consequence of \eqref{super}--\,\eqref{anti-twisted}, \eqref{newdomain}, \eqref{hide} and \eqref{meetidempotent}--\,\eqref{extra1}.
\end{lemma}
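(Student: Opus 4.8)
The plan is to exploit the equalisers $b \tie_i c$ produced by the tie operations. Writing (H1) for the hypothesis $\<\D_1^n\!a\> \compo b = \<\D_1^n\!a\> \compo c$ and (H2) for $\<\A_1^n\!a\> \compo b = \<\A_1^n\!a\> \compo c$, I aim to show that under (H1) and (H2) each $b \tie_i c$ collapses to the full projection $\pi_i$; once that is done, \eqref{equaliser} instantiated at $b, c$ reads $\boldsymbol{\pi} \compo b = \boldsymbol{\pi} \compo c$, and \eqref{pi} immediately gives $b = c$, the desired conclusion.

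To reach $b \tie_i c = \pi_i$ I would first apply \eqref{intersect} twice. By \eqref{hide} one has $\A_j\!\A_1\!a = \A_j\!\A_j\!a = \D_j\!a$ for every $j$, so the tuple $\<\D_1^n\!a\>$ is the same as $\<\A_1^n\!(\A_1\!a)\>$ and is therefore an admissible instance of $\boldsymbol{\alpha}$ with $i$th component $\D_i\!a$; feeding this instance and (H1) into \eqref{intersect} gives $\<\D_1^n\!a\> \compo (b \tie_i c) = \D_i\!a$ for every $i$, while taking $\boldsymbol{\alpha} = \<\A_1^n\!a\>$ together with (H2) gives $\<\A_1^n\!a\> \compo (b \tie_i c) = \A_i\!a$ for every $i$. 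Substituting these two identities into \eqref{extra1} collapses it to $b \tie_i c = \D_i\!a +_i \A_i\!a$ for every $i$. It then remains only to evaluate the right-hand side: unwinding the definitions of $+_i$ and of $\D_i = \A_i^2$ turns $\D_i\!a +_i \A_i\!a$ into $\A_i\bigl(\<\A_1^n\!(\D_i\!a)\> \compo \D_i\!a\bigr)$, and here $\<\A_1^n\!(\D_i\!a)\> \compo \D_i\!a$ is a direct substitution instance of the left-hand side of \eqref{zero_n}, hence $0$. Thus $\D_i\!a +_i \A_i\!a = \A_i\!0 = \pi_i$, so $b \tie_i c = \pi_i$ for every $i$, and the argument finishes as above.

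I do not expect a genuine obstacle here: the real work was in deducing \eqref{intersect} in \Cref{lemma:max}, and once that is available \eqref{quasi} drops out as a short assembly of \eqref{intersect}, \eqref{extra1}, \eqref{equaliser} and \eqref{pi}. The only points demanding any care are the two small uses of \eqref{hide} — recognising $\<\D_1^n\!a\>$ as a legitimate value of $\boldsymbol{\alpha}$ and correctly reading off its $i$th component as $\D_i\!a$ — and checking that the term $\<\A_1^n\!(\D_i\!a)\> \compo \D_i\!a$ is literally of the shape $\<\A_1^n\!u\> \compo u$ so that \eqref{zero_n} applies.
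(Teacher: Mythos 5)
Your proposal is correct and follows essentially the same route as the paper's own proof: both use \eqref{intersect} together with \eqref{extra1} to collapse $b \tie_i c$ to $\D_i\!a +_i \A_i\!a$, evaluate this to $\pi_i$ via the definition of $+_i$ and \eqref{zero_n}, and then conclude via \eqref{equaliser} and \eqref{pi}. The care you take in justifying $\<\D_1^n\!a\>$ as an admissible instance of $\boldsymbol{\alpha}$ via \eqref{hide} is exactly the right point to check and matches the paper's implicit use of the same fact.
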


\begin{proof}
Suppose that $\< \D_1^n\!a\> \compo b = \< \D_1^n\!a\> \compo c$ and $\< \A_1^n\!a\> \compo b = \< \A_1^n\!a\> \compo c$. Then we have
\begin{align}\label{tiepi}
b \tie_i c &= \pi_i && \text{for every $i$}
\end{align}
because
\begin{align*}
b \tie_i c &= \<\D_1^n\!a\> \compo (b \tie_i c) +_i \<\A_1^n\!a\> \compo (b \tie_i c) && \text{by \eqref{extra1}}\\
&= \<\D_1^n\!a\> \compo (b \tie_i c) +_i \A_i\!a && \text{by \eqref{intersect}}\\
&= \D_i\!a +_i \A_i\!a && \text{by \eqref{intersect}}\\
&=  \A_i(\<\A_1^n\!\D_i\!a\> \compo \A_i\!\A_i\!a) && \text{by the definition of $+_i$}\\
&=  \A_i(\<\A_1^n\!\D_i\!a\> \compo \D_i\!a) && \text{by the definition of $\D_i$}\\
&=  \A_i\!0&& \text{by \eqref{zero_n}}\\
&= \pi_i && \text{by the definition of $\pi_i$}
\end{align*}
and so 
\begin{align*}
b &= \boldsymbol{\pi} \compo b && \text{by \eqref{pi}}\\
&= \<b \tie_1^n c\> \compo b && \text{by \eqref{tiepi}}\\
&= \<b \tie_1^n c\> \compo c && \text{by \eqref{equaliser}}\\
&= \boldsymbol{\pi} \compo c && \text{by \eqref{tiepi}}\\
&= c && \text{by \eqref{pi}}
\end{align*}
and hence \eqref{quasi} holds.
\end{proof}

We are now in a position to state and prove our representation theorem.

\begin{theorem}\label{thm2}
The class of $(\<\phantom{a}\>\compo, \A_1, \ldots, \A_n, \bmeet)$-algebras that are representable by $n$-ary partial functions is a variety, finitely axiomatised by equations \eqref{super}--\,\eqref{anti-twisted}, \eqref{newdomain} and \eqref{hide} together with \eqref{meetidempotent}--\,\eqref{extra1}.
\end{theorem}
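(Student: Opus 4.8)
The plan is to reduce this representation theorem to \Cref{thm} by showing that the equational axiomatisation given here is equivalent, over algebras of the signature $(\<\phantom{a}\>\compo, \A_1, \ldots, \A_n, \bmeet)$, to the quasiequational axiomatisation of \Cref{thm} augmented by equations forcing $\bmeet$ to be represented correctly. First I would establish soundness: every algebra representable by $n$-ary partial functions validates \eqref{super}--\eqref{anti-twisted}, \eqref{newdomain}, \eqref{hide} and \eqref{meetidempotent}--\eqref{extra1}. The first group is \Cref{axioms} and \Cref{prop:hide} (plus the already-noted validity of \eqref{newdomain}), while \eqref{meetidempotent}--\eqref{commutative} is \Cref{meetaxioms} and \eqref{equaliser}--\eqref{extra1} is \Cref{meetaxioms2}; so soundness is immediate from results already proved.

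For completeness, let $\algebra{A}$ be an algebra of the given signature validating all the listed equations. By \Cref{lemma:quasi}, $\algebra{A}$ validates \eqref{quasi}, and hence (since all other axioms of \Cref{thm} are among our equations) $\algebra{A}$ validates \eqref{super}--\eqref{hide}. Therefore \Cref{thm} applies: the reduct of $\algebra{A}$ to $(\<\phantom{a}\>\compo, \A_1, \ldots, \A_n)$ has a representation $\theta$ by $n$-ary partial functions, and indeed $\theta$ is the representation built in \Cref{representation} and \Cref{thm}. It remains to verify that this $\theta$ also represents $\bmeet$ correctly, i.e. that $\theta(a \bmeet b) = \theta(a) \bmeet \theta(b)$ for all $a, b \in \algebra{A}$, where the right-hand $\bmeet$ is set-theoretic intersection of partial functions. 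Since a disjoint union of functions respects intersection as long as each factor does, it suffices to check this for each $\theta_U$ with $U$ an ultrafilter of $\A$-elements.

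So fix $U$ and work with $\theta_U$ as in \Cref{representation}. For the inclusion $\theta_U(a \bmeet b) \subseteq \theta_U(a) \bmeet \theta_U(b)$: if $\theta_U(a \bmeet b)$ is defined at $[\boldsymbol{c}]$ then $\boldsymbol{c} \compo (a \bmeet b)$ is inequivalent to $0$; using \eqref{distributive} this equals $(\boldsymbol{c} \compo a) \bmeet (\boldsymbol{c} \compo b)$, and using \eqref{commutative} (which gives $\<\D_1^n(x \bmeet y)\> \compo x = x \bmeet y$, so $x \bmeet y \leq x$ and $\leq y$ in the order $\leq$) together with the fact that $\theta_U$ is order preserving and maps $0$ to the empty function, one deduces $\boldsymbol{c} \compo a$ and $\boldsymbol{c} \compo b$ are each inequivalent to $0$ and that $[\boldsymbol{c} \compo (a \bmeet b)]$, $[\boldsymbol{c} \compo a]$ and $[\boldsymbol{c} \compo b]$ all coincide (since an $\A$-element of $U$ witnessing $\boldsymbol{\gamma} \compo x = \boldsymbol{\gamma} \compo (x \bmeet y)$ comes from \eqref{commutative} after composing with $\<\D_1^n(\boldsymbol{c}\compo(a\bmeet b))\>$, which lies in $U$ because $\A(\boldsymbol{c}\compo(a\bmeet b)) \notin U$). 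Hence $\theta_U(a)$ and $\theta_U(b)$ are both defined at $[\boldsymbol{c}]$ with the common value $\theta_U(a \bmeet b)([\boldsymbol{c}])$, giving the inclusion. For the reverse inclusion $\theta_U(a) \bmeet \theta_U(b) \subseteq \theta_U(a \bmeet b)$: suppose $\theta_U(a)$ and $\theta_U(b)$ are both defined at $[\boldsymbol{c}]$ with a common value, so there is $\alpha \in U$ with $\boldsymbol{\alpha} \compo (\boldsymbol{c} \compo a) = \boldsymbol{\alpha} \compo (\boldsymbol{c} \compo b)$ and $[\boldsymbol{c}\compo a]=[\boldsymbol{c}\compo b]\neq[0]$. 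Setting $\boldsymbol{\gamma} := \boldsymbol{\alpha} \compo \boldsymbol{c}$ (componentwise), superassociativity gives $\boldsymbol{\gamma}\compo a = \boldsymbol{\gamma}\compo b$; but the $\gamma_i$ need not be $\A$-elements, so the clean route is instead to apply \Cref{lemma:max}, \eqref{intersect}, after first passing to a genuine $\A$-element in $U$: take $\delta := \alpha \bullet \D(\boldsymbol{c}\compo a) \in U$ and note, using the twisted laws for domain and superassociativity, that $\boldsymbol{\delta}\compo\boldsymbol{c}$ can be rewritten with the relevant domain elements so that \eqref{intersect} applies to yield $\boldsymbol{\delta}'\compo(a\tie_i b)=\delta_i'$ for a suitable $\A$-element $\delta' \in U$; combined with \eqref{equaliser} this forces $\boldsymbol{\delta}'\compo a = \boldsymbol{\delta}'\compo b$ and, via \eqref{commutative} and \eqref{distributive}, that $\boldsymbol{c}\compo(a\bmeet b)$ is inequivalent to $0$ and $[\boldsymbol{c}\compo(a\bmeet b)] = [\boldsymbol{c}\compo a]$, so $\theta_U(a\bmeet b)$ is defined at $[\boldsymbol{c}]$ with the right value. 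The separation property of \Cref{thm} is inherited verbatim, so the resulting $\theta$ is an isomorphism onto an algebra of $n$-ary partial functions of the full signature. Finally, the class is a variety because it is now exhibited as the model class of a finite set of equations.

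The main obstacle is the reverse inclusion for $\bmeet$, specifically handling the fact that the natural "witness" $\boldsymbol{\alpha}\compo\boldsymbol{c}$ produced by unwinding $\theta_U(a)([\boldsymbol{c}])=\theta_U(b)([\boldsymbol{c}])$ is a tuple of arbitrary elements rather than of $\A$-elements, so that \eqref{intersect} (which is phrased for $\A$-elements $\boldsymbol{\alpha}$) is not directly applicable; the fix is to replace the witness by an honest $\A$-element of $U$ — using that $\D(\boldsymbol{c}\compo a), \D(\boldsymbol{c}\compo b) \in U$ and manipulating via the twisted laws for domain, \eqref{newdomain} and \eqref{morecommutative} — before invoking \Cref{lemma:max}. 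This is exactly the kind of bookkeeping already exemplified in the proof of \Cref{proper}, so it is routine but needs care. Everything else (soundness, the appeal to \Cref{lemma:quasi} and then \Cref{thm}, the easy inclusion, and the variety conclusion) is immediate from results established above.
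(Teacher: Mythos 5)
Your overall architecture is exactly the paper's: soundness from \Cref{axioms}, \Cref{prop:hide}, \Cref{meetaxioms} and \Cref{meetaxioms2}; completeness by using \Cref{lemma:quasi} to recover \eqref{quasi}, invoking \Cref{thm} for the reduct, and then checking that each $\theta_U$ from \Cref{representation} represents $\bmeet$ correctly. Your argument for the inclusion $\theta_U(a \bmeet b) \subseteq \theta_U(a) \cap \theta_U(b)$ is also essentially the paper's (compose with $\<\D_1^n(\boldsymbol{c}\compo(a\bmeet b))\>$, which lies in $U$, and use \eqref{distributive}, \eqref{commutative} and \eqref{newdomain}).

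The gap is in your treatment of the other inclusion, $\theta_U(a) \cap \theta_U(b) \subseteq \theta_U(a\bmeet b)$. The route you sketch through \Cref{lemma:max} is circular: \eqref{intersect} has the hypothesis $\boldsymbol{\delta}'\compo a = \boldsymbol{\delta}'\compo b$ for a tuple of $\A$-elements, which is precisely the conclusion you later claim to extract from it, and no amount of rewriting $\boldsymbol{\alpha}\compo\boldsymbol{c}$ with domain elements will produce such a $\delta'$ --- an $\A$-element $\delta'\in U$ with $\boldsymbol{\delta}'\compo a = \boldsymbol{\delta}'\compo b$ would say $a\sim_U b$, which is far too strong when $\theta_U(a)$ and $\theta_U(b)$ merely agree at the single point $[\boldsymbol{c}]$. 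The difficulty you flag as "the main obstacle" dissolves once you unwind the hypothesis correctly: $([\boldsymbol{c}],[d])\in\theta_U(a)\cap\theta_U(b)$ gives $\A$-elements $\alpha,\beta\in U$ with $\boldsymbol{\alpha}\compo(\boldsymbol{c}\compo a)=\boldsymbol{\alpha}\compo d$ and $\boldsymbol{\beta}\compo(\boldsymbol{c}\compo b)=\boldsymbol{\beta}\compo d$, where $[d]$ is the \emph{common value}; replacing both by $\alpha\bullet\beta$ one computes
\[
\boldsymbol{\alpha}\compo(\boldsymbol{c}\compo(a\bmeet b)) = (\boldsymbol{\alpha}\compo\boldsymbol{c}\compo a)\bmeet(\boldsymbol{\alpha}\compo\boldsymbol{c}\compo b) = (\boldsymbol{\alpha}\compo d)\bmeet(\boldsymbol{\alpha}\compo d)=\boldsymbol{\alpha}\compo d
\]
by \eqref{distributive} and \eqref{meetidempotent}, so $[\boldsymbol{c}\compo(a\bmeet b)]=[d]\neq[0]$. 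This is the paper's argument; the tie operations and \eqref{intersect}/\eqref{equaliser} are needed only once, inside \Cref{lemma:quasi} to derive \eqref{quasi}, and play no role in verifying that $\bmeet$ is represented correctly.
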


\begin{proof}
Let $\algebra{A}$ be an algebra of the signature $(\<\phantom{a}\>\compo, \A_1, \ldots, \A_n, \bmeet)$ validating the specified equations. We will show that, for any ultrafilter $U$ of $\A$-elements, the map $\theta_U$ described in \Cref{representation} represents intersection correctly. The result follows.

We first show that $\theta_U(a) \cap \theta_U(b) \subseteq \theta_U(a \bmeet b)$, for all $a, b \in \algebra{A}$. Suppose that $([\mathbf{c}], [d]) \in \theta_U(a) \cap \theta_U(b)$. Then there is an $\alpha \in U$ with $\boldsymbol{\alpha} \compo (\mathbf{c} \compo a) = \boldsymbol{\alpha} \compo d$ and a $\beta \in U$ with $\boldsymbol{\beta} \compo (\mathbf{c} \compo b) = \boldsymbol{\beta} \compo d$. As $U$ is an ultrafilter we may assume $\alpha = \beta$. Then
\begin{align*}
\boldsymbol{\alpha} \compo (\boldsymbol{c} \compo (a \bmeet b)) &= \boldsymbol{\alpha} \compo ((\boldsymbol{c} \compo a) \bmeet (\boldsymbol{c} \compo b)) && \text{by distributivity of $ \<\phantom{a}\>\compo$ over $\bmeet$}\\
&=(\boldsymbol{\alpha} \compo (\boldsymbol{c} \compo a)) \bmeet (\boldsymbol{\alpha} \compo (\boldsymbol{c} \compo b)) && \text{by distributivity of $ \<\phantom{a}\>\compo$ over $\bmeet$}\\
&=(\boldsymbol{\alpha} \compo d) \bmeet (\boldsymbol{\alpha} \compo d) && \text{by equality of the factors}\\
&= \boldsymbol{\alpha} \compo d && \text{by idempotency of $\bmeet$}
\end{align*}
and hence $[\mathbf{c} \compo (a \bmeet b)] = [d]$. This says that $([\mathbf{c}], [d]) \in \theta_U(a \bmeet b)$, since we know that $[d] \neq [0]$. We conclude that $\theta_U(a) \cap \theta_U(b) \subseteq \theta_U(a \bmeet b)$.

We now show that the reverse inclusion, $\theta_U(a \bmeet b) \subseteq \theta_U(a) \cap \theta_U(b)$, holds. Suppose that $([\mathbf{c}], [d]) \in \theta_U(a \bmeet b)$. This means that $[\boldsymbol{c} \compo (a \bmeet b)] \neq [0]$, equivalently $\D(\boldsymbol{c} \compo (a \bmeet b)) \in U$, and that $[d] = [\boldsymbol{c} \compo (a \bmeet b)]$. Then
\begin{align*}
\<\D_1^n(\boldsymbol{c} \compo (a \bmeet b))\> \compo (\boldsymbol{c} \compo a)&= \<\D_1^n((\boldsymbol{c} \compo a) \bmeet (\boldsymbol{c} \compo b))\> \compo (\boldsymbol{c} \compo a) && \text{by \eqref{distributive}}\\
&= (\boldsymbol{c} \compo a) \bmeet (\boldsymbol{c} \compo b) && \text{by \eqref{commutative}}\\
&= \boldsymbol{c} \compo (a \bmeet b) && \text{by \eqref{distributive}}\\
&= \<\D_1^n(\boldsymbol{c} \compo (a \bmeet b))\> \compo (\boldsymbol{c} \compo (a \bmeet b)) &&\text{by \eqref{newdomain}}\\
\end{align*}
and so $[\boldsymbol{c} \compo a] = [\boldsymbol{c} \compo (a \bmeet b)] = [d] \neq [0]$, which tells us $([\mathbf{c}], [d]) \in \theta_U(a)$. Similarly and using commutativity of $\bmeet$ we get $([\mathbf{c}], [d]) \in \theta_U(b)$ and so $([\mathbf{c}], [d]) \in \theta_U(a) \cap \theta_U(b)$. We conclude that $\theta_U(a \bmeet b) \subseteq \theta_U(a) \cap \theta_U(b)$, completing the proof.
\end{proof}

With the aid of intersection, we can also replace the indexed quasiequations of \eqref{injective} to give an equational axiomatisation for the case of injective $n$-ary partial functions.

\begin{proposition}\label{prop:tie}
The representation used in the proof of \Cref{thm2} represents an element $a$ as an injective function if and only if it satisfies the following indexed equations.
\begin{align}\label{tie-injective}
\<\D_1^n((\boldsymbol{b} \compo a) \bmeet ( \boldsymbol{c} \bmeet a))\> \compo \A_i(b_i \tie_i c_i) &= 0 && \text{for all }i
\end{align}
\end{proposition}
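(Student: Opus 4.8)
The plan is to prove the two implications separately, in both cases translating between the concrete condition of injectivity and the syntactic condition \eqref{tie-injective} via the representation $\theta_U$ (and then, for the reverse direction, via disjoint unions). The key observation throughout is that, for $n$-ary partial functions, the tie $b_i \tie_i c_i$ is precisely the $i$th projection restricted to those tuples where $b_i$ and $c_i$ do not disagree, so $\A_i(b_i \tie_i c_i)$ is the $i$th projection on the tuples where exactly one of $b_i$, $c_i$ is defined, or both are defined with different values.

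First I would prove that if $\theta_U$ represents $a$ as an injective function then \eqref{tie-injective} holds. The plan is to reason contrapositively inside an algebra of $n$-ary partial functions: suppose the left-hand side of \eqref{tie-injective} is nonzero, so there is a tuple $\boldsymbol{x}$ on which $(\boldsymbol{b} \compo a) \bmeet (\boldsymbol{c} \compo a)$ is defined (hence $\boldsymbol{b}\compo a$ and $\boldsymbol{c}\compo a$ are defined at $\boldsymbol{x}$ and agree there) while $\A_i(b_i \tie_i c_i)$ is defined at $\boldsymbol{x}$ for some $i$, meaning $b_i$ and $c_i$ disagree at $\boldsymbol{x}$. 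Since $\boldsymbol{b}\compo a$ and $\boldsymbol{c}\compo a$ are both defined at $\boldsymbol{x}$, all the $b_j$ and $c_j$ are defined at $\boldsymbol{x}$, and $a(b_1(\boldsymbol{x}),\ldots,b_n(\boldsymbol{x})) = a(c_1(\boldsymbol{x}),\ldots,c_n(\boldsymbol{x}))$; injectivity of $a$ then forces $b_j(\boldsymbol{x}) = c_j(\boldsymbol{x})$ for every $j$, in particular $b_i$ and $c_i$ agree at $\boldsymbol{x}$ — a contradiction. Applying this to the representation $\theta_U$ of a representable algebra (pulling elements back through $\theta_U$) shows \eqref{tie-injective} must hold of $a$ in $\algebra{A}$; since \eqref{tie-injective} is equational and $\theta_U$ is a homomorphism onto a subalgebra of a product of such function algebras, validity transfers back to $\algebra{A}$ as in the proofs of \Cref{axioms} and \Cref{injectiverep}.

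Second, and this is where the real work lies, I would prove that if $a$ satisfies \eqref{tie-injective} then $\theta_U(a)$ is injective, from which the general representation of \Cref{thm2} (a disjoint union) also represents $a$ injectively, since disjoint unions of injective functions are injective. Suppose $\theta_U(a)([\boldsymbol{b}]) = \theta_U(a)([\boldsymbol{c}])$, so there is $\alpha \in U$ with $\boldsymbol{\alpha}\compo(\boldsymbol{b}\compo a) = \boldsymbol{\alpha}\compo(\boldsymbol{c}\compo a)$, and with both $\boldsymbol{b}\compo a$ and $\boldsymbol{c}\compo a$ inequivalent to $0$, hence $\D(\boldsymbol{b}\compo a), \D(\boldsymbol{c}\compo a) \in U$. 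The goal is to deduce $[b_i] = [c_i]$ for every $i$. The strategy is to show, using the hypothesis \eqref{tie-injective} together with the Boolean-algebra structure on the $\A$-elements (\Cref{boolean}, now available since \Cref{quasi} has been derived via \Cref{lemma:quasi}), that $\D((\boldsymbol{b}\compo a)\bmeet(\boldsymbol{c}\compo a)) \bmeet \D(b_i \tie_i c_i)$ — an $\A$-element below both $\D((\boldsymbol{b}\compo a)\bmeet(\boldsymbol{c}\compo a))$ and $b_i\tie_i c_i$ — actually equals $\D((\boldsymbol{b}\compo a)\bmeet(\boldsymbol{c}\compo a))$ (because \eqref{tie-injective} says its antidomain-complement, i.e.\ $\A_i(b_i\tie_i c_i)$, meets it in $0$, so $\D((\boldsymbol{b}\compo a)\bmeet(\boldsymbol{c}\compo a)) \leq \D_i(b_i\tie_i c_i)$, which is the $\A$-element $b_i \tie_i c_i$). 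Then I would show $\D((\boldsymbol{b}\compo a)\bmeet(\boldsymbol{c}\compo a)) \in U$: from $\boldsymbol{\alpha}\compo(\boldsymbol{b}\compo a) = \boldsymbol{\alpha}\compo(\boldsymbol{c}\compo a)$ and distributivity \eqref{distributive}, $\boldsymbol{\alpha}\compo((\boldsymbol{b}\compo a)\bmeet(\boldsymbol{c}\compo a)) = \boldsymbol{\alpha}\compo(\boldsymbol{b}\compo a)$, and using that $\D(\boldsymbol{b}\compo a) \in U$ and the interaction of $\bmeet$ with $\D$ (via \eqref{commutative}) one gets $\D((\boldsymbol{b}\compo a)\bmeet(\boldsymbol{c}\compo a)) \in U$. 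Combining, $b_i \tie_i c_i \in U$, hence $\boldsymbol{b} \tie_i \boldsymbol{c}$-type reasoning: by \eqref{equaliser}, $\<b_i\tie_1^n c_i\>$-style composition equates $b_i$ and $c_i$ on an $\A$-element in $U$; more precisely, since $\prod_i (b_i\tie_i c_i)$ (the pointwise meet of these $\A$-elements, transported to the representative Boolean algebra) lies in $U$, its image witnesses $b_i \sim_U c_i$ for every $i$ via \eqref{equaliser}, giving $[b_i] = [c_i]$ as required.

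The main obstacle I anticipate is bookkeeping in the second direction: one must carefully move between the $\A_i$-elements for different $i$ using the isomorphisms $\theta_{ji}$ and the identification with the representative Boolean algebra of $\A$-elements, and confirm that \eqref{tie-injective} — stated with a \emph{single} index $i$ but a $\D_1^n$-tuple on the outside — really does yield, for the fixed tuples $\boldsymbol{b}, \boldsymbol{c}$, the inequality $\D((\boldsymbol{b}\compo a)\bmeet(\boldsymbol{c}\compo a)) \leq b_i\tie_i c_i$ in the correct Boolean algebra and that this can then be fed into \eqref{equaliser} to conclude $b_i \sim_U c_i$. The analogue of \Cref{lemma:max} / \Cref{lemma:quasi} reasoning — pre-composing with an appropriate $\A$-element in $U$ and exploiting that $a\tie b$ is the \emph{greatest} equaliser — is the template to follow, and I would mirror those arguments closely rather than reinventing them.
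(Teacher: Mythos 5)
Your proposal is correct and follows essentially the same route as the paper: the forward direction is the same pointwise argument that an injective function cannot have the left-hand side of \eqref{tie-injective} defined anywhere, and the reverse direction assembles the same elements of the ultrafilter $U$ (the domain of $(\boldsymbol{b}\compo a)\bmeet(\boldsymbol{c}\compo a)$ together with the greatest-equaliser fact) to force $b_i \tie_i c_i \in U$ and hence $[b_i]=[c_i]$ via \eqref{equaliser}. The only cosmetic difference is that you obtain $\D((\boldsymbol{b}\compo a)\bmeet(\boldsymbol{c}\compo a))\in U$ directly from distributivity and idempotency of $\bmeet$ and argue forwards, whereas the paper multiplies the tie with the two domains and derives a contradiction with $\A(b_i\tie c_i)\in U$; both are sound.
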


\begin{proof}
We first argue that any injective function $a$ satisfies \eqref{tie-injective}. Then if an element $a$ is represented as an injective function it must satisfy \eqref{tie-injective}. To this end, suppose $a$ is an injective $n$-ary partial function and that $\<\D_1^n((\boldsymbol{b} \compo a) \bmeet ( \boldsymbol{c} \bmeet a))\> \compo \A_i(b_i \tie_i c_i)$ is defined on the $n$-tuple $\boldsymbol{x}$. Then both $\boldsymbol{b} \compo a$ and $\boldsymbol{c} \compo a$ should be defined on $\boldsymbol{x}$ and take the same value. This means that $\<b_1(\boldsymbol{x}), \ldots, b_n(\boldsymbol{x})\>$ and $\<c_1(\boldsymbol{x}), \ldots, c_n(\boldsymbol{x})\>$ are both defined and $a(b_1(\boldsymbol{x}), \ldots, b_n(\boldsymbol{x})) = a(c_1(\boldsymbol{x}), \ldots, c_n(\boldsymbol{x}))$. By injectivity of $a$, we get $b_j(\boldsymbol{x}) = c_j(\boldsymbol{x})$ for every $j$. In particular $b_i(\boldsymbol{x}) = c_i(\boldsymbol{x})$ and so $b_i \tie_i c_i$  is defined on $\boldsymbol{x}$. Hence $\A_i(b_i \tie_i c_i)$ is \emph{not} defined on $\boldsymbol{x}$. This contradicts  $\<\D_1^n((\boldsymbol{b} \compo a) \bmeet ( \boldsymbol{c} \bmeet a))\> \compo \A_i(b_i \tie_i c_i)$ being defined on $\boldsymbol{x}$ and so  $\<\D_1^n((\boldsymbol{b} \compo a) \bmeet ( \boldsymbol{c} \bmeet a))\> \compo \A_i(b_i \tie_i c_i)$ must be the empty function.

We now prove the converse: that every $a$ satisfying \eqref{tie-injective} is represented by our representation as an injective function. We will argue that, for any ultrafilter $U$ of $\A$-elements, the map $\theta_U$ described in \Cref{representation} maps elements satisfying \eqref{tie-injective} to injective functions. Since a disjoint union of injective functions is injective, the result follows.

Suppose $a$ satisfies \eqref{tie-injective} and suppose for a contradiction that $\theta_U(a)([\mathbf{b}]) = \theta_U(a)([\mathbf{c}])$ (with both sides defined) and that $[\boldsymbol{b}] \neq [\boldsymbol{c}]$. The second of these statements means that $U$ contains some equaliser of $\boldsymbol{b} \compo a$ and $\boldsymbol{c} \compo a$, so $(\boldsymbol{b} \compo a) \tie (\boldsymbol{c} \compo a) \in U$, as this is the greatest such equaliser. Since both $\boldsymbol{b} \compo a$ and $\boldsymbol{c} \compo a$ are inequivalent to $0$ we know that $\D(\boldsymbol{b} \compo a) \in U$ and  $\D(\boldsymbol{c} \compo a) \in U$. Since $[\boldsymbol{b}] \neq [\boldsymbol{c}]$, we have $[b_i] \neq [c_i]$ for some $i$. Then $b_i \tie c_i \not\in U$, so that $\A(b_i \tie c_i) \in U$. Marshalling all our elements of $U$ we have
\[
((\boldsymbol{b} \compo a) \tie (\boldsymbol{c} \compo a)) \D(\boldsymbol{b} \compo a) \D(\boldsymbol{c} \compo a) \A(b_i \tie c_i) =  \D((\boldsymbol{b} \compo a) \bmeet (\boldsymbol{c} \compo a)) \A(b_i \tie c_i) \in U
\]
where we now use juxtaposition for the Boolean meet. We are told by \eqref{tie-injective} that this element of the ultrafilter $U$ is $0$---a contradiction. We conclude that $\theta_U(a)$ is injective.
\end{proof}

\begin{corollary}
The class of $(\<\phantom{a}\>\compo, \A_1, \ldots, \A_n, \bmeet)$-algebras that are representable by injective $n$-ary partial functions is a variety, finitely axiomatised by the equations specified in \Cref{thm2} together with \eqref{tie-injective}.
\end{corollary}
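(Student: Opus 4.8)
The plan is to read the statement off from \Cref{thm2} and \Cref{prop:tie}. Since the proposed axiomatisation consists of finitely many equations, the resulting class is automatically a variety (by Birkhoff's theorem), so it suffices to establish soundness and completeness: that an algebra is representable by injective $n$-ary partial functions if and only if it satisfies the equations of \Cref{thm2} together with \eqref{tie-injective}.

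For soundness, I would let $\algebra{A}$ be representable by injective $n$-ary partial functions, via an isomorphism onto an algebra $\algebra{F}$ all of whose elements are injective. The equations of \Cref{thm2} are valid in every algebra of $n$-ary partial functions, hence in $\algebra{F}$, and hence (being preserved by isomorphism) in $\algebra{A}$. The first half of the proof of \Cref{prop:tie} already showed that every injective $n$-ary partial function satisfies the indexed equations \eqref{tie-injective}; therefore every element of $\algebra{F}$ satisfies them, and so does every element of $\algebra{A}$.

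For completeness, suppose $\algebra{A}$ validates all the specified equations. Since it validates the equations of \Cref{thm2}, the representation $\theta$ constructed in the proof of that theorem---the disjoint union, as in \Cref{thm}, of the maps $\theta_U$ of \Cref{representation} over a suitable family of ultrafilters---is a representation of $\algebra{A}$ by $n$-ary partial functions. Since $\algebra{A}$ also validates the identities \eqref{tie-injective}, every one of its elements satisfies them, so by the second half of the proof of \Cref{prop:tie}---which shows that each $\theta_U$, and hence their disjoint union $\theta$, sends any element satisfying \eqref{tie-injective} to an injective function---the image of $\theta$ consists entirely of injective functions. Thus $\theta$ is a representation of $\algebra{A}$ by injective $n$-ary partial functions, as required.

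Essentially all the work has already been done in \Cref{thm2} and \Cref{prop:tie}, so I do not expect a genuine obstacle here. The only point that needs a little care is that \Cref{prop:tie} must be invoked for all elements of $\algebra{A}$ simultaneously, using the single fixed representation $\theta$ coming from \Cref{thm2}; this is legitimate precisely because \eqref{tie-injective} holds as an identity in $\algebra{A}$ and therefore is satisfied by every element at once.
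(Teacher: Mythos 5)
Your proposal is correct and follows exactly the route the paper intends: the corollary is an immediate consequence of \Cref{thm2} together with \Cref{prop:tie}, with soundness coming from the first half of the proof of \Cref{prop:tie} and completeness from the second half applied to the representation of \Cref{thm2}. The paper itself leaves the corollary without a separate proof for precisely this reason, so there is nothing to add.
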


\begin{corollary}
The finite representation property holds for the signature $(\<\phantom{a}\>\compo, \A_1,\linebreak \ldots, \A_n, \bmeet)$ for representation by $n$-ary partial functions and for representation by injective $n$-ary partial functions.
\end{corollary}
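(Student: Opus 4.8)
The plan is to observe that the representations produced in \Cref{thm2} and \Cref{prop:tie} are nothing other than the representation $\theta$ built in the proof of \Cref{thm}, for which it was already noted that the base is finite whenever the algebra is finite. Let $\algebra{A}$ be a finite algebra of the signature $(\<\phantom{a}\>\compo, \A_1, \ldots, \A_n, \bmeet)$ that is representable by $n$-ary partial functions (respectively, by injective $n$-ary partial functions). By \Cref{thm2} (respectively, by \Cref{prop:tie} applied on top of \Cref{thm2}), $\algebra{A}$ validates the relevant finite equational axiomatisation, so the construction in the proof of \Cref{thm} applies: one takes a disjoint union of the homomorphisms $\theta_U$ of \Cref{representation}, indexed by the pairs $(a,b)$ with $a \nleq b$, and this disjoint union is a representation of $\algebra{A}$. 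The proofs of \Cref{thm2} and \Cref{prop:tie} show that this same $\theta$ additionally represents $\bmeet$ correctly and, in the injective case, sends the specified elements to injective functions, so $\theta$ is a representation of the required kind.

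It then remains only to bound the base. Each $\theta_U$ has base a set of $\sim_U$-equivalence classes of elements of $\algebra{A}$, hence of cardinality at most $|\algebra{A}|$; and the disjoint union is indexed by at most $|\algebra{A}|^2$ ultrafilters, one for each pair $(a,b)$ with $a \nleq b$. Hence the base of $\theta$ has size at most $|\algebra{A}|^3$, which is finite. This exhibits, for the signature $(\<\phantom{a}\>\compo, \A_1, \ldots, \A_n, \bmeet)$, a representation of $\algebra{A}$ of each of the two kinds on a finite base, which is exactly the finite representation property.

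I expect essentially no obstacle: every component has already been assembled in the preceding sections, and the content of the argument is just the bookkeeping observation that one and the same representation serves all of these signatures and that it is finite on finite input. The only point worth a sentence of care is that forming a disjoint union preserves the correctness of the $\bmeet$ clause and the injectivity of the summands; the former holds because on the pairwise disjoint bases intersection is computed tuple-by-tuple and so matches the componentwise union of the representing functions, and the latter because a union of injective functions whose bases are pairwise disjoint is again injective.
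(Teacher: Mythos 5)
Your proposal is correct and is essentially the argument the paper intends: the corollary follows because the representation of \Cref{thm2} (and of \Cref{prop:tie}) is the same disjoint union of the maps $\theta_U$ used in \Cref{thm}, whose base was already observed to have size at most the cube of $|\algebra{A}|$. The two points you flag for care (intersection and injectivity surviving the disjoint union) are exactly the ones the paper relies on, and your justifications of them are right.
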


\section{Preferential Union}\label{section:preferential}

For signatures including composition and the antidomain operations, there is a simple equational characterisation of preferential union in terms of composition and the antidomain operations.

\begin{proposition}\label{prop:pref}
In an algebra of $n$-ary partial functions, for signatures containing composition and the antidomain operations, $h$ is the preferential union of $f$ and $g$ if and only if $\<\D_1^n\!f\> \compo h = f$ and $\<\A_1^n\!f\> \compo h = \<\A_1^n\!f\> \compo g$.
\end{proposition}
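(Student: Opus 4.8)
The plan is to translate the two algebraic equations into concrete set-theoretic statements about restrictions to the domain and antidomain of $f$, and then to compare these with the defining clauses of $\pref$ point by point. First I would observe that, for any $n$-ary partial function $h$ in the algebra, the composite $\<\D_1^n\!f\> \compo h$ is precisely $h$ restricted to the domain of $f$: since each $\D_i\!f$ is a restriction of the $i$th projection, the only $n$-tuple $\vec y$ that can witness the existential in the definition of composition is $\vec x$ itself, and $(\vec x, x_i) \in \D_i\!f$ for every $i$ exactly when $f(\vec x)$ is defined (which also forces the components of $\vec x$ to be $E$-equivalent, but that is automatic for any tuple occurring in a member of the algebra). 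In the same way $\<\A_1^n\!f\> \compo h$ equals $h$ restricted to the antidomain of $f$, that is, to those $E$-equivalent $n$-tuples at which $f$ is undefined. Thus the first equation asserts that the domain of $f$ is contained in that of $h$ and that $h$ and $f$ agree there, while the second asserts that $h$ and $g$ agree on the antidomain of $f$.

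Next I would dispatch the forward implication. Assuming $h = f \pref g$, the first two clauses in the definition of $\pref$ immediately give that $h$ agrees with $f$ on the domain of $f$ (so $\<\D_1^n\!f\> \compo h = f$) and agrees with $g$ at every $E$-equivalent tuple where $f$ is undefined (so $\<\A_1^n\!f\> \compo h = \<\A_1^n\!f\> \compo g$); here I use that members of the algebra contain only tuples of $E$-equivalent points, so that no further restriction of $g$ to the $E$-equivalence classes is needed for the right-hand side to match.

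For the converse I would take the two equations as hypotheses and prove $h = f \pref g$ by double inclusion, in each direction splitting on whether $f(\vec x)$ is defined. If $(\vec x, z) \in h$ and $f(\vec x)$ is defined, the first equation forces $z = f(\vec x) = (f \pref g)(\vec x)$; if $f(\vec x)$ is undefined, then $\vec x$ lies in the antidomain of $f$, so the second equation forces $z = g(\vec x) = (f \pref g)(\vec x)$. Running the same case analysis starting from $(\vec x, z) \in f \pref g$ yields the reverse inclusion, using in the second case that the first equation also makes the domain of $f$ a subset of the domain of $h$. I do not expect a real obstacle here; the only thing to be careful about is the bookkeeping with the equivalence relation $E$, namely to keep noting that every tuple appearing in an element of the algebra already has $E$-equivalent components, so the ``$E$-equivalent'' side conditions in the definitions of $\D_i$, $\A_i$ and $\pi_i$ never produce a mismatch between the two sides of an equation.
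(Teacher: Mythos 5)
Your proof is correct and follows essentially the same route as the paper's: the forward direction reads off the clauses of $\pref$, and the converse is a double inclusion with a case split on whether $f$ is defined at the given tuple, taking the same care that tuples occurring in algebra elements already have $E$-equivalent components. Your preliminary observation that $\<\D_1^n\!f\> \compo h$ and $\<\A_1^n\!f\> \compo h$ are the restrictions of $h$ to the domain and antidomain of $f$ just packages the pointwise reasoning the paper carries out inline.
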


\begin{proof}
First suppose that $h = f \pref g$. If $\<\D_1^n\!f\> \compo h$ is defined on an $n$-tuple $\boldsymbol{x}$ then $f$ is defined on $\boldsymbol{x}$ and so $h$ is defined on $\boldsymbol{x}$ with the same value as $f$. Hence $(\<\D_1^n\!f\> \compo h)(\boldsymbol{x}) = f(\boldsymbol{x})$. Conversely, if $f$ is defined on $\boldsymbol{x}$ then $h$ is too, with the same value. Then $\<\D_1^n\!f\> \compo h$ is defined on $\boldsymbol{x}$ and $(\<\D_1^n\!f\> \compo h)(\boldsymbol{x}) = f(\boldsymbol{x})$. This completes the argument that $\<\D_1^n\!f\> \compo h = f$.

Continuing to suppose that $h = f \pref g$, if $\<\A_1^n\!f\> \compo h$ is defined on an $n$-tuple $\boldsymbol{x}$ then $f$ is not defined on $\boldsymbol{x}$ and $h$ \emph{is} defined on $\boldsymbol{x}$. As $h$ is the preferential join of $f$ and $g$, this implies that $g$ is defined on $\boldsymbol{x}$ with the same value as $h$. So $\<\A_1^n\!f\> \compo h$ agrees with $\<\A_1^n\!f\> \compo g$ on $\boldsymbol{x}$. Conversely, if $\<\A_1^n\!f\> \compo g$ is defined on $\boldsymbol{x}$ then $f$ is not defined on $\boldsymbol{x}$ and $g$ is. This implies that $h$ is defined on $\boldsymbol{x}$ with the same value as $g$. So  again $\<\A_1^n\!f\> \compo h$ agrees with $\<\A_1^n\!f\> \compo g$ on $\boldsymbol{x}$. This completes the argument that $\<\A_1^n\!f\> \compo h = \<\A_1^n\!f\> \compo g$.

We now show that for any $f$, $g$ and $h$ satisfying the two equations, $h$ is the preferential join of $f$ and $g$. Given such an $f$, $g$ and $h$, first suppose that $h$ is defined on the $n$-tuple $\boldsymbol{x}$. If $f$ is also defined on $\boldsymbol{x}$ then $\<\D_1^n\!f\> \compo h$ is defined on $\boldsymbol{x}$ with the same value as $h$. In this case we are told by the equation $\<\D_1^n\!f\> \compo h = f$ that $h(\boldsymbol{x}) = (\<\D_1^n\!f\> \compo h)(\boldsymbol{x}) = f(\boldsymbol{x}) =  (f \pref g)(\boldsymbol{x})$. If $f$ is \emph{un}defined at $\boldsymbol{x}$ then $\<\A_1^n\!f\> \compo h$ is defined on $\boldsymbol{x}$ with the same value as $h$. Then the equation $\<\A_1^n\!f\> \compo h = \<\A_1^n\!f\> \compo g$ tells us that $h(\boldsymbol{x}) = (\<\A_1^n\!f\> \compo h)(\boldsymbol{x}) = (\<\A_1^n\!f\> \compo g)(\boldsymbol{x})$. So $g$ must be defined at $\boldsymbol{x}$ with the same value as $h$. But $g(\boldsymbol{x}) = (f \pref g)(\boldsymbol{x})$, as $f$ is undefined here. Again we have found $h(\boldsymbol{x}) = (f \pref g)(\boldsymbol{x})$. We conclude that $h \subseteq f \pref g$.

Conversely, suppose that $f \pref g$ is defined on $\boldsymbol{x}$. If $f$ is defined on $\boldsymbol{x}$ then $(f \pref g)(\boldsymbol{x}) = f(\boldsymbol{x}) = (\<\D_1^n\!f\> \compo h)(\boldsymbol{x}) = h(\boldsymbol{x})$, utilising the equation $\<\D_1^n\!f\> \compo h = f$. If $f$ is \emph{not} defined on $\boldsymbol{x}$ then $g$ must be, since $f \pref g$ is defined, and for the same reason $\A_1\!f, \ldots, \A_n\!f$ must be defined on $\boldsymbol{x}$. Then $(f \pref g)(\boldsymbol{x}) = g(\boldsymbol{x}) = (\<\A_1^n\!f\> \compo g)(\boldsymbol{x}) = (\<\A_1^n\!f\> \compo h)(\boldsymbol{x}) = h(\boldsymbol{x})$, utilising the equation $\<\A_1^n\!f\> \compo h = \<\A_1^n\!f\> \compo g$. We conclude that $h \supseteq f \pref g$, completing the proof that $h= f \pref g$.
\end{proof}

The content of \Cref{prop:pref} means we only need add the following two equations in order to extend the axiomatisations of the previous sections so as to include $\pref$ in the signature.
\begin{align}\label{18}
\<\D_1^n\!a\> \compo (a \pref b) &= a\\
\label{19}
\<\A_1^n\!a\> \compo (a \pref b) &= \<\A_1^n\!a\> \compo b
\end{align}
For the signature $(\<\phantom{a}\>\compo, \A_1, \ldots, \A_n, \pref)$ this gives us quasiequational axiomatisations. However it is possible to replace the quasiequation \eqref{quasi} with a valid equation that trivially implies it.

\begin{proposition}
For any signature containing composition, the antidomain operations and preferential union, the following equation is valid for the class of algebras representable by $n$-ary partial functions.
\begin{equation}\label{bonus}
(\<\D_1^n\!a\> \compo b) \pref (\<\A_1^n\!a\> \compo b) = b
\end{equation}
\end{proposition}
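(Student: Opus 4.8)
The plan is to prove \eqref{bonus} by the same reduction that was used for \Cref{axioms}. Since equational validity is inherited by subalgebras and products, and every algebra representable by $n$-ary partial functions embeds into a product of algebras having square representations, it suffices to verify \eqref{bonus} in an arbitrary square algebra of $n$-ary partial functions. So first I would fix such an algebra with base $X$ together with elements $a$ and $b$, and reduce the claim to a pointwise statement about $n$-tuples $\vec x \in X^n$.

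The key observation I would record is that, because each $\D_i(a)$ and each $\A_i(a)$ is a restriction of the $i$th projection, in the composition defining $\<\D_1^n\!a\> \compo b$ the intermediate $n$-tuple is forced to be $\vec x$ itself; hence $\<\D_1^n\!a\> \compo b$ is defined at $\vec x$ precisely when $a(\vec x)$ and $b(\vec x)$ are both defined, with value $b(\vec x)$, and similarly $\<\A_1^n\!a\> \compo b$ is defined at $\vec x$ precisely when $a(\vec x)$ is undefined and $b(\vec x)$ is defined, again with value $b(\vec x)$. In other words $\<\D_1^n\!a\> \compo b$ is $b$ restricted to the domain of $a$ and $\<\A_1^n\!a\> \compo b$ is $b$ restricted to the complement of the domain of $a$; these two partial functions have disjoint domains whose union is the domain of $b$, and each agrees with $b$ where defined.

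The last step is simply to note that the preferential union of two partial functions with disjoint domains coincides with their set-theoretic union, so that $(\<\D_1^n\!a\> \compo b) \pref (\<\A_1^n\!a\> \compo b) = (\<\D_1^n\!a\> \compo b) \cup (\<\A_1^n\!a\> \compo b) = b$, which is \eqref{bonus}. I would finish with the remark (already flagged in the surrounding text) that \eqref{bonus} trivially implies \eqref{quasi}: if $\<\D_1^n\!a\> \compo b = \<\D_1^n\!a\> \compo c$ and $\<\A_1^n\!a\> \compo b = \<\A_1^n\!a\> \compo c$, then applying \eqref{bonus} to each side yields $b = c$. The argument is essentially routine; the only point requiring a little care is the justification that the intermediate tuple in the composition is forced to equal $\vec x$, so that one genuinely obtains $b$ restricted to the domain, respectively antidomain, of $a$ with no spurious or missing tuples — there is no substantial obstacle.
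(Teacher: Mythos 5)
Your proposal is correct and follows essentially the same route as the paper: reduce to an arbitrary square algebra of $n$-ary partial functions and verify the identity pointwise, using the fact that each $\D_i\!a$ and $\A_i\!a$ is a restriction of the $i$th projection so the intermediate tuple in the composition is forced to be $\vec{x}$. The paper organizes the pointwise check as a case analysis on whether $a$ and then $b$ is defined at $\vec{x}$, whereas you package it as the observation that the two composites are $b$ restricted to the domain, respectively antidomain, of $a$ and that the preferential union of functions with disjoint domains is their union; the content is identical.
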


\begin{proof}
As usual, we prove validity for an arbitrary square algebra of $n$-ary partial functions. So let $a$ and $b$ be elements of such an algebra, with base $X$, and let $\boldsymbol{x}$ be an $n$-tuple in $X^n$.

If $a$ is defined on $\boldsymbol{x}$ then $\D_1\!a, \ldots, \D_n\!a$ are too. Then $(\<\D_1^n\!a\> \compo b) \pref (\<\A_1^n\!a\> \compo b)$ and $b$ agree on $\boldsymbol{x}$, since if $b$ is defined on $\boldsymbol{x}$ then $\<\D_1^n\!a\> \compo b$ is and so $((\<\D_1^n\!a\> \compo b) \pref (\<\A_1^n\!a\> \compo b))(\boldsymbol{x}) = (\<\D_1^n\!a\> \compo b)(\boldsymbol{x}) = b(\boldsymbol{x})$ and if $b$ is \emph{not} defined on $\boldsymbol{x}$ then neither $\<\D_1^n\!a\> \compo b$ nor $\<\A_1^n\!a\> \compo b$ are and so $(\<\D_1^n\!a\> \compo b) \pref (\<\A_1^n\!a\> \compo b)$ is also not defined on $\boldsymbol{x}$.

The other case needing consideration is when $a$ is \emph{not} defined on $\boldsymbol{x}$. Then $\<\D_1^n\!a\> \compo b$ is not defined on $\boldsymbol{x}$ and $\A_1\!a, \ldots, \A_n\!a$ are all defined on $\boldsymbol{x}$. Again $(\<\D_1^n\!a\> \compo b) \pref (\<\A_1^n\!a\> \compo b)$ and $b$ agree on $\boldsymbol{x}$, since if $b$ is defined then $((\<\D_1^n\!a\> \compo b) \pref (\<\A_1^n\!a\> \compo b))(\boldsymbol{x}) = (\<\A_1^n\!a\> \compo b)(\boldsymbol{x}) = b(\boldsymbol{x})$ and if $b$ is not defined on $\boldsymbol{x}$ then neither $\<\D_1^n\!a\> \compo b$ nor $\<\A_1^n\!a\> \compo b$ are and so $(\<\D_1^n\!a\> \compo b) \pref (\<\A_1^n\!a\> \compo b)$ also is not.
\end{proof}

We obtain the following results.

\begin{theorem}
The class of $(\<\phantom{a}\>\compo, \A_1, \ldots, \A_n, \pref)$-algebras that are representable by $n$-ary partial functions is a variety, finitely axiomatised by equations \eqref{super}--\,\eqref{anti-twisted}, \eqref{newdomain} and \eqref{hide} together with \eqref{18}, \eqref{19} and \eqref{bonus}.
\end{theorem}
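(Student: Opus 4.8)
The plan is to reuse the representation of \Cref{thm} essentially verbatim, and then check that the two new equations \eqref{18} and \eqref{19} guarantee that the operation $\pref$ is represented correctly by each $\theta_U$; the master equation \eqref{bonus} will then be used only once, at the outset, to recover \eqref{quasi}, so that all the machinery of Sections~\ref{section2} and~\ref{section:representation} becomes available.

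First I would observe that \eqref{bonus} trivially implies \eqref{quasi}: if $\<\D_1^n\!a\> \compo b = \<\D_1^n\!a\> \compo c$ and $\<\A_1^n\!a\> \compo b = \<\A_1^n\!a\> \compo c$, then applying \eqref{bonus} to $b$ and to $c$ gives $b = (\<\D_1^n\!a\> \compo b) \pref (\<\A_1^n\!a\> \compo b) = (\<\D_1^n\!a\> \compo c) \pref (\<\A_1^n\!a\> \compo c) = c$. Hence an algebra validating \eqref{super}--\eqref{anti-twisted}, \eqref{newdomain}, \eqref{hide}, \eqref{18}, \eqref{19} and \eqref{bonus} validates all of \eqref{super}--\eqref{hide}, so \Cref{congruence}, \Cref{representation}, \Cref{proper} and \Cref{thm} all apply to its $(\<\phantom{a}\>\compo, \A_1, \ldots, \A_n)$-reduct. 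In particular, for each ultrafilter $U$ of $\A$-elements the map $\theta_U$ is a homomorphism of $(\<\phantom{a}\>\compo, \A_1, \ldots, \A_n)$-algebras, and by taking a disjoint union over the ultrafilters $U_{ab}$ supplied by \Cref{proper} we get a faithful representation $\theta$ of the reduct. It remains only to verify that each $\theta_U$ also respects $\pref$; disjoint unions of such maps will then automatically respect $\pref$ too, since $\pref$ is computed componentwise on disjoint unions just as composition is.

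The core step is therefore: for any ultrafilter $U$ of $\A$-elements and any $a, b \in \algebra{A}$, show $\theta_U(a \pref b) = \theta_U(a) \pref \theta_U(b)$. By \Cref{prop:pref} it suffices to show that the $n$-ary partial function $\theta_U(a \pref b)$ satisfies the two defining equations relative to $\theta_U(a)$ and $\theta_U(b)$, i.e.\ $\<\D_1^n(\theta_U(a))\> \compo \theta_U(a \pref b) = \theta_U(a)$ and $\<\A_1^n(\theta_U(a))\> \compo \theta_U(a \pref b) = \<\A_1^n(\theta_U(a))\> \compo \theta_U(b)$. But $\theta_U$ is already known to be a homomorphism for composition and the antidomain operations, and hence for $\D_i$; so these two equations are precisely the images under $\theta_U$ of the algebra equations \eqref{18}, $\<\D_1^n\!a\> \compo (a \pref b) = a$, and \eqref{19}, $\<\A_1^n\!a\> \compo (a \pref b) = \<\A_1^n\!a\> \compo b$, which hold by hypothesis. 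Finally, since $\theta_U(a \pref b)$ and $\theta_U(a) \pref \theta_U(b)$ are two $n$-ary partial functions satisfying the same pair of equations with respect to the fixed functions $\theta_U(a)$, $\theta_U(b)$, the uniqueness clause of \Cref{prop:pref} (the ``only if'' direction, which shows any such $h$ equals $\theta_U(a) \pref \theta_U(b)$) forces them to be equal. Validity of the axioms in the representation class was established for \eqref{super}--\eqref{anti-twisted}, \eqref{newdomain}, \eqref{hide} in \Cref{axioms}, \Cref{prop:hide}, for \eqref{18}, \eqref{19} in \Cref{prop:pref}, and for \eqref{bonus} just above, completing the proof that the class is exactly the variety axiomatised by these equations.

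I do not anticipate a genuine obstacle here: the only mild subtlety is making sure that the disjoint-union construction of \Cref{def:disjoint} really does compute $\pref$ componentwise, so that ``each $\theta_{U_{ab}}$ represents $\pref$ correctly'' upgrades to ``$\theta$ represents $\pref$ correctly''. This follows because $\pref$, like the other operations, is defined pointwise on the base, and the bases $X_i$ in a disjoint union are pairwise disjoint with $E = \bigcup_i E_i$, so for an $n$-tuple drawn from a single $X_i$ the value of $f \pref g$ depends only on the restrictions of $f$ and $g$ to that block; this is exactly the same observation already used implicitly for composition in the proof of \Cref{thm}. Everything else is bookkeeping, and the finite representation property follows as before from the fact that the base of the representation of a finite algebra is bounded by the cube of the algebra's cardinality.
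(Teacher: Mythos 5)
Your proposal is correct and follows essentially the same route the paper intends: use \eqref{bonus} to recover \eqref{quasi} so that the representation of \Cref{thm} applies to the reduct, then invoke the converse direction of \Cref{prop:pref} together with the homomorphism property of $\theta_U$ for composition and antidomain to see that \eqref{18} and \eqref{19} force $\pref$ to be represented correctly. (Your only slip is labelling the relevant half of \Cref{prop:pref} the ``only if'' direction when it is the ``if'' direction; the substance is right.)
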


\begin{theorem}
The class of $(\<\phantom{a}\>\compo, \A_1, \ldots, \A_n, \pref)$-algebras that are representable by injective $n$-ary partial functions is a quasivariety, finitely axiomatised by \eqref{super}--\,\eqref{hide} together with \eqref{injective}, \eqref{18} and \eqref{19}.
\end{theorem}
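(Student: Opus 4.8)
The plan is to mirror the non-injective argument, combining the representation theorem for the signature $(\compo, \A_1, \ldots, \A_n)$ with the equational description of preferential union supplied by \Cref{prop:pref}, while carrying the injectivity axioms \eqref{injective} along.

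For soundness, \eqref{super}--\eqref{hide} are valid for every algebra of $n$-ary partial functions (as established in \Cref{section2}), hence a fortiori for every algebra of injective ones; the quasiequations \eqref{injective} are valid for every algebra of injective $n$-ary partial functions by the remark following the proof of \Cref{injectiverep}; and \eqref{18} and \eqref{19} are valid for every algebra of $n$-ary partial functions by the forward direction of \Cref{prop:pref}, since in such an algebra $a \pref b$ genuinely is the preferential union of $a$ and $b$. As these are all (quasi)equations, they hold in every representable algebra.

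For completeness, let $\algebra{A}$ be a $(\compo, \A_1, \ldots, \A_n, \pref)$-algebra satisfying \eqref{super}--\eqref{hide}, \eqref{injective}, \eqref{18} and \eqref{19}. Its $(\compo, \A_1, \ldots, \A_n)$-reduct validates \eqref{super}--\eqref{hide} and \eqref{injective}, so by \Cref{cor:1} it has a representation $\theta$ by injective $n$-ary partial functions. It remains only to check that this same $\theta$ also represents $\pref$ correctly; as in the proof of \Cref{representation}, correctness of $\theta$ on all the operations automatically makes the codomain an algebra of $n$-ary partial functions of the full signature and $\theta$ an isomorphism onto it. Fix $a, b \in \algebra{A}$ and set $c := a \pref b$. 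Then \eqref{18} and \eqref{19} say that $\<\D_1^n\!a\> \compo c = a$ and $\<\A_1^n\!a\> \compo c = \<\A_1^n\!a\> \compo b$ hold in $\algebra{A}$. Since $\theta$ is a homomorphism for composition and the antidomain operations, and hence for the derived operations $\D_i$, applying $\theta$ yields $\<\D_1^n\,\theta(a)\> \compo \theta(c) = \theta(a)$ and $\<\A_1^n\,\theta(a)\> \compo \theta(c) = \<\A_1^n\,\theta(a)\> \compo \theta(b)$ among the partial functions in the codomain. By the converse direction of \Cref{prop:pref}, applied with $f = \theta(a)$ and $g = \theta(b)$, these identities force $\theta(c) = \theta(a) \pref \theta(b)$; that is, $\theta(a \pref b) = \theta(a) \pref \theta(b)$. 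Hence $\theta$ is a representation of $\algebra{A}$ by injective $n$-ary partial functions, so $\algebra{A}$ is representable. Together with soundness this shows the listed (quasi)equations axiomatise the representation class, which, being cut out by (quasi)equations, is a quasivariety.

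I do not anticipate a genuine obstacle. The one point needing care is that a representation of the $(\compo, \A_1, \ldots, \A_n)$-reduct is not a priori a representation of the full-signature algebra; this is exactly what \Cref{prop:pref} resolves, reducing the correctness of $\theta$ on $\pref$ to two equations in operations $\theta$ already represents faithfully. As in earlier sections, if $\algebra{A}$ is finite then the representation supplied by \Cref{cor:1} has a finite base, so the finite representation property for this signature with injective $n$-ary partial functions drops out as a corollary.
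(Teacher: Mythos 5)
Your proposal is correct and follows exactly the route the paper intends: the paper states this theorem without a separate proof because the surrounding text reduces it to combining \Cref{cor:1} with the two equations \eqref{18} and \eqref{19}, whose sufficiency for representing $\pref$ correctly is precisely the content of \Cref{prop:pref}. Your handling of the one delicate point---that a representation of the reduct must be checked to represent $\pref$, which the converse direction of \Cref{prop:pref} supplies since $\theta(a\pref b)$, $\theta(a)$ and $\theta(b)$ are concrete partial functions satisfying the two characterising equations---matches the paper's argument.
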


\begin{corollary}
The finite representation property holds for the signature $(\<\phantom{a}\>\compo, \A_1,\linebreak \ldots, \A_n, \pref)$ for representation by $n$-ary partial functions and for representation by injective $n$-ary partial functions.
\end{corollary}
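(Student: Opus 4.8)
The plan is to appeal, as was noted at the opening of \Cref{section:representation}, to the fact that every representation theorem in the paper — the two just stated included — is proved by means of one and the same representation, namely the homomorphism $\theta$ constructed in the proof of \Cref{thm}. Recall that $\theta$ is obtained from an algebra $\algebra{A}$ satisfying the relevant axioms by choosing, for each pair $a, b \in \algebra{A}$ with $a \nleq b$, an ultrafilter $U_{ab}$ of $\A$-elements separating $a$ from both $0$ and $b$ (as provided by \Cref{proper}), forming the homomorphism $\theta_{U_{ab}}$ of \Cref{representation}, and taking a disjoint union of the resulting family. Adding $\pref$ to the signature leaves this construction untouched: by \Cref{prop:pref} together with \eqref{18} and \eqref{19}, each $\theta_U$ continues to represent $\pref$ correctly, and a disjoint union of homomorphisms of the larger signature is again a homomorphism (\Cref{def:disjoint}); in the injective case the identical $\theta$ is used, since a disjoint union of injective functions is injective.

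It then remains only to bound the size of the base of $\theta$, which was in fact already observed after \Cref{thm}. If $\abs{\algebra{A}} = m$, then the index set of the disjoint union — the set of pairs $(a, b)$ with $a \nleq b$ — has size at most $m^2$, while each factor $\theta_{U_{ab}}$ has base $\{[c] \mid c \in \algebra{A}\} \setminus \{[0]\}$, of size at most $m$. Hence the base $X$ of $\theta$ satisfies $\abs{X} \le m^3$. In particular, when $\algebra{A}$ is finite this base is finite, so any finite algebra of either signature in question that validates the stated axioms — equivalently, by the two preceding theorems, any finite representable such algebra — is representable on a finite base.

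There is no real obstacle here: the corollary is a bookkeeping consequence of the fact, already established in the two preceding theorems, that finite algebras satisfying the stated axioms are representable via a construction that visibly produces a base of size at most the cube of the size of the algebra.
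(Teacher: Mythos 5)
Your proposal is correct and matches the paper's (implicit) argument exactly: the corollary follows because the expanded signatures reuse the representation $\theta$ of \Cref{thm}, whose base was already observed to have size at most the cube of $\abs{\algebra{A}}$. Nothing further is needed.
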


For the signature $(\<\phantom{a}\>\compo, \A_1, \ldots, \A_n, \bmeet, \pref)$ we can simply extend the equational axiomatisations of \Cref{section:intersection}.

\begin{theorem}
The class of $(\<\phantom{a}\>\compo, \A_1, \ldots, \A_n, \bmeet, \pref)$-algebras that are representable by $n$-ary partial functions is a variety, finitely axiomatised by the equations specified in \Cref{thm2} together with \eqref{18} and \eqref{19}.
\end{theorem}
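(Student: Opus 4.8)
The plan is to reuse, without change, the representation $\theta$ constructed in the proof of \Cref{thm2} and to verify only that it also represents $\pref$ correctly. \emph{Soundness} of the two new axioms is immediate: \eqref{18} and \eqref{19} are precisely the two equations appearing in \Cref{prop:pref} with $h \coloneqq a \pref b$, so they hold in every algebra of $n$-ary partial functions, while the remaining axioms are valid by \Cref{thm2}.

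For completeness, let $\algebra{A}$ be an algebra of the signature $(\<\phantom{a}\>\compo, \A_1, \ldots, \A_n, \bmeet, \pref)$ validating all the specified equations. Its $(\<\phantom{a}\>\compo, \A_1, \ldots, \A_n, \bmeet)$-reduct then validates the equations of \Cref{thm2}, hence, via \Cref{lemma:quasi}, also \eqref{quasi} and every consequence derived in Sections~\ref{section2} and~\ref{section:intersection}. In particular, for any ultrafilter $U$ of $\A$-elements, the map $\theta_U$ of \Cref{representation} is a homomorphism of $(\<\phantom{a}\>\compo, \A_1, \ldots, \A_n, \bmeet)$-algebras, and the disjoint-union construction of \Cref{thm} yields a faithful representation $\theta$ of the reduct. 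Since a disjoint union of homomorphisms is again a homomorphism (\Cref{def:disjoint}) and $\theta$ is already injective, it remains only to check that each $\theta_U$ respects $\pref$; the passage to $\theta$ and its faithfulness are then unchanged.

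So fix $U$ and $a, b \in \algebra{A}$. As $\D_i = \A_i^2$ is a derived term and $\theta_U$ represents composition and the antidomain operations correctly, we compute $\<\D_1^n\!\theta_U(a)\> \compo \theta_U(a \pref b) = \theta_U(\<\D_1^n\!a\> \compo (a \pref b)) = \theta_U(a)$ using \eqref{18}, and $\<\A_1^n\!\theta_U(a)\> \compo \theta_U(a \pref b) = \theta_U(\<\A_1^n\!a\> \compo (a \pref b)) = \theta_U(\<\A_1^n\!a\> \compo b) = \<\A_1^n\!\theta_U(a)\> \compo \theta_U(b)$ using \eqref{19}. The `if' direction of \Cref{prop:pref}, applied to the $n$-ary partial functions $\theta_U(a)$, $\theta_U(b)$ and $\theta_U(a \pref b)$, now forces $\theta_U(a \pref b) = \theta_U(a) \pref \theta_U(b)$. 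Hence $\theta_U$, and therefore $\theta$, is a homomorphism of the full signature, so $\theta$ is a representation of $\algebra{A}$. This identifies the specified equational class with the representation class; being equationally axiomatised, it is a variety, and since $\theta$ has a finite base whenever $\algebra{A}$ is finite, the finite representation property follows once more.

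I do not anticipate a genuine obstacle: the substantive work has already been done in \Cref{prop:pref} and \Cref{thm2}, and the only point needing a moment's care is that forming the disjoint union preserves correctness of $\pref$. This is subsumed by the general fact, recorded in \Cref{def:disjoint}, that disjoint unions of homomorphisms are homomorphisms: the bases of the constituent representations being pairwise disjoint, the preferential union of $\bigcup_i \theta_i(a_i)$ and $\bigcup_i \theta_i(b_i)$ is computed separately on each block and hence equals $\bigcup_i\bigl(\theta_i(a_i) \pref \theta_i(b_i)\bigr)$.
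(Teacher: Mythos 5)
Your proposal is correct and follows essentially the same route as the paper, which leaves this theorem without an explicit proof precisely because Proposition \ref{prop:pref} reduces correctness of $\pref$ under $\theta_U$ to the two equations \eqref{18} and \eqref{19}, with \eqref{quasi} supplied by Lemma \ref{lemma:quasi} so that no extra quasiequation (or the equation \eqref{bonus}) is needed in the presence of $\bmeet$. Your explicit check that the disjoint union respects $\pref$ blockwise is a sound filling-in of a detail the paper subsumes under Definition \ref{def:disjoint}.
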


\begin{corollary}
The class of $(\<\phantom{a}\>\compo, \A_1, \ldots, \A_n, \bmeet, \pref)$-algebras that are representable by injective $n$-ary partial functions is a variety, finitely axiomatised by the equations specified in \Cref{thm2} together with \eqref{injective}, \eqref{18} and \eqref{19}.
\end{corollary}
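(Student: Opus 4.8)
Proving this corollary amounts to combining the representation theorem established just above for the signature $(\<\phantom{a}\>\compo, \A_1, \ldots, \A_n, \bmeet, \pref)$ with the equational characterisation of injectivity via the tie operations supplied by \Cref{prop:tie}. The first thing I would record is that, although \eqref{injective} is written as a quasiequation, modulo the equations of \Cref{thm2} it is equivalent to the indexed equations \eqref{tie-injective}: within the class of models of those equations---which by \Cref{thm2} is exactly the representation class for $(\<\phantom{a}\>\compo, \A_1, \ldots, \A_n, \bmeet)$---\Cref{injectiverep} and \Cref{prop:tie} tell us, respectively, that an element is represented as an injective function precisely when it satisfies \eqref{injective} and precisely when it satisfies \eqref{tie-injective}, and both statements refer to the same representation (the disjoint union of the maps $\theta_U$ of \Cref{representation}). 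Hence the axiomatisation in the statement defines the same class as the purely equational one obtained by replacing \eqref{injective} with \eqref{tie-injective}, so by Birkhoff's theorem that class is a variety; finiteness is clear because $n$ is fixed and each indexed family of equations is finite.

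For soundness I would note that any algebra representable by injective $n$-ary partial functions is in particular representable by $n$-ary partial functions, so by the theorem above it validates the equations of \Cref{thm2} together with \eqref{18} and \eqref{19}, and it validates \eqref{tie-injective} because every injective $n$-ary partial function does (the forward half of \Cref{prop:tie}) and validity of equations is preserved under isomorphism. For completeness, suppose $\algebra{A}$ validates the equations of \Cref{thm2}, \eqref{tie-injective}, \eqref{18} and \eqref{19}. Applying \Cref{thm2} to the $(\<\phantom{a}\>\compo, \A_1, \ldots, \A_n, \bmeet)$-reduct of $\algebra{A}$ yields a representation $\theta$---built, as in \Cref{thm}, as a disjoint union of the maps $\theta_U$ of \Cref{representation}---that represents composition, the antidomain operations and intersection correctly. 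Then I would check two further things: first, that $\theta$ represents $\pref$ correctly, which follows from \eqref{18} and \eqref{19} via \Cref{prop:pref} since $\theta$ already represents $\<\phantom{a}\>\compo$ and the $\A_i$ (hence the $\D_i$) correctly; second, that $\theta$ represents every element as an injective function, which holds because $\algebra{A}$ satisfies the identities \eqref{tie-injective}, so every element of $\algebra{A}$ satisfies \eqref{tie-injective}, whence the converse half of \Cref{prop:tie} applies. Together these show $\theta$ is a representation of $\algebra{A}$ by injective $n$-ary partial functions.

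The only step requiring any genuine care is the verification that enlarging the signature by $\pref$ does not disturb the existing representation---concretely, that each $\theta_U$ of \Cref{representation} remains a homomorphism and sends $b \pref c$ to $\theta_U(b) \pref \theta_U(c)$. I expect this to reduce immediately to \eqref{18} and \eqref{19} via \Cref{prop:pref}, exactly as in the $(\<\phantom{a}\>\compo, \A_1, \ldots, \A_n, \pref)$ case treated earlier, and then to lift to the disjoint union $\theta$ because a disjoint union of homomorphisms each respecting $\pref$ respects $\pref$. Everything else is a direct appeal to results already in hand---\Cref{thm2} for the core representation, \Cref{prop:tie} for the injectivity clause, \Cref{injectiverep} together with \Cref{prop:tie} for the trade of \eqref{injective} for \eqref{tie-injective}, and Birkhoff's theorem for variety-hood---so I do not anticipate any new difficulty.
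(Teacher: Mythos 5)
Your proposal is correct and follows exactly the route the paper intends for this corollary (whose proof it leaves implicit): combine the representation of \Cref{thm2} with \eqref{18}, \eqref{19} and \Cref{prop:pref} to handle $\pref$, and with \Cref{prop:tie} to handle injectivity. Your observation that, modulo the equations of \Cref{thm2}, the quasiequations \eqref{injective} can be traded for the equations \eqref{tie-injective} (since \Cref{injectiverep} and \Cref{prop:tie} characterise the same elements via the same representation) is precisely the point needed to justify the word `variety' in the statement.
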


\begin{corollary}
The finite representation property holds for the signature $(\<\phantom{a}\>\compo, \A_1,\linebreak \ldots, \A_n, \bmeet, \pref)$ for representation by $n$-ary partial functions and for representation by injective $n$-ary partial functions.
\end{corollary}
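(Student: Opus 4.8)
The plan is to argue exactly as for the earlier finite representation property corollaries in this paper: the representation produced for this signature is the very map $\theta$ built in the proof of \Cref{thm}, and that map is defined on a base whose size is at most the cube of the size of the algebra, hence finite when the algebra is.

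First I would take a finite algebra $\algebra{A}$ of the signature $(\<\phantom{a}\>\compo, \A_1, \ldots, \A_n, \bmeet, \pref)$ that is representable, by arbitrary $n$-ary partial functions in the first case and by injective ones in the second, and note that $\algebra{A}$ therefore validates the relevant axiomatisation — the equations of \Cref{thm2} together with \eqref{18} and \eqref{19}, with \eqref{injective} adjoined in the injective case. Next I would recall how the representation theorems for this signature (the preceding theorem and corollary) construct a representation: they apply the map $\theta$ of \Cref{thm}, which is a disjoint union of the homomorphisms $\theta_{ab}$ of \Cref{representation} composed with a diagonal embedding, and they add to the analysis only the checks that each $\theta_U$ (hence $\theta$) also represents $\bmeet$ correctly — done in the proof of \Cref{thm2} — and $\pref$ correctly — immediate from \Cref{prop:pref} and equations \eqref{18}, \eqref{19} once composition and the antidomain operations are represented correctly — together with, in the injective case, the check that the appropriate elements are represented as injective functions. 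None of this alters the base of $\theta$.

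Finally I would invoke the bound noted just after \Cref{thm}: $\theta$ is a disjoint union of at most $|\algebra{A}|^2$ maps $\theta_{ab}$, each with base of size at most $|\algebra{A}|$, so the base of $\theta$ has at most $|\algebra{A}|^3$ elements. Thus a finite $\algebra{A}$ is represented on a finite base. I do not expect any genuine obstacle here; the one thing worth double-checking is that the representation theorems for the expanded signature really do reuse the construction of \Cref{thm} unchanged — which they do — so that the cube bound on the base carries over verbatim.
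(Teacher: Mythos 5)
Your proposal is correct and matches the paper's (implicit) argument exactly: the representation for the expanded signature is the same map $\theta$ constructed in the proof of \Cref{thm}, with only additional verifications that $\bmeet$, $\pref$ (and injectivity, where relevant) are represented correctly, so the bound of $\abs{\algebra{A}}^3$ on the size of the base carries over and yields the finite representation property.
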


\section{Fixset}\label{section:fixset}

As we noted previously, the fixset operations can be expressed using intersection and the antidomain operations as $\F_i\!f \coloneqq \pi_i \bmeet f$. So, having already given axiomatisations for signatures containing intersection, only the signatures without intersection are interesting to us, namely $(\<\phantom{a}\> \compo, \A_i, \F_i)$ and $(\<\phantom{a}\> \compo, \A_i, \F_i, \pref)$.

There is a simple equational axiomatisation of restrictions of the $i$th fixset in terms of composition and the domain operations, getting us halfway to axiomatising fixset.

\begin{proposition}\label{prop:fix}
In an algebra of $n$-ary partial functions, for signatures containing composition and the antidomain operations, $g$ is a restriction of $\F_i\!f$ if and only if $\D_i\!g = g$ and $\<\D_1^n\!g\> \compo f = g$.
\end{proposition}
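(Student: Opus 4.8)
The plan is to verify the two set-theoretic equivalences directly in an arbitrary algebra of $n$-ary partial functions, as was done for preferential union in \Cref{prop:pref}. The single computation that both directions rest on is the following: because each $\D_j\!g$ is a restriction of the $j$th projection, in the defining existential for $\<\D_1^n\!g\> \compo f$ the intermediate tuple $\vec y$ is forced to equal the input tuple $\vec x$, and such a $\vec y$ exists precisely when $\vec x$ lies in the domain of $g$; hence $\<\D_1^n\!g\> \compo f$ is exactly $f$ restricted to the domain of $g$. I would also record that $\D_i\!g = g$ holds if and only if $g$ is a restriction of the $i$th projection, i.e. $g(\vec x) = x_i$ for every $\vec x$ in the domain of $g$ (since $\D_i\!g$ always has the same domain as $g$).

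For the forward implication I would assume $g$ is a restriction of $\F_i\!f$. As $\F_i\!f$ is contained in the $i$th projection, so is $g$, which gives $\D_i\!g = g$ by the remark above. As $\F_i\!f$ is contained in $f$, the domain of $g$ is contained in that of $f$, so $\<\D_1^n\!g\> \compo f$, being $f$ restricted to the domain of $g$, agrees with $g$ everywhere on that domain and is undefined elsewhere; hence $\<\D_1^n\!g\> \compo f = g$. For the converse I would assume both equations. The first gives $g(\vec x) = x_i$ for every $\vec x$ in the domain of $g$; the second, combined with the computation above, shows that $f$ is defined at each such $\vec x$ with $f(\vec x) = g(\vec x) = x_i$. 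Thus $(\vec x, x_i) = (\vec x, g(\vec x)) \in f$, so $(\vec x, g(\vec x)) \in \F_i\!f$, and $g$ is a restriction of $\F_i\!f$.

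I do not expect a genuine obstacle here; the only step that needs care is the bookkeeping for $\<\D_1^n\!g\> \compo f$ — namely seeing that left-composition with the tuple of domain-restricted projections $\<\D_1^n\!g\>$ simultaneously pins the intermediate tuple to the input tuple and relativises to the domain of $g$ — after which both directions are immediate. (The hypothesis that the signature contains the antidomain operations is used only to ensure that $\D_i$ is available as the derived operation $\A_i^2$.)
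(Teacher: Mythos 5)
Your proof is correct and takes essentially the same route as the paper: the paper simply observes that $\F_i\!f = \pi_i \cap f$, so being a restriction of $\F_i\!f$ decomposes into being a restriction of $\pi_i$ (equivalent to $\D_i\!g = g$) and being a restriction of $f$ (equivalent to $\<\D_1^n\!g\> \compo f = g$). Your write-up just makes explicit the computation, which the paper asserts without proof, that $\<\D_1^n\!g\> \compo f$ is $f$ restricted to the domain of $g$.
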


\begin{proof}
By definition, $\F_i\!f = \pi_i \cap f$ and so $g$ is a restriction of $\F_i\!f$ if and only if $g$ is both a restriction of $\pi_i$ and a restriction of $f$. Being a restriction of the $i$th projection is equivalent to satisfying $\D_i\!g = g$ and being a restriction of $f$ is equivalent to satisfying $\<\D_1^n\!g\> \compo f = g$.
\end{proof}

The upshot of \Cref{prop:fix} is that the following equations are valid and ensure that any representation of a $(\<\phantom{a}\> \compo, \A_1, \ldots, \A_n)$-reduct represents each $\F_i\!a$ both as a restriction of the $i$th projection and as a restriction of the representation of $a$.
\begin{align}\label{fix1}
\D_i(\F_i\!a) &= \F_i\!a && \text{for every }i\\
\label{fix2}
\<\D_1^n(\F_i\!a)\> \compo a &= \F_i\!a && \text{for every }i
\end{align}
Hence adding \eqref{fix1} and \eqref{fix2} as axioms is sufficient to give $\theta(\F_i(a)) \subseteq \F_i(\theta(a))$ in \Cref{thm}, for every $i$. The next proposition presents valid quasiequations that are sufficient for the reverse inclusions to hold.

\begin{proposition}\label{prop:fix2}
The following indexed quasiequations are valid for algebras representable by $n$-ary partial functions for any signature containing composition and the fixset operations.
\begin{align}\label{fix3}
\boldsymbol{b} \compo a = b_i &\limplies \boldsymbol{b} \compo \F_i\!a = b_i && \text{for every }i
\end{align}
Further, let $\algebra{A}$ be an algebra of a signature containing composition and the antidomain and fixset operations and suppose the $(\<\phantom{a}\> \compo, \A_1, \ldots, \A_n)$-reduct of $\algebra{A}$ is representable by $n$-ary partial functions. Let $\theta$ be the representation of the reduct described in \Cref{thm}. If $\algebra{A}$ satisfies the $i$-indexed version of \eqref{fix3} then $\theta(\F_i(a)) \supseteq \F_i(\theta(a))$.
\end{proposition}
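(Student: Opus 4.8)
The plan is to split the statement into two parts, as the phrasing suggests. First I would establish the validity of the quasiequations \eqref{fix3} for representable algebras; then I would establish the inclusion $\theta(\F_i(a)) \supseteq \F_i(\theta(a))$ under the stated hypotheses.

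For the validity of \eqref{fix3}, by the usual reductions (products and subalgebras preserve quasiequations, representations are isomorphisms) it suffices to work in an arbitrary square algebra of $n$-ary partial functions with base $X$. So suppose $\boldsymbol{b} \compo a = b_i$ as functions, and let $\boldsymbol{x} \in X^n$ be a point at which $\boldsymbol{b} \compo \F_i\!a$ is defined. Then $b_1, \ldots, b_n$ are all defined at $\boldsymbol{x}$ and $\F_i\!a$ is defined at $\<b_1(\boldsymbol{x}), \ldots, b_n(\boldsymbol{x})\>$; since $\F_i\!a$ is a restriction of the $i$th projection and also a restriction of $a$, this forces $a(b_1(\boldsymbol{x}), \ldots, b_n(\boldsymbol{x})) = b_i(\boldsymbol{x})$ and $(\boldsymbol{b} \compo \F_i\!a)(\boldsymbol{x}) = b_i(\boldsymbol{x})$. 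Conversely, at any $\boldsymbol{x}$ where $b_i$ is defined, the assumption $\boldsymbol{b} \compo a = b_i$ tells us $\boldsymbol{b} \compo a$ is defined at $\boldsymbol{x}$ with value $b_i(\boldsymbol{x})$; in particular $b_1, \ldots, b_n$ are all defined at $\boldsymbol{x}$ and $a$ is defined at $\boldsymbol{b}(\boldsymbol{x})$ with value $b_i(\boldsymbol{x})$, the $i$th coordinate of $\boldsymbol{b}(\boldsymbol{x})$, which says exactly that $\F_i\!a$ is defined at $\boldsymbol{b}(\boldsymbol{x})$; hence $\boldsymbol{b} \compo \F_i\!a$ is defined at $\boldsymbol{x}$ with value $b_i(\boldsymbol{x})$. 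So the two functions $\boldsymbol{b} \compo \F_i\!a$ and $b_i$ coincide, giving \eqref{fix3}.

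For the second part, let $\algebra{A}$ be as hypothesised and let $\theta$ be the representation of its $(\<\phantom{a}\>\compo, \A_1, \ldots, \A_n)$-reduct from \Cref{thm}. Recall that $\theta$ was built as a disjoint union of the maps $\theta_U$ of \Cref{representation}, so it suffices to prove $\theta_U(\F_i(a)) \supseteq \F_i(\theta_U(a))$ for each ultrafilter $U$ of $\A$-elements; the inclusion is then inherited by the disjoint union. So suppose $([\boldsymbol{b}], [d]) \in \F_i(\theta_U(a))$, which means $[d] = [b_i]$ and $\theta_U(a)([\boldsymbol{b}]) = [b_i]$, equivalently that $\boldsymbol{b} \compo a$ is inequivalent to $0$ and there is $\alpha \in U$ with $\boldsymbol{\alpha} \compo (\boldsymbol{b} \compo a) = \boldsymbol{\alpha} \compo b_i$. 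By superassociativity this rewrites as $\<\boldsymbol{\alpha} \compo b_1, \ldots, \boldsymbol{\alpha} \compo b_n\> \compo a = \boldsymbol{\alpha} \compo b_i$, which is an instance of the antecedent of the $i$th quasiequation in \eqref{fix3} with the tuple $\<\boldsymbol{\alpha} \compo b_1, \ldots, \boldsymbol{\alpha} \compo b_n\>$ in place of $\boldsymbol{b}$; applying \eqref{fix3} (which holds in $\algebra{A}$ by hypothesis) gives $\<\boldsymbol{\alpha} \compo b_1, \ldots, \boldsymbol{\alpha} \compo b_n\> \compo \F_i\!a = \boldsymbol{\alpha} \compo b_i$, i.e. $\boldsymbol{\alpha} \compo (\boldsymbol{b} \compo \F_i\!a) = \boldsymbol{\alpha} \compo b_i$. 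Since $\alpha \in U$, this says $[\boldsymbol{b} \compo \F_i\!a] = [b_i] = [d]$, and since $[b_i] \neq [0]$ we conclude $([\boldsymbol{b}], [d]) \in \theta_U(\F_i(a))$, as required.

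The only mild subtlety, and the step I would flag as needing a little care, is the bookkeeping in the reverse-inclusion argument: one must be sure that the antecedent of \eqref{fix3} is genuinely an instance (after the superassociativity rewrite the "$\boldsymbol{b}$'' of the quasiequation is the composite tuple $\<\boldsymbol{\alpha} \compo b_j\>_j$, not $\boldsymbol{b}$ itself), and that $[b_i] \neq [0]$ so that the pair really lands in the graph of $\theta_U(\F_i a)$ rather than being spuriously undefined — but $[b_i] \neq [0]$ follows because $\boldsymbol{b} \compo a$ is inequivalent to $0$ and $\boldsymbol{\alpha} \compo (\boldsymbol{b} \compo a) = \boldsymbol{\alpha} \compo b_i$ with $\alpha \in U$. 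Everything else is routine pointwise checking of partial function identities and the standard ultrafilter argument, so I do not expect any real obstacle.
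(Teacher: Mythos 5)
Your proposal is correct and follows essentially the same route as the paper: a pointwise verification of \eqref{fix3} in a square algebra of partial functions for the first part, and for the second part a reduction to the maps $\theta_U$, followed by the superassociativity rewrite, an application of \eqref{fix3} to the tuple $\<\boldsymbol{\alpha}\compo b_1,\ldots,\boldsymbol{\alpha}\compo b_n\>$, and a second superassociativity rewrite. The bookkeeping point you flag (that the instance of \eqref{fix3} uses the composite tuple, and that $[b_i]\neq[0]$) is handled identically, if more tersely, in the paper's proof.
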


\begin{proof}
For the first part it is sufficient to prove validity for an arbitrary square algebra of $n$-ary partial functions. So let $a$ and $b_1, \ldots, b_n$ be elements of such an algebra, with base $X$, and suppose $\boldsymbol{b} \compo a = b_i$. If $\boldsymbol{b} \compo \F_i\!a$ is defined on $\boldsymbol{x}$, with value $z$, then $b_1, \ldots, b_n$ are all defined on $\boldsymbol{x}$ and $\F_i\!a$ is defined on $\<b_1(\boldsymbol{x}), \ldots, b_n(\boldsymbol{x})\>$, so $a$ is too, with value $b_i(\boldsymbol{x}) = z$. Hence $\boldsymbol{b} \compo \F_i\!a \subseteq b_i$.

Conversely, if $b_i$ is defined on $\boldsymbol{x}$ then, by the assumption, $\boldsymbol{b} \compo a$ is defined on $\boldsymbol{x}$, with value $b_i(\boldsymbol{x})$. Then $b_1, \ldots, b_n$ are all defined on $\boldsymbol{x}$ and $a$ is defined on $\<b_1(\boldsymbol{x}), \ldots, b_n(\boldsymbol{x})\>$, also with value $b_i(\boldsymbol{x})$. This tells us that $\F_i\!a$ is defined on $\<b_1(\boldsymbol{x}), \ldots, b_n(\boldsymbol{x})\>$ and so $\boldsymbol{b} \compo a$ is defined on $\boldsymbol{x}$, necessarily with the same value as $b_i$. Hence $\boldsymbol{b} \compo \F_i\!a \supseteq b_i$ and we conclude that $\boldsymbol{b} \compo \F_i\!a$ and  $b_i$ are equal, so \eqref{fix3} is valid.

For the second part it is sufficient to prove that, for any ultrafilter $U$ of $\A$-elements, the homomorphism $\theta_U$, as defined in \Cref{representation}, satisfies $\theta_U(\F_i(a)) \supseteq \F_i(\theta_U(a))$. So suppose that $([\boldsymbol{b}], [c]) \in \F_i(\theta_U(a))$. Then $[c] = [b_i] \neq [0]$ and $([\boldsymbol{b}], [b_i]) \in \theta_U(a)$, that is, there is some $\alpha \in U$ such that $\boldsymbol{\alpha} \compo (\boldsymbol{b} \compo a) = \boldsymbol{\alpha} \compo b_i$. Then by superassociativity
\begin{align*}
 \<\boldsymbol{\alpha} \compo b_1, \ldots, \boldsymbol{\alpha} \compo b_n\> \compo a &= \boldsymbol{\alpha} \compo b_i
\intertext{so by \eqref{fix3}}
 \<\boldsymbol{\alpha} \compo b_1, \ldots, \boldsymbol{\alpha} \compo b_n\> \compo \F_i\!a &= \boldsymbol{\alpha} \compo b_i
\intertext{and then by superassociativity}
\boldsymbol{\alpha} \compo (\boldsymbol{b} \compo \F_i\!a) &= \boldsymbol{\alpha} \compo b_i
\end{align*}
and so $[\boldsymbol{b} \compo \F_i\!a] = [b_i]$. Hence $([\boldsymbol{b}], [c]) = ([\boldsymbol{b}], [b_i]) \in \theta_U(\F_i(a))$ and we are done.
\end{proof}

Combining Propositions \ref{prop:fix} and \ref{prop:fix2}, we obtain quasiequational axiomatisations for signatures containing the fixset operations.

\begin{theorem}
The class of $(\<\phantom{a}\>\compo, \A_1, \ldots, \A_n, \F_1, \ldots \F_n)$-algebras that are representable by $n$-ary partial functions is a quasivariety, finitely axiomatised by the (quasi)equations specified in \Cref{thm} together with \eqref{fix1}--\,\eqref{fix3}.
\end{theorem}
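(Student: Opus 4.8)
The plan is to obtain the theorem by assembling \Cref{thm}, \Cref{prop:fix} and \Cref{prop:fix2}: all of the substantive work has already been done in those results, and what remains is to observe that the representation constructed for the $(\compo, \A_1, \ldots, \A_n)$-reduct extends unchanged to the fixset operations.

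First I would dispatch soundness. The (quasi)equations \eqref{super}--\eqref{hide} are valid for the representation class --- indeed already for the $(\compo, \A_1, \ldots, \A_n)$-reduct of any algebra representable by $n$-ary partial functions --- by \Cref{axioms} and \Cref{prop:hide}. Equations \eqref{fix1} and \eqref{fix2} are valid by \Cref{prop:fix} (applied with $g \coloneqq \F_i\!f$, which is trivially a restriction of $\F_i\!f$), and the indexed quasiequations \eqref{fix3} are valid by the first part of \Cref{prop:fix2}. Hence every representable $(\compo, \A_1, \ldots, \A_n, \F_1, \ldots, \F_n)$-algebra satisfies all of the listed axioms.

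For completeness, let $\algebra{A}$ be a $(\compo, \A_1, \ldots, \A_n, \F_1, \ldots, \F_n)$-algebra validating these axioms. Its $(\compo, \A_1, \ldots, \A_n)$-reduct then validates \eqref{super}--\eqref{hide}, so by \Cref{thm} there is a representation $\theta$ of the reduct by $n$-ary partial functions. It suffices to check that this $\theta$ also represents the fixset operations correctly, i.e.\ $\theta(\F_i(a)) = \F_i(\theta(a))$ for every $a \in \algebra{A}$ and every $i$. The inclusion $\theta(\F_i(a)) \subseteq \F_i(\theta(a))$ holds because $\algebra{A}$ satisfies \eqref{fix1} and \eqref{fix2}: as noted immediately after \Cref{prop:fix}, the first forces $\theta(\F_i(a))$ to be a restriction of the $i$th projection (as $\theta$ preserves $\D_i$) and the second forces it to be a restriction of $\theta(a)$ (as $\theta$ preserves the order $\leq$, which for partial functions is set inclusion), so $\theta(\F_i(a)) \subseteq \pi_i \cap \theta(a) = \F_i(\theta(a))$. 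The reverse inclusion $\theta(\F_i(a)) \supseteq \F_i(\theta(a))$ is exactly the second part of \Cref{prop:fix2}, whose hypotheses are met since $\algebra{A}$ satisfies \eqref{fix3} and $\theta$ is the representation of the reduct described in \Cref{thm}. Therefore $\theta$ is a bijective homomorphism onto its image, which is closed under $\compo, \A_1, \ldots, \A_n$ (by \Cref{thm}) and under each $\F_i$ (since $\F_i(\theta(a)) = \theta(\F_i(a))$ lies in the image), hence is an algebra of $n$-ary partial functions of the full signature; so $\theta$ is a representation of $\algebra{A}$.

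Combining the two directions, the representable $(\compo, \A_1, \ldots, \A_n, \F_1, \ldots, \F_n)$-algebras are precisely the models of the finite set of (quasi)equations \eqref{super}--\eqref{hide} and \eqref{fix1}--\eqref{fix3}, so the class is a finitely axiomatised quasivariety. I do not expect any real obstacle; the single point worth a sentence is that the representation delivered by \Cref{thm} is a disjoint union of the ultrafilter homomorphisms $\theta_U$, so one should record that a disjoint union of homomorphisms each representing the fixset operations correctly again does so --- immediate, since the component bases are pairwise disjoint, so every $(n+1)$-tuple of $E$-equivalent points, and in particular every tuple relevant to a fixset operation, lies within a single component. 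This is already folded into \Cref{prop:fix} and \Cref{prop:fix2} as they are invoked for the $\theta$ of \Cref{thm}.
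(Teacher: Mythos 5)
Your proposal is correct and follows the paper's own route exactly: the paper likewise derives the theorem by combining \Cref{thm} with \Cref{prop:fix} (giving validity of \eqref{fix1}--\eqref{fix2} and the inclusion $\theta(\F_i(a)) \subseteq \F_i(\theta(a))$) and \Cref{prop:fix2} (giving validity of \eqref{fix3} and the reverse inclusion). Your extra remark about disjoint unions of the $\theta_U$ is a harmless elaboration of what the paper leaves implicit.
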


\begin{corollary}
The class of $(\<\phantom{a}\>\compo, \A_1, \ldots, \A_n, \F_1, \ldots \F_n)$-algebras that are representable by injective $n$-ary partial functions is a quasivariety, finitely axiomatised by the (quasi)equations specified in \Cref{thm} together with \eqref{injective} and \eqref{fix1}--\,\eqref{fix3}.
\end{corollary}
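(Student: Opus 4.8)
The plan is to derive this corollary by combining two things already established: the theorem immediately above, which shows that \eqref{super}--\eqref{hide} together with \eqref{fix1}--\eqref{fix3} axiomatise the $(\<\phantom{a}\>\compo, \A_1, \ldots, \A_n, \F_1, \ldots, \F_n)$-algebras representable by $n$-ary partial functions, and moreover that the representation witnessing this is the map $\theta$ of \Cref{thm}; and \Cref{injectiverep}, which identifies exactly the elements that $\theta$ sends to injective functions. As usual the argument has a soundness half and a completeness half, and once both are in place the class is a quasivariety simply because it has been presented by a finite set of quasi-identities.

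For soundness I would observe that an injective $n$-ary partial function is in particular an $n$-ary partial function, so an algebra representable by injective $n$-ary partial functions is, in particular, representable by $n$-ary partial functions; hence it validates the (quasi)equations of \Cref{thm} and, by \Cref{prop:fix} and \Cref{prop:fix2}, also \eqref{fix1}--\eqref{fix3}. The only remaining axiom is \eqref{injective}, whose validity for algebras of injective $n$-ary partial functions was already recorded in the remark following \Cref{injectiverep}: the proof of that proposition shows that any element represented as an injective function by any representation must satisfy \eqref{injective}.

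For completeness I would take an algebra $\algebra{A}$ satisfying all the listed (quasi)equations. Since $\algebra{A}$ satisfies \eqref{super}--\eqref{hide} and \eqref{fix1}--\eqref{fix3}, the preceding theorem gives that the map $\theta$ of \Cref{thm} is a faithful representation of $\algebra{A}$ by $n$-ary partial functions in which every $\F_i$ is represented correctly. By \Cref{injectiverep}, $\theta$ represents as injective functions precisely the injective elements of $\algebra{A}$; and the hypothesis that $\algebra{A}$ satisfies the indexed quasiequations \eqref{injective} says that every element of $\algebra{A}$ is injective. Therefore every element of $\algebra{A}$ is represented by $\theta$ as an injective partial function, so the image of $\theta$ is an algebra of injective $n$-ary partial functions and $\theta$ is the desired representation.

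There is little genuine difficulty here, since all the computational work is inherited. The one point I would be careful to spell out is that \Cref{injectiverep}, and the characterisation \eqref{injective} of injective elements, are phrased for the bare signature $(\<\phantom{a}\>\compo, \A_1, \ldots, \A_n)$; so I would remark explicitly that neither the statement \eqref{injective} nor the argument of \Cref{injectiverep} — which refers only to the homomorphisms $\theta_U$ of \Cref{representation} and to the $(\<\phantom{a}\>\compo, \A_1, \ldots, \A_n)$-reduct of $\algebra{A}$ — is affected by adjoining the fixset operations, so that the characterisation of injective elements and the fact that $\theta$ respects it transfer verbatim to the present signature. Exactly as before it follows in addition that the finite representation property holds, because $\theta$ has a finite base whenever $\algebra{A}$ is finite.
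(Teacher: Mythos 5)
Your proposal is correct and follows exactly the route the paper intends for this (unproved) corollary: combine the preceding theorem's representation $\theta$ with \Cref{injectiverep} to conclude that an algebra satisfying \eqref{injective} has all its elements represented injectively, and use the remark after \Cref{injectiverep} for soundness of \eqref{injective}. Your explicit note that \Cref{injectiverep} transfers verbatim to the enlarged signature, since it concerns only the $(\<\phantom{a}\>\compo, \A_1, \ldots, \A_n)$-reduct and the maps $\theta_U$, is a worthwhile clarification but not a departure from the paper's argument.
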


\begin{corollary}
The finite representation property holds for the signature $(\<\phantom{a}\>\compo, \A_1,\linebreak \ldots, \A_n, \F_1, \ldots \F_n)$ for representation by $n$-ary partial functions and for representation by injective $n$-ary partial functions.
\end{corollary}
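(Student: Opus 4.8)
The plan is to observe that this corollary, like the two finite representation corollaries in earlier sections, is simply a bookkeeping consequence of the representation constructed in \Cref{thm}: the theorem and corollary immediately preceding this statement use exactly the homomorphism $\theta$ of \Cref{thm} (respectively its injective refinement behind \Cref{cor:1}), merely checking in addition that $\theta$ interprets the fixset symbols correctly. So it suffices to recall that that construction sends a finite algebra to an algebra of functions on a finite base, and to note that adjoining the $\F_i$ changes neither the base nor the construction.

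Concretely, I would let $\algebra{A}$ be a finite representable algebra of signature $(\<\phantom{a}\>\compo, \A_1, \ldots, \A_n, \F_1, \ldots, \F_n)$; then $\algebra{A}$ validates the axioms of the fixset representation theorem, so in particular its $(\<\phantom{a}\>\compo, \A_1, \ldots, \A_n)$-reduct validates \eqref{super}--\eqref{hide} and $\algebra{A}$ itself validates \eqref{fix1}--\eqref{fix3}. Running the construction of \Cref{thm} on the reduct, for each pair $a \nleq b$ we choose an ultrafilter $U_{ab}$ of $\A$-elements, form $\theta_{ab}$ as in \Cref{representation}, and take the disjoint union $\theta$. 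As remarked just after \Cref{thm}, when $\algebra{A}$ is finite each base $\{[c] \mid c \in \algebra{A}\} \setminus \{[0]\}$ has at most $\abs{\algebra{A}}$ points and there are at most $\abs{\algebra{A}}^2$ pairs, so $\theta$ has a base of at most $\abs{\algebra{A}}^3$ points. By \eqref{fix1}, \eqref{fix2} (via the remark preceding \Cref{prop:fix2}) we get $\theta(\F_i(a)) \subseteq \F_i(\theta(a))$, and \Cref{prop:fix2}, applied using \eqref{fix3}, gives the reverse inclusion; hence $\theta$ is a representation of $\algebra{A}$ on this finite base. The injective case is identical, additionally invoking \Cref{injectiverep} together with the fact that a disjoint union of injective functions is injective to see that $\theta$ represents $\algebra{A}$ by injective $n$-ary partial functions.

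I do not expect a genuine obstacle here: the only point that deserves a sentence is that no step of the construction of \Cref{thm} — the passage to the right congruences $\sim_{U_{ab}}$, the choice of ultrafilters, the disjoint union over the finitely many pairs — can turn a finite algebra's base into an infinite one, and this is already implicit in the cube bound recorded after \Cref{thm}. All the mathematical content lives in the earlier sections; this corollary follows because the fixset axiomatisation is the $(\<\phantom{a}\>\compo, \A_i)$-axiomatisation augmented only by (quasi)equations whose verification by $\theta$ reuses that same finite base.
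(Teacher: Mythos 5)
Your proposal is correct and matches the paper's (implicit) argument exactly: the paper derives all of these finite representation corollaries from the single observation, recorded after \Cref{thm}, that the representation there has a base of size at most the cube of the size of the algebra, and the fixset and injective cases reuse that same representation unchanged. Nothing further is needed.
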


\begin{theorem}
The class of $(\<\phantom{a}\>\compo, \A_1, \ldots, \A_n, \F_1, \ldots \F_n, \pref)$-algebras that are representable by $n$-ary partial functions is a quasivariety, finitely axiomatised by the (quasi)equations specified in \Cref{thm} together with \eqref{18}, \eqref{19} and \eqref{fix1}--\,\eqref{fix3}.
\end{theorem}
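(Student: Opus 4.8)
The plan is to follow the template of the earlier representation theorems in \Cref{section:preferential} and \Cref{section:fixset}: soundness of the axioms has already been established piece by piece, and for completeness we will show that the representation $\theta$ built in the proof of \Cref{thm} continues to work. For soundness, each listed (quasi)equation is valid for algebras representable by $n$-ary partial functions: \eqref{super}--\eqref{hide} by \Cref{axioms} and \Cref{prop:hide}, the preferential-union equations \eqref{18} and \eqref{19} by \Cref{prop:pref}, the fixset equations \eqref{fix1} and \eqref{fix2} by \Cref{prop:fix}, and the fixset quasiequations \eqref{fix3} by the first part of \Cref{prop:fix2}.

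For completeness, let $\algebra{A}$ be a $(\<\phantom{a}\>\compo, \A_1, \ldots, \A_n, \F_1, \ldots, \F_n, \pref)$-algebra satisfying the whole axiomatisation. Its $(\<\phantom{a}\>\compo, \A_1, \ldots, \A_n)$-reduct satisfies \eqref{super}--\eqref{hide}, and the $\A$-elements and the construction of $\theta$ in \Cref{thm} depend only on this reduct, so $\theta$ is a representation of the reduct. It then suffices to check that $\theta$ automatically represents $\pref$ and each $\F_i$ correctly, for it then follows---exactly as in the earlier theorems---that the image of $\theta$ is closed under the concrete operations, that $\theta$ is an isomorphism onto that image, and hence that $\theta$ is a representation of $\algebra{A}$ over the full signature.

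For $\pref$: applying $\theta$ to the instances of \eqref{18} and \eqref{19} holding in $\algebra{A}$, and using that $\theta$ is a homomorphism for composition and for $\A_i$ (hence for $\D_i = \A_i^2$), we obtain $\<\D_1^n\!\theta(a)\> \compo \theta(a \pref b) = \theta(a)$ and $\<\A_1^n\!\theta(a)\> \compo \theta(a \pref b) = \<\A_1^n\!\theta(a)\> \compo \theta(b)$ in the function algebra; the ``if'' direction of \Cref{prop:pref}, which is a pointwise fact about an arbitrary algebra of $n$-ary partial functions, then gives $\theta(a \pref b) = \theta(a) \pref \theta(b)$. For $\F_i$: applying $\theta$ to the instances of \eqref{fix1} and \eqref{fix2} gives $\D_i(\theta(\F_i\!a)) = \theta(\F_i\!a)$ and $\<\D_1^n\!\theta(\F_i\!a)\> \compo \theta(a) = \theta(\F_i\!a)$, so \Cref{prop:fix} yields $\theta(\F_i\!a) \subseteq \F_i(\theta(a))$; the reverse inclusion $\theta(\F_i\!a) \supseteq \F_i(\theta(a))$ is exactly the second part of \Cref{prop:fix2}, whose hypothesis that $\algebra{A}$ satisfies \eqref{fix3} is available. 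Hence $\theta(\F_i\!a) = \F_i(\theta(a))$ for every $i$. Finally, since the axiomatisation is a finite set of quasiequations, the class it defines is a quasivariety.

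The individual steps are routine; the only point requiring care is ensuring that the auxiliary results are applied to the correct object. \Cref{prop:fix2} is already phrased for the disjoint-union representation $\theta$ of \Cref{thm}, so nothing extra is needed there, and the ``if'' directions of \Cref{prop:pref} and \Cref{prop:fix} concern arbitrary algebras of $n$-ary partial functions and so apply to the image of $\theta$. The proof is therefore essentially bookkeeping, assembling the facts from \Cref{section:preferential} and the first half of \Cref{section:fixset} with the representation of \Cref{thm}, and I do not anticipate a genuine obstacle.
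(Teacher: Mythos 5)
Your proposal is correct and matches the paper's approach: the paper states this theorem without a separate proof, obtaining it exactly as you do by combining the representation of \Cref{thm} with \Cref{prop:pref} (via axioms \eqref{18} and \eqref{19}) for preferential union and with \Cref{prop:fix} and \Cref{prop:fix2} (via \eqref{fix1}--\eqref{fix3}) for the fixset operations. The bookkeeping you supply is the intended argument.
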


\begin{corollary}
The class of $(\<\phantom{a}\>\compo, \A_1, \ldots, \A_n, \F_1, \ldots \F_n, \pref)$-algebras that are representable by injective $n$-ary partial functions is a quasivariety, finitely axiomatised by the (quasi)equations specified in \Cref{thm} together with \eqref{injective}, \eqref{18}, \eqref{19} and \eqref{fix1}--\,\eqref{fix3}.
\end{corollary}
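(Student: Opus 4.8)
The plan is to show that the displayed finite set of (quasi)equations axiomatises the representation class; since it is a finite set of quasiequations, this yields at once that the class is a finitely axiomatised quasivariety. One direction is soundness: every algebra representable by injective $n$-ary partial functions validates all the listed (quasi)equations. For \eqref{super}--\eqref{hide} this is \Cref{axioms} and \Cref{prop:hide} together with the remarks establishing the validity of \eqref{newdomain}; for \eqref{injective} it is the observation following \Cref{injectiverep}; for \eqref{18} and \eqref{19} it is \Cref{prop:pref}; for \eqref{fix1} and \eqref{fix2} it is \Cref{prop:fix}; and for \eqref{fix3} it is the first part of \Cref{prop:fix2}. (An injective representation is in particular a representation, so validity in the non-injective case suffices for every axiom except \eqref{injective}.)

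For completeness, I would let $\algebra{A}$ be an algebra of the signature $(\<\phantom{a}\>\compo, \A_1, \ldots, \A_n, \F_1, \ldots, \F_n, \pref)$ validating all the listed (quasi)equations. Its $(\<\phantom{a}\>\compo, \A_1, \ldots, \A_n)$-reduct then validates \eqref{super}--\eqref{hide}, so by \Cref{thm} this reduct is representable; let $\theta$ be the representation described there. I would then check, one operation family at a time, that $\theta$ also respects the additional operations. For $\pref$: applying the isomorphism $\theta$ to the instances of \eqref{18} and \eqref{19} turns them into the concrete identities $\<\D_1^n\theta(a)\>\compo\theta(a\pref b)=\theta(a)$ and $\<\A_1^n\theta(a)\>\compo\theta(a\pref b)=\<\A_1^n\theta(a)\>\compo\theta(b)$, whence \Cref{prop:pref} gives $\theta(a\pref b)=\theta(a)\pref\theta(b)$. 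For each $\F_i$: applying $\theta$ to \eqref{fix1} and \eqref{fix2} and invoking \Cref{prop:fix} gives $\theta(\F_i a)\subseteq \F_i(\theta(a))$, while the second part of \Cref{prop:fix2}, which needs exactly the $i$-indexed instance of \eqref{fix3}, supplies the reverse inclusion, so $\theta(\F_i a)=\F_i(\theta(a))$. Finally, since $\algebra{A}$ satisfies \eqref{injective}, every element of $\algebra{A}$ is an injective element, so \Cref{injectiverep} tells us $\theta$ represents every element as an injective function. Hence $\theta$ is an isomorphism onto an algebra of injective $n$-ary partial functions of the full signature, i.e.\ a representation of $\algebra{A}$ of the desired kind.

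I do not expect a serious obstacle here: this corollary is essentially an assembly of soundness and completeness facts already established for the constituent signatures. The only point needing a little care is that all three extensions must be accommodated by a single common representation, namely the $\theta$ coming from \Cref{thm}, and that the image of $\theta$ is closed under the concrete $\F_i$ and $\pref$ operations; but this is automatic once the three equalities $\theta(\F_i a)=\F_i(\theta(a))$ and $\theta(a\pref b)=\theta(a)\pref\theta(b)$ are in hand, since they exhibit $\theta$ as a homomorphism for the full signature, and a bijective homomorphism onto an algebra of $n$-ary partial functions is a representation.
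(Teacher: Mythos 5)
Your proposal is correct and follows essentially the same route as the paper, which treats this corollary as an assembly of the already-established facts: soundness from the validity propositions, and completeness by showing that the single representation $\theta$ of \Cref{thm} also respects $\pref$ (via \eqref{18}, \eqref{19} and \Cref{prop:pref}), each $\F_i$ (via \eqref{fix1}--\eqref{fix3} and Propositions \ref{prop:fix} and \ref{prop:fix2}), and injectivity (via \eqref{injective} and \Cref{injectiverep}). Your closing remark about closure of the image under the concrete operations is exactly the right point of care and is handled correctly.
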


\begin{corollary}
The finite representation property holds for the signature $(\<\phantom{a}\>\compo, \A_1,\linebreak \ldots, \A_n, \F_1, \ldots \F_n, \pref)$ for representation by $n$-ary partial functions and for representation by injective $n$-ary partial functions.
\end{corollary}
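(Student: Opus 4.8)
The plan is to obtain this corollary in exactly the way the earlier finite-representation-property corollaries were obtained: by inspecting the base of the representation constructed in the two results immediately preceding it and noting that it is finite whenever the algebra is. First I would recall that the representation witnessing membership of a finite algebra in either of these classes is not a fresh construction but the map $\theta$ of \Cref{thm}\,---\,a disjoint union, in the sense of \Cref{def:disjoint}, of the homomorphisms $\theta_{U_{ab}}$ of \Cref{representation}, one for each pair $(a,b)$ with $a \nleq b$\,---\,with the extra operations $\F_1, \ldots, \F_n$ and $\pref$ shown to be represented correctly by the verifications of \Cref{section:preferential} and \Cref{section:fixset}. Crucially, \Cref{prop:pref}, \Cref{prop:fix}, \Cref{prop:fix2} and (in the injective case) the reasoning of \Cref{injectiverep} and \Cref{prop:tie} all concern precisely this $\theta$, so no other representation is in play.

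Second, I would bound the size of the base. Each $\theta_{U_{ab}}$ has base $X_{ab} = \{[c] \mid c \in \algebra{A}\} \setminus \{[0]\}$, a quotient of (a subset of) $\algebra{A}$ by $\sim_{U_{ab}}$, so $|X_{ab}| \leq |\algebra{A}|$; and there are at most $|\algebra{A}|^2$ pairs $(a,b)$ with $a \nleq b$, so the disjoint union has base $\bigcup_{a \nleq b} X_{ab}$ of cardinality at most $|\algebra{A}|^3$. Hence if $\algebra{A}$ is finite then $\theta$ is a representation of $\algebra{A}$ on a finite base, which is exactly the finite representation property (\Cref{def:frp}) for representation by $n$-ary partial functions. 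The injective case is identical: the injective representation theorem for this signature uses the very same $\theta$, the additional quasiequations \eqref{injective} only forcing injective elements to be sent to injective functions without altering the base, so again a finite algebra is represented on a base of size at most $|\algebra{A}|^3$.

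I do not anticipate a real obstacle; the only point needing care is confirming that the fixset and preferential-union representation theorems genuinely invoke the $\theta$ of \Cref{thm} rather than building something new\,---\,but this is explicit in the statements of \Cref{prop:fix2} and \Cref{prop:pref} and in the proofs combining them. So the corollary collapses to the cubic bound on the base already observed just after \Cref{thm}, and nothing further is required.
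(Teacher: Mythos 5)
Your proposal is correct and is exactly the argument the paper intends: the representation theorems for the signature with $\F_i$ and $\pref$ (and their injective variants) reuse the map $\theta$ of \Cref{thm} unchanged, merely verifying via \Cref{prop:pref}, \Cref{prop:fix}, \Cref{prop:fix2} and \eqref{injective} that the extra operations and injectivity are respected, so the cubic bound on the base noted after \Cref{thm} gives the finite representation property immediately. Nothing further is needed.
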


\section{Equational Theories}\label{section:equational}

We conclude with an examination of the computational complexity of equational theories. The following theorem and proof are straightforward adaptations to the $n$-ary case of unary versions that appear in \cite{hirsch}.

\begin{theorem}\label{thm:equational}
Let $\sigma$ be any signature whose symbols are a subset of $\{\<\phantom{a}\> \compo, \bmeet, 0, \pi_i,\linebreak \D_i, \A_i, \F_i, \tie_i, \pref \}$. Then the class of $\sigma$-algebras that are representable by $n$-ary partial functions has equational theory in $\mathsf{coNP}$. If the signature contains $\A_i$ and either $\<\phantom{a}\> \compo$ or $\bmeet$ then the equational theory is $\mathsf{coNP}$-complete.
\end{theorem}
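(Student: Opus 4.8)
The plan is to prove the two halves separately.

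For the $\mathsf{coNP}$ upper bound, I would first observe that an equation is valid in the representation class if and only if it holds in every algebra of $n$-ary partial functions of signature $\sigma$, since the representation class is exactly the class of algebras isomorphic to such algebras. It therefore suffices to show that non-validity of an equation $s = t$ admits a witness of size polynomial in $n + |s| + |t|$ that is checkable in polynomial time. Suppose $s^{\algebra{A},v} \neq t^{\algebra{A},v}$ for some algebra $\algebra{A}$ of $n$-ary partial functions with base $X$ and assignment $v$; then these functions disagree at some $n$-tuple $\vec x \in X^n$. The value of a term at a tuple is computed by recursion over its parse tree, and the only operation that hands its subterms a \emph{new} tuple is composition, which does so only for its last argument; hence evaluating $s$ and $t$ at $\vec x$ involves at most $|s| + |t|$ distinct tuples over $X$, and the only points of $X$ that matter are the coordinates of $\vec x$ together with the value, at its associated tuple, of each subterm occurrence in $s$ and $t$ --- so $O(n + |s| + |t|)$ points in all. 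Collect these into a set $Y \subseteq X$, put $E' = E \cap Y^2$, and let $\algebra{B}$ be the subalgebra of $n$-ary partial functions on $Y$ with relation $E'$ generated by the restrictions $v(a) \cap Y^{n+1}$ of the variables in $s, t$. By the choice of $Y$ the recursive evaluation of $s$ and $t$ at $\vec x$ in $\algebra{B}$ makes the same definedness decisions and returns the same values as in $\algebra{A}$, so $s$ and $t$ still disagree at $\vec x$ in $\algebra{B}$. The witness is then $Y$ together with the polynomially many records ``variable $a$ takes value $z$, or is undefined, at tuple $\vec y$'' for the finitely many lookups the evaluation performs; a verifier re-runs the evaluation against these records in polynomial time and checks that $s$ and $t$ disagree at $\vec x$. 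This puts non-validity in $\mathsf{NP}$, hence validity in $\mathsf{coNP}$.

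For $\mathsf{coNP}$-hardness when $\A_i$ and either composition or $\bmeet$ lie in $\sigma$, I would reduce from the $\mathsf{coNP}$-complete propositional tautology problem. Fixing the index $1$, given a Boolean formula $\phi$ over $p_1, \ldots, p_k$ I introduce algebra variables $a_1, \ldots, a_k$ and a fresh $b$, set $0 \coloneqq \<\A_1^n\!b\> \compo b$ and $\pi_i \coloneqq \A_i\!0$, and define a conjunction combinator $\mathrm{Conj}(\alpha,\beta)$ equal to $\alpha \bmeet \beta$ if $\bmeet \in \sigma$, and to $\<\alpha, \pi_2, \ldots, \pi_n\> \compo \beta$ (or simply $\alpha \compo \beta$ when $n = 1$) if composition is in $\sigma$; for $\alpha, \beta$ restrictions of $\pi_1$ this evaluates to the restriction of $\pi_1$ to $\mathrm{dom}\,\alpha \cap \mathrm{dom}\,\beta$ and crucially duplicates neither argument, so it is size-preserving. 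I then translate $\phi$ to a $\sigma$-term $t_\phi$ by $p_j \mapsto \A_1\!a_j$, $\lnot\psi \mapsto \A_1\!t_\psi$, $\psi_1 \wedge \psi_2 \mapsto \mathrm{Conj}(t_{\psi_1}, t_{\psi_2})$ and $\psi_1 \vee \psi_2 \mapsto \A_1\!\mathrm{Conj}(\A_1\!t_{\psi_1}, \A_1\!t_{\psi_2})$; this is of size polynomial in $n + |\phi|$. A routine induction on $\phi$ shows that in every algebra of $n$-ary partial functions, under every assignment $v$, the function $t_\phi$ is the restriction of $\pi_1$ to $\{\vec x \in \mathrm{dom}\,\pi_1 : \nu_{\vec x} \models \phi\}$, where $\nu_{\vec x}(p_j) = 1$ iff $v(a_j)$ is undefined at $\vec x$. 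Hence $t_\phi = \pi_1$ holds throughout the representation class iff $\phi$ is a tautology: if it is, the indicated set is all of $\mathrm{dom}\,\pi_1$; if it is not, a falsifying valuation is realised in the two-element algebra of $n$-ary partial functions on a one-point base by assigning each $a_j$ the empty or the full function appropriately, making $t_\phi$ empty while $\pi_1$ is not. Together with the upper bound this yields $\mathsf{coNP}$-completeness for these signatures.

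The step I expect to require the most care is the witness construction for the upper bound: $Y$ must be closed not merely under the coordinates of the tuples appearing in the evaluation but also under the value of every subterm occurrence at its tuple, since otherwise restricting a variable's graph to $Y$ could turn a ``defined'' into an ``undefined'' and corrupt the behaviour of an enclosing antidomain; once this closure property holds, $\algebra{B}$ faithfully reproduces the evaluation and the rest is mechanical. On the hardness side the only subtlety is using the non-duplicating form of $\mathrm{Conj}$, which is what keeps $t_\phi$ polynomial in size when $n \geq 2$.
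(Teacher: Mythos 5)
Your proof is correct and follows the same two-part strategy as the paper's: a small-model argument for the $\mathsf{coNP}$ upper bound, in which the restricted base consists of the coordinates of the initial tuple together with the values of all subterm occurrences at their associated tuples (exactly the closure property the paper builds into its inductively defined set $Y(r,\boldsymbol{x})$, and you correctly identify it as the crux --- without it a restriction could turn ``defined'' into ``undefined'' and corrupt an enclosing antidomain); and a reduction from propositional tautology for hardness, simulating the Boolean algebra of $\A_1$-elements. The one genuine divergence is your conjunction gadget, and it is an improvement. The paper replaces $\wedge$ by the derived product $\bullet$ of \Cref{boolean}, whose expansion $\<\A_1^n\!a\>\compo\A_i\!b$ repeats its first argument $n$ times; iterated over left-deep conjunctions this multiplies the term size by a factor of about $n$ per nesting level, so for $n\geq 2$ the paper's reduction as literally stated needs either a formula-rebalancing step or a fix like yours to stay polynomial. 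Your $\mathrm{Conj}(\alpha,\beta)=\<\alpha,\pi_2,\ldots,\pi_n\>\compo\beta$ uses each argument exactly once and is manifestly size-preserving. Two minor points: your witness records only the lookups actually performed rather than whole restricted graphs, which is cleaner (and sidesteps the $|Y|^{n+1}$ issue if one does not regard $n$ as fixed); and your definition $0\coloneqq\<\A_1^n\!b\>\compo b$ presupposes composition, so for the signature $\{\A_i,\bmeet\}$ you should instead take $0\coloneqq\A_1\!b\bmeet\A_1\!\A_1\!b$ and $\pi_1\coloneqq\A_1\!0$ --- a one-line repair, and an omission the paper shares.
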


\begin{proof}
For the first part we will show that if an equation $s = t$ is not valid then it can be refuted on an algebra of $n$-ary partial functions with a base of size linear in the length of the equation. Then a nondeterministic Turing machine can easily identify invalid equations in polynomial time by nondeterministically choosing an assignment of the variables to $n$-ary partial functions and then calculating the interpretations of the two terms.

Suppose $s = t$ is not valid. Then there is some algebra $\algebra{F}$ of $n$-ary partial functions, some assignment $\boldsymbol{f}$ of elements of $\algebra{F}$ to the variables in $s = t$ and some $n$-tuple $\boldsymbol{x}$ in the base of $\algebra{F}$ such that $s[\boldsymbol{f}](\boldsymbol{x}) \neq t[\boldsymbol{f}](\boldsymbol{x})$, meaning that either both sides are defined and they have different values, or one side is defined and the other not. We will select a subset $Y$ of the base of $\algebra{F}$, of size linear in the length of the equation, such that in any algebra of $n$-ary functions with base $Y$ and containing the restrictions $\boldsymbol{f}|_Y$ of $\boldsymbol{f}$ to $Y \times Y$, we have $s[\boldsymbol{f}](\boldsymbol{x}) = s[\boldsymbol{f}|_Y](\boldsymbol{x})$ and $t[\boldsymbol{f}](\boldsymbol{x}) = t[\boldsymbol{f}|_Y](\boldsymbol{x})$ (or both sides are undefined). Then the equation is refuted in any such algebra, for example the algebra generated by the $\boldsymbol{f}|_Y$.

Define $Y(r, \boldsymbol{x})$ by structural induction on the term $r$ as follows.
\begin{itemize}
\item
For any variable $a$,\\ $Y(a, \boldsymbol{x}) \coloneqq 
\begin{cases}
\!\{x_1, \ldots, x_n\} \cup \{a[\boldsymbol{f}](\boldsymbol{x})\} &\text{ if } a[\boldsymbol{f}](\boldsymbol{x}) \text{ exists }\\
\! \{x_1, \ldots, x_n\} &\text{ otherwise }
\end{cases}$
\item
$Y(\boldsymbol{u} \compo v, \boldsymbol{x}) \coloneqq
\begin{cases}
\!Y(u_1, \boldsymbol{x}) \cup \ldots \cup Y(u_n, \boldsymbol{x}) \cup Y(v, (u_1[\boldsymbol{f}](\boldsymbol{x}), \ldots, u_n[\boldsymbol{f}](\boldsymbol{x}))) \\
\,\enspace\qquad\qquad\qquad\qquad\qquad\!\text{ if } u_1[\boldsymbol{f}](\boldsymbol{x}), \ldots, u_n[\boldsymbol{f}](\boldsymbol{x}) \text{ exist} \\
 \!\{x_1, \ldots, x_n\} \enspace\qquad\qquad\quad\!\text{ otherwise}
\end{cases}$
\item
$Y(0, \boldsymbol{x}) = Y(\pi_i, \boldsymbol{x}) \coloneqq \{x_1, \ldots, x_n\}$
\item
$Y(\D_i\!u, \boldsymbol{x}) = Y(\A_i\!u, \boldsymbol{x}) = Y(\F_i\!u, \boldsymbol{x}) \coloneqq Y(u, \boldsymbol{x})$
\item
$Y(u \bmeet v, \boldsymbol{x}) = Y(u \tie_i v, \boldsymbol{x}) = Y(u \pref v, \boldsymbol{x}) \coloneqq Y(u, \boldsymbol{x}) \cup Y(v, \boldsymbol{x})$
\end{itemize}
Then it follows by structural induction on terms that for any subset $Y$ of the base of $\algebra{F}$ that contains $Y(r, \boldsymbol{x})$, we have $r[\boldsymbol{f}](\boldsymbol{x}) = r[\boldsymbol{f}|_Y](\boldsymbol{x})$. Hence we may take $Y \coloneqq Y(s, \boldsymbol{x}) \cup Y(t, \boldsymbol{x})$, which is clearly of size linear in the length of $s = t$.

For the second part, we describe a polynomial time reduction from the $\mathsf{coNP}$-complete problem of deciding the tautologies of propositional logic, to the problem of deciding equational validity in the representation class. To do this, we may assume the propositional formulae are formed using only the connectives $\neg$ and $\wedge$. Then replace every propositional letter, $p$ say, in a given propositional formula, $\varphi$, with $\D_i\!p$ (for some fixed choice of $i$), every $\neg$ with $\A_i$ and every $\wedge$ with either the product $\bullet$ of \Cref{boolean} or with the operation $\bmeet$ of the algebra, depending on availability in the signature. Denoting the resulting term $\varphi^*$, output the equation $\varphi^* = \pi_i$. This reduction is correct since the $\A_i$-elements form a Boolean algebra and there are assignments where $\D_i\!p$ is the bottom element and where it is the top.
\end{proof}

Note that if we are interested in \emph{injective} $n$-ary partial functions then the argument in the proof of \Cref{thm:equational} can be used to give the analogous result for this case so long as preferential union is not in the signature. Since the preferential union of two injective functions is not necessarily injective, restricted functions do not necessarily generate an algebra of injective functions when preferential union is present in the signature, invalidating the argument.

\bibliographystyle{amsplain}

\bibliography{multiplace_functions}

\end{document}